\documentclass[11pt,reqno, a4paper]{amsart}
\usepackage{marginnote}
\usepackage{textcomp,  microtype, lmodern, mathrsfs,fbb}
\usepackage{todonotes}
\usepackage[english]{babel}
\usepackage{amsthm,amsmath}
\usepackage{amssymb,bbm,esint}
\numberwithin{equation}{section}
\usepackage[dvipsnames]{xcolor}
\usepackage{comment}

\usepackage{dutchcal}
\usepackage{bbm}
\usepackage{tikz}
\usetikzlibrary{decorations.pathreplacing}
\usepackage[T1]{fontenc}
\usepackage[utf8]{inputenc}
\usepackage{csquotes}
\usepackage{mathtools,apptools}
\usepackage{pdfpages}
\usepackage[top=.8in, bottom=.8in, left=1.1in, right=1.1in]{geometry}
\usepackage{fancyhdr}
\usepackage{nicefrac}
\usepackage{tikz-cd}
\usepackage{esint,dsfont}

\usepackage[backref=page]{hyperref}
\hypersetup{breaklinks=true,hypertexnames=false,colorlinks=true,urlcolor=ForestGreen,citecolor=ForestGreen,linkcolor=blue!70!green,bookmarksnumbered,bookmarksopen}
\usepackage{orcidlink}

\usepackage[capitalise]{cleveref}
\newtheorem{theorem}{Theorem}[section]
\newtheorem{definition}[theorem]{Definition}
\newtheorem{lemma}[theorem]{Lemma}
\newtheorem{remark}[theorem]{Remark}
\newcommand{\hn}{{\mathbb{H}^{N}}}

 \newcommand{\<}{\left\langle}
\renewcommand{\>}{\right\rangle}
\newcommand{\eps}{\varepsilon}
\newcommand{\dhn}[2]{|#2^{-1} \circ #1|_\hn}

\usepackage{longtable}
\title[Equivalence of weak and viscosity solutions]{Equivalence of Weak and Viscosity Solutions for Nonlocal p-Laplace Type Equations on the Heisenberg Group}
\author{Debangana Mukherjee and Vivek Tewary}
\address{School of Interwoven Arts and Sciences, Krea University, Sri City, India}
\email{debangana.mukherjee@krea.edu.in, vivek.tewary@krea.edu.in}
\begin{document}
	
	\begin{abstract}
    We establish the equivalence between weak and viscosity solutions for a broad class of nonlocal $p$-Laplace type equations on the Heisenberg group whose kernels satisfy standard symmetry, ellipticity, and left-translation invariance assumptions. As a particular case, our results apply to the fractional Heisenberg $p$-Laplacian. The proof combines intrinsic approximation by modified infimal convolutions, Heisenberg mollification, and a nonlocal integration-by-parts argument adapted to the sub-Riemannian geometry. The main analytical difficulty stems from the noncommutative structure of the Heisenberg group, which prevents a direct extension of the Euclidean theory and requires new localization and approximation techniques. 
	\end{abstract}

	\maketitle
	
	\section{Introduction}
Nonlocal nonlinear equations have become a central topic in the analysis of partial differential equations owing to their broad range of applications and their rich mathematical structure. Among these, fractional $p$-Laplace type equations constitute a fundamental class of nonlinear integro-differential equations, combining the features of degenerate elliptic operators with long-range interactions. Their study lies at the intersection of nonlinear potential theory, the calculus of variations, and viscosity solution theory, and has led to advances in regularity theory, comparison principles, existence and uniqueness of solutions, and the qualitative analysis of nonlinear nonlocal phenomena. 

Two complementary notions of solutions play a central role in the analysis of nonlinear nonlocal equations. Weak solutions arise naturally from the variational structure of the problem and are well suited to energy estimates and compactness methods. Viscosity solutions, on the other hand, provide a pointwise framework compatible with comparison principles and stability arguments. Establishing the equivalence between these notions allows one to transfer techniques between the variational and viscosity settings.

In the Euclidean setting, Korvenpää, Kuusi and Lindgren in \cite{Korven2019} established the equivalence of weak, viscosity and potential-theoretic solutions for a broad class of fractional $p$-Laplace type operators. More recently, Barrios and Medina in \cite{Medina2021} extended this theory to non-homogeneous equations, developing new approximation arguments based on infimal convolutions and weak compactness. The analysis of nonlocal equations has further expanded to encompass rough forcing terms and measure data. In this direction, Kuusi, Mingione and Sire \cite{Kuusi2015} established a nonlinear potential-theoretic approach for nonlocal equations with measure data. Further, Di Castro, Kuusi and Palatucci \cite{Kuusi2016} established nonlocal Harnack inequalities for fractional \(p\)-Laplace type equations. Brasco et al \cite{Brasco2023} proved higher Hölder regularity for solutions to the fractional \(p\)-Laplacian. Brasco, Lindgren and Parini \cite{Brasco2016} introduced and studied the fractional Cheeger problem. For works on the regularity theory of quasilinear nonlocal equations on the Heisenberg group, see \cite{ManfrediniPalatucciPiccininiPolidoro2023,Piccinini2022,PalatucciPiccinini2023}. 

The primary objective of this paper is to establish the equivalence between weak and viscosity solutions for fractional $p$-Laplace type equations on the Heisenberg group. Motivated by the Euclidean theory developed by Korvenpää, Kuusi and Lindgren \cite{Korven2019} and subsequently extended by Barrios and Medina \cite{Medina2021}, we investigate nonlocal operators whose prototype is
    \begin{align}
        \label{maineq}
                2\,\text{P.V.}\int_{\mathbb{H}^N}\frac{|u(\xi)-u(\eta)|^{p-2}(u(\xi)-u(\eta))}{|\eta^{-1}\circ\xi|^{Q+sp}}\,d\eta = f(\xi),
    \end{align}
in the sub-Riemannian setting of the Heisenberg group. The analysis developed in this paper applies to a general class of nonlocal \(p\)-Laplace type operators. More precisely, in Section \ref{sec:kernel}, we introduce a kernel \(K(\xi,\eta)\) satisfying suitable ellipticity, symmetry, left-translation invariance and continuity assumptions, and formulate the operator in this general setting. The fractional \(p\)-Laplacian corresponds to the particular choice
\[
K(\xi,\eta)=|\eta^{-1}\circ\xi|_{\mathbb H^N}^{-(Q+sp)}.
\]
One of the principal difficulties in extending the Euclidean theory to the Heisenberg group stems from the intrinsic sub-Riemannian geometry and the noncommutative structure of the underlying Lie group. Unlike the Euclidean setting, the natural notion of differentiability is given by the horizontal vector fields, while classical translations, Taylor expansions, and convolution arguments must be replaced by their counterparts. In particular, the analysis of the nonlocal operator requires the use of group translations, homogeneous dilations, the Heisenberg gauge, and modified infimal convolutions. These analytical and geometric obstacles prevent a straightforward adaptation of the Euclidean proofs and require several new technical ingredients that are developed in this paper. Although the theory of local subelliptic equations on the Heisenberg group is now well established, thanks to the foundational works in \cite{Folland1975,FollandStein1982,Bonfig2007,Capogna2007,Bieske2002,Bieske2006,OchoaRuiz2019}, the corresponding theory for nonlinear nonlocal equations remains comparatively less developed.
\subsection*{Main contributions and novelties}
The main contributions of the present paper are summarized as follows.
\begin{itemize}
\item We establish the equivalence between weak and viscosity solutions for a general class of nonlocal \(p\)-Laplace type equations on the Heisenberg group. To the best of our knowledge, this is the first such equivalence result in the sub-Riemannian nonlocal setting.
\item We introduce a modified infimal convolution adapted to the noncommutative structure of the Heisenberg group. Unlike the classical Euclidean infimal convolution, the regularization is constructed using the intrinsic group operation, preserving the left-translation invariance of the equation.
\item We establish a principal value formula for affine functions in the Heisenberg group, which plays a fundamental role in the viscosity framework and allows the nonlocal operator to be interpreted pointwise.
\item We develop an approximation procedure that allows us to pass from viscosity supersolutions to weak supersolutions. The proof combines the regularization by infimal convolution, uniform local energy estimates, and compactness arguments adapted to the Heisenberg group.
\end{itemize}

For the convenience of the reader, we briefly recall the basic geometric and analytic framework of the Heisenberg group that underlies the analysis developed in this paper. Unless otherwise stated, the material in this subsection follows the standard references \cite{Bonfig2007,FollandStein1982}.
    
    \subsection{Preliminaries on the Heisenberg Group} 
    The Heisenberg group is the set $\mathbb{R}^{2N+1}$ equipped with the following group multiplication for $$\xi=(z,t)=(x,y,t)=(x_1, x_2, \ldots, x_N, y_1, y_2,\ldots, y_N, t)$$ and $$\xi':=(z',t')=(x',y',t')=({x'}_1, {x'}_2, \ldots, {x'}_N,{y'}_1, {y'}_2,\ldots, {y'}_N, t'),$$ 
	\begin{align*}
		\xi\circ\xi' = (x+x', y+y', t+t'+2\langle x',y\rangle-2\langle x,y'\rangle).
	\end{align*}
	One defines a one-parameter group of automorphisms on $\hn$ as $\Phi_\lambda(x,y,t)=(\lambda x,\lambda y,\lambda^2 t)$ so that it is said to have a \emph{homogeneous dimension} of $Q=2N+2$. 
    A \emph{homogeneous norm} $d_0:\hn\to[0,\infty]$ is a function satisfying
	\begin{enumerate}
		\item $d_0(\Phi_\lambda(\xi))=\lambda d_0(\xi)$ for any $\xi\in\hn$ and $\lambda>0$.
		\item $d_0(\xi)=0$ if and only if $\xi=0$.
	\end{enumerate}
	One defines the standard \emph{homogeneous norm} on $\hn$ by 
	\[
	|\xi|_\hn = (|z|^4+|t|^2)^{\tfrac{1}{4}}.
	\] It is well known that all homogeneous norms on $\hn$ are equivalent (see \cite[Corollary 5.1.5]{Bonfig2007}). For a center $\xi_0\in\hn$ and radius $R>0$, we define the ball $B(\xi_0,R):=\{\xi\in\hn: |\xi_0^{-1}\circ\xi|<R\}$. 
	Moreover, Heisenberg group with any homogeneous norm $d_0$ satisfies a pseudo-triangle inequality in the form of the lemma below:
	\begin{lemma} \cite[Proposition 5.1.7]{Bonfig2007}
		Let $d_0$ be a homogeneous norm on $\hn$. Then there exists a constant $c>0$ such that for all $\xi,\eta\in\hn$, it holds that
		\begin{enumerate}
			\item $d_0(\xi\circ\eta)\leq c(d_0(\xi)+d_0(\eta))$,
			\item $d_0(\xi\circ\eta)\geq\frac1c d_0(\xi)-d_0(\eta^{-1})$,
			\item $d_0(\xi\circ\eta)\geq\frac1c d_0(\xi)-c d_0(\eta)$.
		\end{enumerate}
	\end{lemma}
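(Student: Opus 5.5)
The plan is to reduce everything to a compactness argument on the unit sphere of $d_0$, using only continuity, homogeneity and positivity of $d_0$ away from the origin. We also use that inversion $\xi\mapsto\xi^{-1}=(-x,-y,-t)$ commutes with the dilations $\Phi_\lambda$. We take $d_0$ to be continuous, as is standard for homogeneous norms (cf.\ \cite{Bonfig2007}).

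\textbf{Step 1: comparability of $d_0(\xi)$ and $d_0(\xi^{-1})$.} Set $S=\{\xi: |\xi|_\hn=1\}$. This set is compact and does not contain $0$, so $d_0$ and $\xi\mapsto d_0(\xi^{-1})$ are continuous and strictly positive on $S$. Hence there is $c_1\ge1$ with $d_0(\xi^{-1})\le c_1 d_0(\xi)$ on $S$. For general $\xi\neq0$, write $\xi=\Phi_\lambda(\sigma)$ with $\lambda=|\xi|_\hn$ and $\sigma\in S$. Since $\xi^{-1}=\Phi_\lambda(\sigma^{-1})$, homogeneity extends the bound to all $\xi$.

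\textbf{Step 2: part (1).} Consider the function
\[
F(\xi,\eta)=\frac{d_0(\xi\circ\eta)}{d_0(\xi)+d_0(\eta)}
\]
on the compact set $\{(\xi,\eta): d_0(\xi)+d_0(\eta)=1\}$. This set is compact because $d_0$ is comparable to $|\cdot|_\hn$; that comparability follows by the same sphere argument as in Step 1. The function $F$ is continuous there, since the group law is polynomial, so $F$ is bounded by some $c_2$.

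For arbitrary $(\xi,\eta)\neq(0,0)$, put $\lambda=d_0(\xi)+d_0(\eta)$ and rescale both points by $\Phi_{1/\lambda}$. Because $\Phi_\lambda$ is a group automorphism, $\Phi_\lambda(\xi)\circ\Phi_\lambda(\eta)=\Phi_\lambda(\xi\circ\eta)$. The ratio $F$ is therefore dilation invariant, and (1) holds with $c_2$.

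\textbf{Step 3: parts (2) and (3).} Write $\xi=(\xi\circ\eta)\circ\eta^{-1}$ and apply (1):
\[
d_0(\xi)\le c_2\bigl(d_0(\xi\circ\eta)+d_0(\eta^{-1})\bigr).
\]
This gives
\[
d_0(\xi\circ\eta)\ge \tfrac1{c_2}d_0(\xi)-d_0(\eta^{-1}),
\]
which is (2). Combining with Step 1, $d_0(\eta^{-1})\le c_1 d_0(\eta)$, yields (3) with constant $\max(c_1,c_2)$. Finally take $c=\max(c_1,c_2)\ge1$; enlarging $c$ only weakens each inequality, so all three hold simultaneously.

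The only delicate point is Step 2. Compactness of the normalized set and dilation invariance of the ratio both rest on $\Phi_\lambda$ being an automorphism of $\hn$. That property is checked directly from the group law: the term $2\langle x',y\rangle-2\langle x,y'\rangle$ scales by $\lambda^2$, consistent with the $t$-component.
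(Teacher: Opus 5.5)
Your proof is correct and is essentially the standard argument from \cite{Bonfig2007}, which the paper cites without reproducing: for (1) you bound $d_0(\xi\circ\eta)$ on the compact set $\{d_0(\xi)+d_0(\eta)=1\}$ and extend by dilation, for (2) you write $\xi=(\xi\circ\eta)\circ\eta^{-1}$, and for (3) you use $d_0(\eta^{-1})\le c\,d_0(\eta)$, obtained by compactness of the unit sphere. You were right to assume continuity of $d_0$ explicitly: it is part of the definition in \cite{Bonfig2007} but is missing from the paper's definition of a homogeneous norm, and without it the inequalities can fail.
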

	\begin{remark}
		It is known that for the standard homogeneous norm on $\hn$, the constant $c$ above is $1$ so we may assume that $\hn$ is a metric space with metric given by $|\cdot^{-1}\circ\cdot|$ (See \cite[Example 5.1]{BFS18}).
	\end{remark}

    An equivalent homogeneous norm may be defined as in \cite{BCM96}. For any $\xi\in\hn\setminus\{0\}$, define $\|\xi\|_\hn=\rho$ where $\rho$ is the unique solution of the equation
    \begin{align*}
        \frac{x_1^2}{\rho^2}+\cdots+\frac{x_N^2}{\rho^2}+\frac{y_1^2}{\rho^2}+\cdots+\frac{y_N^2}{\rho^2}+\frac{t^2}{\rho^4}=1.
    \end{align*}
    Moreover, define ``polar-type" coordinates
    \begin{align*}
        \begin{cases}
            x_1&=\rho\cos\psi_1\ldots\cos\psi_{2N-1}\cos\psi_{2N},\\
            x_2&=\rho\cos\psi_1\ldots\cos\psi_{2N-1}\sin\psi_{2N},\\
            \vdots\\
            x_N&=\rho\cos\psi_1\cos\psi_2\cdots\cos\psi_{N-1}\cos\psi_{N}\sin\psi_{N+1},\\
            y_1&=\rho\cos\psi_1\cos\psi_2\cdots\cos\psi_{N-1}\sin\psi_{N},\\
            \vdots\\
            y_N&=\rho\cos\psi_1\sin\psi_2,\\
            t&=\rho^2\sin\psi_1,
        \end{cases}
    \end{align*} with \begin{align}\label{eq:polar}d\xi = \rho^{Q-1} J(\psi_1,\ldots,\psi_{2N})d\rho\,d\psi_1\cdots d\psi_{2N}=\rho^{Q+1}d\rho\, d\sigma.\end{align}
    
	We also note that the Haar measure on $\hn$ is the same as the Lebesgue measure on $\mathbb{R}^{2N+1}$ and it satisfies the measure doubling property, viz.,
	\begin{align*}
		\left|B(\xi_0,2R)\right|\leq C \left|B(\xi_0,R)\right|
	\end{align*} for any $\xi_0\in\hn$ and $R>0$. These properties make $(\hn,|\cdot^{-1}\circ\cdot|,\mathcal{L}^{2N+1})$ into a $Q$-regular measure metric space with a doubling measure. These results may be found in \cite[Proposition 14.2.9]{Shanmugalingam2015} and \cite[Section 11.3]{Koskela2000}.

    Now, we turn to the existence of Taylor series for smooth functions on the Heisenberg group. The Jacobian basis of the Heisenberg Lie algebra $\mathfrak{h}^N$ of $\hn$ is given by $X_j=\partial_{x_j} + 2y_j\partial_t$ for $1\leq j\leq N$, $X_{N+j} = \partial_{y_j} - 2x_j\partial_t$ for $1\leq j\leq N$, and $X_{2N+1}=T=\partial_t$.

    \begin{definition}(Maclaurin Polynomial~\cite[Def. 20.2.3]{Bonfig2007})
        Let $u\in C^\infty(\hn,\mathbb{R})$. Then for any $n\in \mathbb{N}\cup\{0\}$, there exists a unique polynomial $p$, $\Phi_\lambda$-homogeneous of degree at most $m$ such that 
        \begin{align*}
            (X_1\dots X_{2n}T)^\beta p(0) = (X_1\dots X_{2n}T)^\beta u(0)
        \end{align*} for all multiindices $\beta = (\beta_1,\ldots,\beta_{2n},\beta_{2n+1})$ with $|\beta| = \beta_1+\cdots+\beta_{2n}+\beta_{2n+1} \leq m$.
    \end{definition}

    We say that $P := P_m(u,0)(\xi)$ is the Maclaurin polynomial of $\Phi_\lambda$-degree $m$ associated to $u$. In the case of the Heisenberg group, one can explicitly write the Maclaurin polynomial of $\Phi_\lambda$-degree \(2\). We have
    \begin{align}
        P_2(u,0)(z, t)=u(0)+\langle \nabla_{\mathbb{H}^N} u(0), z\rangle+\partial_t u(0)\, t+\frac12 \langle z, D^{2,*}_{\mathbb{H}^N} u(0)\, z\rangle .
    \end{align}
    Here the subgradient $\nabla_{\mathbb{H}^N}u$ is given by
    \begin{align*}
    \nabla_{\mathbb{H}^N} u(\xi):=\left( X_1 u(\xi), X_2 u(\xi),\dots, X_{2n} u(\xi) \right),
    \end{align*} and \(D^{2,*}_{\mathbb{H}^N} u\) is the symmetrized horizontal Hessian matrix, i.e.,
    \begin{align*}
        D^{2,*}_{\mathbb{H}^N} u(\xi):=\left(\frac12\big( X_i X_j u(\xi) + X_j X_i u(\xi) \big)\right)_{i,j=1,\dots,2N}.
    \end{align*}

    We will use the notation $\nabla^{2,*}_\hn u(\xi)=\left(\frac12\big( X_i X_j u(\xi) + X_j X_i u(\xi) \big)\right)_{i,j=1,\dots,2N+1}$ to denote the full Hessian.

    \begin{definition}
        Let $u \in C^{m}( \mathbb{H}^N , \mathbb{R}),\; \xi_0 \in \mathbb{H}^N$ and $m \in \mathbb{N}\cup\{0\}$. We consider the Maclaurin polynomial \(P_m(u(\xi_0 \circ \cdot),0)\) of the map 
        \begin{align*}
            &\xi \longmapsto u(\xi_0 \circ \xi),\\
            &\mathbb{H}^N \to \mathbb{R}.
        \end{align*}
    The polynomial
    \begin{align*}
        P_m(u,\xi_0)(\xi) := P_m\big(u(\xi_0 \circ \cdot),0\big)\big(\xi_0^{-1}\circ \xi\big)
    \end{align*} is the Taylor polynomial of $\Phi_\lambda$-degree \(m\) centered at \(\xi_0\) associated to \(u\).
    \end{definition}
The following intrinsic Taylor theorem will be the main analytical tool in the proof of the equivalence between weak and viscosity solutions, allowing us to control the singular behaviour of the nonlocal operator near the diagonal.
 \begin{theorem}\label{thm:taylor} (\cite[Corollary 20.3.5]{Bonfig2007})
        Let \(m \in \mathbb{N}\cup\{0\}\), \(\xi_0 \in \mathbb{H}^N\).  Let \(u \in C^{m+1}(\mathbb{H}^N,\mathbb{R})\). Then it holds that 
        \begin{align*}
          u(\xi)=P_m(u,\xi_0)(\xi)+O\!\left(|\xi_0^{-1}\circ\xi|_{\mathbb H^N}^{\,m+1}\right).
        \end{align*}
    \end{theorem}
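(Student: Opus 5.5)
To prove Theorem~\ref{thm:taylor}, I will first use left translation to reduce to the case $\xi_0=0$, and then run an intrinsic Taylor argument along horizontal and vertical curves. Set $v(\eta):=u(\xi_0\circ\eta)$. Left translation commutes with the left-invariant fields $X_j$ and $T$, so $v\in C^{m+1}$ and its derivatives $(X,T)^\beta v(0)$ coincide with those of $u$ at $\xi_0$. By the definition of $P_m(u,\xi_0)$, with $\eta=\xi_0^{-1}\circ\xi$,
\[
u(\xi)-P_m(u,\xi_0)(\xi)=v(\eta)-P_m(v,0)(\eta).
\]
Hence it suffices to show
\[
|v(\eta)-P_m(v,0)(\eta)|\le C|\eta|_\hn^{m+1}
\]
for $\eta$ in a fixed neighbourhood of $0$. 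The $O(\cdot)$ is understood locally, with a constant depending on bounds of the derivatives of $u$ of order up to $m+1$ on a compact set.

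Next I reduce to functions vanishing to high order. Put $w:=v-P_m(v,0)$. Then $w\in C^{m+1}$ and $(X,T)^\beta w(0)=0$ for every multi-index $\beta$ with homogeneous degree $\le m$, where $X_j$ counts $1$ and $T$ counts $2$. Here I use that the Maclaurin polynomial is characterised by matching exactly these derivatives. The degree-$(m+1)$ derivatives of $w$ are bounded near $0$. This holds because $P_m$ has homogeneous degree $\le m$, so its derivatives of degree $m+1$ vanish, and $u\in C^{m+1}$.

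To get the estimate, I connect $0$ to $\eta$ by a path built from a bounded number $M$ of segments. Each segment is a horizontal line $s\mapsto \zeta\circ\exp(sX_j)$ or a vertical line $s\mapsto\zeta\circ\exp(sT)$, and the parameter lengths are $\lesssim|\eta|_\hn$ for horizontal pieces and $\lesssim|\eta|_\hn^2$ for vertical ones. This is possible by homogeneity: such a representation holds on the unit sphere by compactness and then scales with $\Phi_\lambda$. I then perform an induction on $m$. Along a horizontal segment, the one-variable Taylor formula expresses the increment of $w$ through $X_jw$, which is a function vanishing at $0$ to homogeneous order $m-1$. Along a vertical segment it expresses the increment through $Tw$, which vanishes to order $m-2$. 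The inductive hypothesis bounds these as $|X_jw(\zeta)|\lesssim|\zeta|_\hn^{m}$ and $|Tw(\zeta)|\lesssim|\zeta|_\hn^{m-1}$, for intermediate points $\zeta$ with $|\zeta|_\hn\lesssim|\eta|_\hn$; the latter comparison follows from the pseudo-triangle inequality. Multiplying by the segment lengths gives $|\eta|_\hn^{m+1}$ in both cases. The base cases are $m=0$, where $w(\eta)=w(\eta)-w(0)$ is controlled by the mean value theorem using bounded first derivatives, and $m=-1$, which is trivial.

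I expect the main obstacle to be the induction's bookkeeping. One point is verifying that $X_jw$ and $Tw$ really satisfy the vanishing hypotheses at the lower order. This requires checking that their derivatives of degree $\le m-1$ (respectively $\le m-2$) are exactly derivatives of $w$ of degree $\le m$, which holds by regrouping the ordered products of $X_j$ and $T$. A second point is ensuring that the constants are uniform on compact sets. The first piece is formal, while the path construction is the geometric input, i.e.\ Chow connectivity in a quantitative, dilation-invariant form.
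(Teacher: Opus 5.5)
Your proof is correct. Note that the paper gives no argument of its own here; it imports the result from Bonfiglioli--Lanconelli--Uguzzoni. There (following Folland--Stein) it comes from the stratified Taylor inequality, which the paper itself invokes elsewhere. In that argument the origin is joined to $\eta$ by a bounded number of \emph{horizontal} segments only. The remainder is then bounded by $C|\eta|^{m+1}$ times the supremum of the horizontal derivatives $X_{i_1}\cdots X_{i_{m+1}}u$ over a ball of radius comparable to $|\eta|$.

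Your version, which also allows vertical segments $\exp(sT)$, is more elementary in $\mathbb{H}^N$. The decomposition $\eta=\exp(x_1X_1)\circ\cdots\circ\exp(y_NX_{2N})\circ\exp(t'T)$ is explicit, with $t'=t+2\langle x,y\rangle$ and hence $|t'|\le |t|+|z|^2\le 2|\eta|_{\mathbb{H}^N}^2$. So no compactness or Chow-type argument is needed.

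The price is that your recursion consumes $T$-derivatives. When the base case $k=0$ is reached for $g=(X,T)^\beta w$ of homogeneous degree $m$, you need $Tg$ bounded, and $Tg$ has homogeneous degree $m+2$. This is harmless: such a derivative contains a $T$, so its Euclidean order is at most $m+1$ and $u\in C^{m+1}$ controls it. It does mean the inductive statement should be phrased in terms of Euclidean $C^{k+1}$ regularity, as your use of the Euclidean mean value theorem effectively does. Boundedness of the homogeneous degree-$(k+1)$ derivatives alone, which your regularity remark suggests, is not enough.

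The stratified route buys a sharper remainder that depends only on horizontal derivatives, and so remains valid under purely horizontal regularity. Yours gives a short, self-contained proof that is fully adequate for the local $O(\cdot)$ statement used in the paper.
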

    \subsection{Nonlocal operators and structural assumptions} \label{sec:kernel}
    Let $K:\hn\times\hn\setminus\{0,0\}\to (0,\infty)$ be a function satisfying
    \begin{align}\label{eq:Kernel}
        \frac{\lambda}{\dhn{\xi}{\eta}^{Q+sp}}\leq K(\xi,\eta)\leq \frac{\Lambda}{\dhn{\xi}{\eta}^{Q+sp}} \mbox{ for } (\xi,\eta)\neq (0,0),
    \end{align} where $0<\lambda\leq \Lambda$, $p>1$ and $s\in (0,1)$.
    We further assume
    \begin{enumerate}
        \item Symmetry: $K(\xi,\eta)=K(\eta,\xi)$ for all $\xi,\eta\in\hn$.
        \item Left-translation invariance: $$K(\xi_0\circ\xi,\xi_0\circ\eta)=K(\xi,\eta) \mbox{ for all } \xi_0,\xi,\eta\in\hn.$$
        \item Continuity: The map $\xi\mapsto K(\xi,\eta)$ is continuous in $\hn\setminus\{0\}$.
    \end{enumerate}

    These properties hold for the kernel $K(\xi,\eta)=\dhn{\xi}{\eta}^{-Q-sp}$.
    
    The operator of our interest is
    \begin{align}\label{eq:operator}
        \mathcal{L}u(\xi):=2 \text{P.V.}\int_{\mathbb{H}^N}|u(\xi)-u(\eta)|^{p-2}(u(\xi)-u(\eta))K(\xi,\eta)\,d\eta
    \end{align}
   \begin{remark} Throughout this paper, we work with the nonlocal operator \eqref{eq:operator}
where the kernel \(K\) satisfies the assumptions in (\ref{eq:Kernel}). The fractional \(p\)-Laplacian on the Heisenberg group is recovered as the particular choice
\[
K(\xi,\eta)=|\eta^{-1}\circ\xi|_{\mathbb H^N}^{-(Q+sp)}.
\]
Therefore, all the results established in this paper for the general class of nonlocal \(p\)-Laplace type operators apply, in particular, to the fractional \(p\)-Laplacian on the Heisenberg group.
\end{remark}

    We define the fractional Sobolev space $W^{s,p}(\Omega)$ as
	\begin{align*}
		W^{s,p}(\Omega):=\left\{u\in L^p(\Omega):\int_\Omega\int_\Omega\frac{|u(\xi)-u(\eta)|^p}{|\eta^{-1}\circ\xi|_\hn^{Q+sp}}\,d\xi\,d\eta<\infty\right\}.
	\end{align*}
    We denote by $W_0^{s,p}(\Omega)$ the closure of $C^\infty_0(\Omega)$ in $W^{s,p}(\hn)$. Now we define a tail space $L^m_\gamma(\hn)$ on Heisenberg Group as
	\begin{align*}
		L^m_\gamma(\hn):=\Bigg\{v\in L^m_{\textup{loc}}(\hn):\int_\hn\frac{|v|^m}{1+|\eta|_\hn^{Q+\gamma}}d\eta<+\infty\Bigg\},m>0,\gamma>0.
	\end{align*}
    
    The tail space is defined in consideration of the tail of the function $u$ which is defined to be
    \begin{align}
        \textup{Tail}(f;\xi,r):=\left( r^{sp} \int_{\hn\setminus B_r(\xi)} \frac{|f(\eta)|^{p-1}}{|\xi^{-1}\circ\eta|_\hn^{Q+ps}} \,d\eta\right)^{\frac{1}{p-1}}, r>0.
    \end{align}

    \begin{definition}
        Let $f\in L^p_{\text{loc}}(\Omega)$. A function $u\in W^{s,p}_{\text{loc}}(\Omega)\cap L^{p-1}_{sp}(\hn)$ is said to be a weak supersolution (subsolution) of \eqref{maineq} if
        \begin{equation}
        \begin{aligned}
            \iint_{\hn\times\hn} |u(\xi)-u(\eta)|^{p-2}(u(\xi)-u(\eta))(\phi(\xi)-\phi(\eta))K(\xi,\eta)\,d\eta\,d\xi\\
            \geq (\leq) \int_\hn f(\xi)\phi(\xi)\,d\xi
        \end{aligned}
        \end{equation} for all non-negative $\phi\in C_0^\infty(\hn)$.
    \end{definition}

    In order to define the notion of viscosity solution for exponents in the range $p\leq\frac{2}{2-s}$, we need a more restricted class of comparison functions. Given an open set $D$, we denote by $C^2_{\beta}(D)$, a subset of $C^2(D)$, defined as
    \begin{align}
        C^2_{\beta}(D) = \left\{\phi\in C^2(D) : \sup_{\xi\in D}\left[\frac{\min\{d_\phi(\xi),1\}^{\beta-1}}{|\nabla_\hn\phi(\xi)|}+\frac{|D^2_{\hn}\phi(\xi)|}{d_\phi(\xi)^{\beta-2}}\right] <\infty\right\},
    \end{align} where $d_\phi(\xi) = \text{dist}(\xi, N_\phi)$ and $N_\phi=\{\xi\in D:\nabla_{\hn}\phi(\xi)=0\}$. We denote the supremum in this definition by $\|\cdot\|_{C^2_\beta(D)}$. The following definition is adapted from \cite{Korven2019}.

    \begin{definition}\label[definition]{def-visc}
        A function $u:\hn\to[-\infty,\infty]$ is a viscosity supersolution (subsolution) to $\mathcal{L}u=f$ in $\Omega$ if it satisfies the following:
        \begin{enumerate}
        \item $u<+\infty (u > -\infty)$ a.e. in $\hn$, $u>-\infty (u<+\infty)$ a.e. in $\Omega$.
            \item The function $u$ is lower(upper) semicontinuous in $\Omega$.
            \item If $\phi\in C^2(B_r(\xi_0))\cap L^{p-1}_{sp}(\hn)$ for some $B_r(\xi_0)\subset\Omega$ such that $\phi(\xi_0)=u(\xi_0)$, $\phi\leq u$ $(\phi\geq u)$ in $\hn$ and one of the following conditions hold:
            \begin{enumerate}
                \item $p>\frac{2}{2-s}$ or $\nabla_{\hn}\phi(\xi_0)\neq 0$,
                \item $1<p\leq \frac{2}{2-s}$, $\nabla_{\hn}\phi(\xi_0)=0$ is such that $\xi_0$ is an isolated critical point of $\phi$, and $\phi\in C^2_{\beta}(B_r(\xi_0))$ for some $\beta>\frac{sp}{p-1}$,
            \end{enumerate} then we have $\mathcal{L}\phi_r(\xi_0)\geq (\leq) f(\xi_0)$ where
            \begin{align*}                    
            \phi_r(\xi)=
               \begin{cases}
               \phi(\xi) &\mbox{ for }\xi\in B_r(\xi_0),\\
               u(\xi)&\mbox{ otherwise.}
                \end{cases}
            \end{align*}
            \item $u_-$ (respectively $u_+)\in L^{p-1}_{sp}(\hn)$.
        \end{enumerate}
    \end{definition}
    
    \subsection{Main Theorems}
    We will prove the following two theorems.

    \begin{theorem}
        \label{maintheorem1} Let $1<p<\infty$. Assume that $f\in C(\Omega)$. Let $u\in W^{s,p}_{\text{loc}}(\Omega)\cap L^{p-1}_{sp}(\hn)$ be  a weak subsolution to $\mathcal{L}u=f$ in $\Omega$. Furthermore, assume that $u$ is upper-semicontinuous in $\Omega$. Then $u$ is a viscosity subsolution to $\mathcal{L}u=f$ in $\Omega$. The analogous conclusions hold for weak supersolution and solution.
    \end{theorem}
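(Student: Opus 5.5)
We argue by contradiction, following the Euclidean strategy of Korvenpää–Kuusi–Lindgren. Conditions (1), (2) and (4) of \cref{def-visc} are immediate. Indeed, $u\in W^{s,p}_{\text{loc}}(\Omega)\cap L^{p-1}_{sp}(\hn)$ is finite a.e., upper semicontinuity is assumed, and $u_+\in L^{p-1}_{sp}(\hn)$ because $u$ itself lies in this space.

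For (3), let $\phi$ touch $u$ from above at $\xi_0$ on $B_r(\xi_0)\subset\Omega$, with $\phi$ admissible in the sense of (a) or (b). Suppose, for contradiction, that $\mathcal{L}\phi_r(\xi_0)>f(\xi_0)$.

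\textbf{Step 1: pointwise evaluation and continuity of $\xi\mapsto\mathcal{L}\phi_r(\xi)$ near $\xi_0$.} We use the intrinsic Taylor expansion of \cref{thm:taylor} with $m=1$ or $m=2$ centred at $\xi$. It gives $|\phi(\xi)-\phi(\eta)|\lesssim |\nabla_\hn\phi(\xi)|\,|\xi^{-1}\circ\eta|_\hn+|\xi^{-1}\circ\eta|^2_\hn$ near the diagonal; the vertical derivative $\partial_t\phi$ enters only through the $t$-coordinate, which is of order $|\xi^{-1}\circ\eta|^2_\hn$. We split into two cases.
\begin{itemize}
\item If $p\ge 2$, or $p>\frac{2}{2-s}$ with $\nabla_\hn\phi(\xi_0)\neq0$, the singular integral converges absolutely or in principal value. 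For the principal value we use the odd symmetry of the horizontal linear part of the Taylor polynomial under $\eta\mapsto \xi\circ(\xi^{-1}\circ\eta)^{-1}$. This reflection preserves the gauge, and by left-invariance and symmetry it preserves $K$.
\item In case (b), the $C^2_\beta$ bound gives $|\nabla_\hn\phi(\xi)|\lesssim d_\phi(\xi)^{\beta-1}$ and $|D^2\phi|\lesssim d_\phi^{\beta-2}$. With $\beta>\frac{sp}{p-1}$, the integral near the diagonal is estimated by $\int \rho^{(\beta-1)(p-1)}\rho^{-Q-sp}\rho^{Q-1}\,d\rho<\infty$, using the polar coordinates \eqref{eq:polar}. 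This bound is uniform for $\xi$ near $\xi_0$.
\end{itemize}
Dominated convergence, together with the continuity of $K$ and the tail bound $u\in L^{p-1}_{sp}$, then shows that $\xi\mapsto\mathcal{L}\phi_r(\xi)$ is continuous at $\xi_0$. Since $f$ is continuous, there are $\rho\in(0,r)$ and $\theta>0$ such that $\mathcal{L}\phi_\rho(\xi)\ge f(\xi)+\theta$ on $B_\rho(\xi_0)$. We also use the observation that shrinking $r$ only changes $\phi_r$ where $\phi\ge u$, which makes $\mathcal{L}\phi_r(\xi_0)$ larger, so shrinking is harmless.

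\textbf{Step 2: $\phi_\rho$ is a weak supersolution.} We need
\[
\iint|\phi_\rho(\xi)-\phi_\rho(\eta)|^{p-2}(\phi_\rho(\xi)-\phi_\rho(\eta))(\psi(\xi)-\psi(\eta))K\ge\int(f+\theta)\psi
\]
for nonnegative $\psi\in C_0^\infty(B_{\rho/2}(\xi_0))$. This is a nonlocal integration by parts. We truncate the kernel to $\{|\xi^{-1}\circ\eta|_\hn>\varepsilon\}$ and use the symmetry of $K$ to write the double integral as $\int \psi(\xi)\,\mathcal{L}_\varepsilon\phi_\rho(\xi)\,d\xi$. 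Then we pass to the limit $\varepsilon\to0$ using the uniform bounds from Step 1.

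\textbf{Step 3: comparison.} We perturb the test function to $\tilde\phi=\phi_\rho-\delta$ inside $B_{\rho/2}(\xi_0)$ and keep $\tilde\phi=u$ outside. This changes $\mathcal{L}$ by a small amount for small $\delta$, so $\tilde\phi$ is still a weak supersolution of $\mathcal{L}v\ge f+\theta/2$ there. We then apply the weak comparison principle on $B_{\rho/2}(\xi_0)$ to the weak subsolution $u$ and this supersolution. It is proved in the standard way by testing with $(u-\tilde\phi)_+$ and using strict monotonicity of $t\mapsto|t|^{p-2}t$. We get $u\le\tilde\phi$ in $B_{\rho/2}(\xi_0)$, but $u(\xi_0)=\phi(\xi_0)>\tilde\phi(\xi_0)$, a contradiction.

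The main obstacle is Step 1 in the singular range $p\le\frac{2}{2-s}$. There we need a uniform, locally integrable bound on $|\phi(\xi)-\phi(\eta)|^{p-1}K$ for $\xi$ close to the critical point. In the Heisenberg setting this bound must control the $t$-direction via the gauge and handle the non-commutative reflection; we intend to do both with \cref{thm:taylor} and polar coordinates. The cases of supersolutions and solutions follow symmetrically.
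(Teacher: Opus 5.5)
Your architecture is the paper's: contradiction, continuity of $\mathcal{L}\phi_r$ near $\xi_0$ as in \cref{lem-cont1}, pointwise-to-weak by truncating and symmetrizing as in \cref{lem-cont2}, a perturbation, then comparison. However, the perturbation in Step 3 fails as written.

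\textbf{Step 3.} The function $\tilde\phi=\phi_\rho-\delta\mathbbm{1}_{B_{\rho/2}(\xi_0)}$ jumps down by $\delta$ across $\partial B_{\rho/2}(\xi_0)$, and the nonlocal operator sees this jump with a non-integrable weight. With $g(t)=|t|^{p-2}t$, for $\xi\in B_{\rho/2}(\xi_0)$,
\[
\mathcal{L}\tilde\phi(\xi)-\mathcal{L}\phi_\rho(\xi)=2\int_{\hn\setminus B_{\rho/2}(\xi_0)}\bigl[g\bigl(\phi(\xi)-\phi_\rho(\eta)-\delta\bigr)-g\bigl(\phi(\xi)-\phi_\rho(\eta)\bigr)\bigr]K(\xi,\eta)\,d\eta .
\]
The integrand is everywhere negative. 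By \cref{lem:algebraic}, it is at most $-c\,\delta^{p-1}|\eta^{-1}\circ\xi|_\hn^{-Q-sp}$ wherever $|\phi(\xi)-\phi(\eta)|\le\delta/2$, in particular for $\eta$ near $\xi$. So the difference behaves like $-\delta^{p-1}d^{-sp}$, where $d$ is the distance from $\xi$ to the sphere, while $\mathcal{L}\phi_\rho$ stays bounded there. Hence $\mathcal{L}\tilde\phi\to-\infty$ at $\partial B_{\rho/2}(\xi_0)$, and for every $\delta>0$ the function $\tilde\phi$ fails to be a weak supersolution of $\mathcal{L}v=f$ in $B_{\rho/2}(\xi_0)$.

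Comparing on a smaller ball would require $u\le\phi-\delta$ on an annulus, which you do not have, since the touching need not be strict. Setting $\tilde\phi=u$ outside $B_{\rho/2}(\xi_0)$ does not help either, because nothing prevents $u=\phi$ near that sphere.

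The missing ingredient is the paper's \cref{lem-bound}. You should perturb by $\delta\chi$ with $\chi\in C^2_0(B(\xi_0,\rho/2))$, $0\le\chi\le1$, $\chi(\xi_0)=1$, which gives $\sup_{B(\xi_0,\rho)}|\mathcal{L}\phi_\rho-\mathcal{L}(\phi_\rho-\delta\chi)|\to0$. In case (b) you must also take $\chi\equiv1$ near $\xi_0$ and $\delta$ so small that no new critical points appear and $\phi-\delta\chi$ stays in $C^2_\beta$. This matters because Steps 1--2 must be run for the perturbed function, not for $\phi_\rho$; the paper applies \cref{lem-cont2} to $\phi_\delta$.

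\textbf{Step 1.} The route you announce for the singular range also cannot work. For $p\le\frac{2}{2-s}$ one has $p(1-s)<1$, and at every $\xi\neq\xi_0$ near $\xi_0$ the gradient $\nabla_\hn\phi(\xi)$ is nonzero. Therefore $|\phi(\xi)-\phi(\eta)|^{p-1}K(\xi,\eta)\gtrsim|\eta^{-1}\circ\xi|_\hn^{p-1-Q-sp}$ on a cone of directions, which is not integrable near $\xi$. So there is no uniform integrable majorant, and dominated convergence cannot be applied to the local part.

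What is needed is a bound on $\textup{P.V.}\int_{B_\varepsilon(\xi)}$ that is uniform in $\xi$ and vanishes as $\varepsilon\to0$; this is \cref{lem-pv2}. There one subtracts the horizontal affine part, whose principal value vanishes by your reflection argument (\cref{lem:affine}). The remainder is bounded via \cref{lem:algebraic}(2), and the radial integral is split at $r=d_\phi(\xi)$. For $r<d_\phi(\xi)$ one uses the \emph{lower} bound $|\nabla_\hn\phi|\gtrsim d_\phi^{\beta-1}$, which is what $C^2_\beta$ actually provides; you quoted it as an upper bound. The result is $\lesssim\varepsilon^{\beta(p-1)-sp}$.

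Also, the exponent in your radial integral should be $\beta(p-1)$, since $|\phi(\xi_0)-\phi(\eta)|\lesssim\rho^\beta$. With $(\beta-1)(p-1)$, the hypothesis $\beta>\frac{sp}{p-1}$ would not suffice.

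Finally, your first bullet should read ``$p>\frac{2}{2-s}$ or $\nabla_\hn\phi(\xi_0)\neq0$'' to match condition (a). The subcase $\frac{2}{2-s}<p<2$ with $\nabla_\hn\phi(\xi_0)=0$ likewise needs the principal-value bound of \cref{lem-pv1}, which is independent of $|\nabla_\hn\phi(\xi)|$, rather than absolute convergence.
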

        \begin{theorem} \label{maintheorem2}
Let $s\in(0,1)$ and assume that $p>\frac{2}{2-s}.$ Let $f\in C(\Omega)$, and let
$u\in L^\infty(\mathbb H^N)$ be a viscosity subsolution of $\mathcal Lu=f \quad\text{in }\Omega.$ Then $u$ is a weak subsolution of $\mathcal Lu=f\text{ in }\Omega.$
The analogous conclusions hold for viscosity supersolutions and viscosity solutions.
\end{theorem}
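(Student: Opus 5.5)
We follow the Euclidean scheme of \cite{Korven2019,Medina2021} and adapt each step to the group structure. By symmetry of the statement it suffices to treat viscosity supersolutions. We show that a bounded viscosity supersolution $u$ of $\mathcal{L}u=f$ in $\Omega$ is a weak supersolution, working on a fixed $\Omega'\Subset\Omega$.

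\textbf{Step 1: regularize $u$.} We use the left-invariant infimal convolution
\[
u_\varepsilon(\xi):=\inf_{\eta\in\hn}\Big(u(\eta)+\frac{|\eta^{-1}\circ\xi|_\hn^{q}}{q\,\varepsilon^{q-1}}\Big),
\]
with $q>\max\{2,sp/(p-1)\}$ large. The convention $\eta^{-1}\circ\xi$ is chosen so that $u_\varepsilon$ commutes with left translations, which is compatible with the left-translation invariance of $K$. Since $u$ is bounded and lower semicontinuous, the following properties hold.
\begin{itemize}
\item[(i)] $u_\varepsilon\uparrow u$ pointwise.
\item[(ii)] The infimum is attained at a point $\eta_\varepsilon(\xi)$ with $|\eta_\varepsilon(\xi)^{-1}\circ\xi|_\hn\le C\varepsilon^{(q-1)/q}$.
\item[(iii)] $u_\varepsilon$ is semiconcave in the Heisenberg sense. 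At each point it is touched from above by the left translate of the smooth gauge-power paraboloid $\xi\mapsto |\eta_\varepsilon^{-1}\circ\xi|^q_\hn$.
\end{itemize}
Next, we show that $u_\varepsilon$ is a viscosity supersolution of $\mathcal{L}u_\varepsilon\ge f_\varepsilon$ in $\Omega'$, where $f_\varepsilon(\xi):=\inf_{|\zeta|_\hn\le C\varepsilon^{(q-1)/q}} f(\xi\circ\zeta)$. The argument runs as follows. If $\phi$ touches $u_\varepsilon$ from below at $\xi_0$, then $\phi(\eta_0\circ\xi_0^{-1}\circ\cdot)$ touches $u$ from below at $\eta_0$, with $\eta_0$ the minimizer. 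Left invariance of $K$ then transfers the inequality. This is exactly the point where noncommutativity forces left translations rather than Euclidean shifts.

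\textbf{Step 2: pointwise inequality at a.e.\ point.} We show that $\mathcal{L}u_\varepsilon(\xi)$ exists pointwise and that $\mathcal{L}u_\varepsilon(\xi)\ge f_\varepsilon(\xi)$ for a.e.\ $\xi\in\Omega'$. Semiconcavity yields an Aleksandrov-type statement: at a.e.\ point $u_\varepsilon$ has a second-order Taylor expansion in the sense of Theorem~\ref{thm:taylor}. Hence $u_\varepsilon$ is touched from below by a $C^2$ function (the paraboloid minus a small multiple of $|\cdot|^2_\hn$) and from above by the semiconcavity paraboloid. At such points the principal value converges. If $\nabla_\hn\neq0$, we use the affine principal value formula together with the cancellation of the odd part of the kernel, which follows from $|\zeta^{-1}|_\hn=|\zeta|_\hn$ and symmetry of $K$. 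If $\nabla_\hn=0$, the restriction $p>\frac2{2-s}$ makes $|\zeta|_\hn^{2(p-1)}$ integrable against $|\zeta|_\hn^{-Q-sp}$ near the origin. In both cases the viscosity definition applies with $\phi_r$, and letting $r\to0$ gives the pointwise inequality by dominated convergence.

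\textbf{Step 3: weak inequality for $u_\varepsilon$.} We show that $u_\varepsilon$ is a weak supersolution with right-hand side $f_\varepsilon$. Semiconcavity gives a one-sided bound on the second horizontal derivatives, so the pointwise operator is bounded below locally uniformly away from the diagonal. We pair the pointwise inequality with a nonnegative test function $\varphi\in C_0^\infty(\Omega')$ and symmetrize using $K(\xi,\eta)=K(\eta,\xi)$. To justify exchanging the principal value with the double integral, we truncate at $\{|\eta^{-1}\circ\xi|_\hn>\delta\}$, where the symmetrization is exact. For the passage $\delta\to0$ we mollify $u_\varepsilon$ by group convolution with a Heisenberg mollifier, and we use a uniform lower bound on the truncated operators together with Fatou's lemma. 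We expect this nonlocal integration by parts, controlling the near-diagonal part uniformly, to be the main obstacle. The first attempt is to split the near-diagonal region into the semiconcave one-sided bound plus an $L^1$-controlled remainder.

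\textbf{Step 4: pass to the limit.} We obtain uniform local energy bounds $\sup_\varepsilon[u_\varepsilon]_{W^{s,p}(B)}<\infty$. These come from testing the weak inequality with $(M-u_\varepsilon)\varphi^p$, using $\|u_\varepsilon\|_\infty\le\|u\|_\infty$ and the tail control coming from boundedness. By compactness, $u_\varepsilon\to u$ weakly in $W^{s,p}_{\mathrm{loc}}$, and $u\in W^{s,p}_{\mathrm{loc}}$. Monotone convergence together with the energy bounds gives convergence of the nonlinear form $|u_\varepsilon(\xi)-u_\varepsilon(\eta)|^{p-2}(u_\varepsilon(\xi)-u_\varepsilon(\eta))$ weakly in $L^{p'}$. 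Meanwhile $f_\varepsilon\to f$ locally uniformly by continuity of $f$. Passing to the limit in the weak inequality shows that $u$ is a weak supersolution.
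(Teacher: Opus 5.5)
Your architecture is the paper's: infimal convolution, an a.e. pointwise inequality, mollification plus Fatou to get the weak inequality for $u_\varepsilon$, a Caccioppoli bound via $(M-u_\varepsilon)\varphi^p$, and a Vitali-type limit. However, Step~1 fails at exactly the noncommutative point you single out.

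\textbf{Main gap: the convention in the infimal convolution.} With $|\eta^{-1}\circ\xi|_{\mathbb{H}^N}^q$, the substitution $\eta=\xi\circ w$ gives
\[
u_\varepsilon(\xi)=\inf_{w}\Big\{u(\xi\circ w)+\frac{|w|_{\mathbb{H}^N}^q}{q\varepsilon^{q-1}}\Big\},
\]
which is an infimum of \emph{right} translates of $u$. Suppose $\phi$ touches $u_\varepsilon$ from below at $\xi_0$ with minimizer $\eta_0=\xi_0\circ w_0$. The function that lies below $u$ and touches it at $\eta_0$ is $\gamma\mapsto\phi(\gamma\circ\eta_0^{-1}\circ\xi_0)-|w_0|^q_{\mathbb{H}^N}/(q\varepsilon^{q-1})$, a right translate of $\phi$.

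Your left translate has no reason to stay below $u$. As written it does not even equal $\phi(\xi_0)$ at $\eta_0$; presumably you meant $\phi(\xi_0\circ\eta_0^{-1}\circ\cdot)$. For that function, the definition of $u_\varepsilon$ only yields
\[
\phi(\xi_0\circ\eta_0^{-1}\circ\gamma)\le u(\gamma)+\frac{|\gamma^{-1}\circ\xi_0\circ\eta_0^{-1}\circ\gamma|^q_{\mathbb{H}^N}}{q\varepsilon^{q-1}}.
\]
The gauge is not conjugation invariant: conjugating by $\gamma$ shifts the vertical component by a term bilinear in the horizontal components.

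Nor can you switch to the right translate, because $K$ is only left invariant. In general $K(\xi\circ w,\eta\circ w)\neq K(\xi,\eta)$, already for $K=|\eta^{-1}\circ\xi|_{\mathbb{H}^N}^{-Q-sp}$. Right translations also do not map the principal-value balls $B(\xi,\rho)$ to balls. So ``left invariance of $K$ transfers the inequality'' is false for your $u_\varepsilon$.

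That your $u_\varepsilon$ commutes with left translations is true but is not the property needed. What is needed is that $u_\varepsilon$ be an infimum of \emph{left} translates $u(\zeta\circ\cdot)$, each of which is a viscosity supersolution with datum $f(\zeta\circ\cdot)$. The paper gets this by using $|\eta\circ\xi^{-1}|_{\mathbb{H}^N}^q$. Then:
\begin{itemize}
\item $u_\varepsilon(\xi)=\inf_\zeta\{u(\zeta\circ\xi)+|\zeta|^q_{\mathbb{H}^N}/(q\varepsilon^{q-1})\}$;
\item the transferred test function is $\phi(\zeta_0^{-1}\circ\cdot)-|\zeta_0|^q_{\mathbb{H}^N}/(q\varepsilon^{q-1})$;
\item the datum is $f_\varepsilon(\xi)=\inf_\zeta f(\zeta\circ\xi)$, not $f(\xi\circ\zeta)$.
\end{itemize}
The price is that the minimizer $\zeta_0\circ\xi_0$ lies at left-invariant distance $|\xi_0^{-1}\circ\zeta_0\circ\xi_0|_{\mathbb{H}^N}$ from $\xi_0$. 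This is small only locally uniformly in $\xi_0$, which is harmless on $\Omega'\Subset\Omega$.

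\textbf{Smaller problem in Step~2.} ``The paraboloid minus a small multiple of $|\cdot|^2_{\mathbb{H}^N}$'' is not a $C^2$ test function. On the vertical line through $\xi$, $|\xi^{-1}\circ\gamma|^2_{\mathbb{H}^N}=|t_\gamma-t_\xi|$, which is not differentiable at $\gamma=\xi$.

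More fundamentally, a second-order expansion with remainder only $o(|h|^2_{\mathbb{H}^N})$ controls the vertical direction just to order $o(|t_h|)$. No $C^2$ correction can absorb that, since a $C^2$ function vanishing to second order at $\xi$ is $O(t_h^2)$ on the vertical line.

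You need the Euclidean route instead:
\begin{itemize}
\item Take $q\ge 4$, so that $|\cdot|^q_{\mathbb{H}^N}$ is Euclidean $C^{1,1}$. Along the $t$-axis it equals $|t|^{q/2}$, so your condition $q>\max\{2,sp/(p-1)\}$ is not enough.
\item Deduce Euclidean semiconcavity of $u_\varepsilon$ and apply the Euclidean Alexandrov theorem.
\item Touch from below with the Euclidean second-order polynomial minus $\delta|\cdot|_E^2$.
\end{itemize}
With that change, your $r\to0$ argument goes through as in the paper; it uses $p>\frac{2}{2-s}$ to integrate $|h|^{2(p-1)}_{\mathbb{H}^N}$ against $|h|^{-Q-sp}_{\mathbb{H}^N}$.
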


    \subsection{Plan of the Paper} In \cref{sec:aux}, we collect some auxilliary results related to algebraic inequalities and properties of the principal value. In \cref{sec:comp}, we prove the comparison principle for solutions of the $p$-Laplace equation on the Heisenberg group. In \cref{sec:thm1}, we prove that weak solutions are viscosity solutions and in \cref{sec:thm2}, we prove that viscosity solutions are weak solutions.

    \section{Auxilliary Results} \label{sec:aux}
    \subsection{Algebraic inequalities} In this section, we will prove certain algebraic identities related to the $p$-structure. The proofs are motivated from \cite{Korven2019} even as they require working with the homogeneous norm of the Heisenberg group.
We begin by establishing that the principal value of the nonlocal operator vanishes on affine functions, a property that will play a fundamental role in the localization arguments developed later.
    \begin{lemma}\label[lemma]{lem:affine}(Vanishing principal value for affine functions)
    Let $l : \mathbb{H}^N \to \mathbb{R}$ be a Euclidean affine function of the form $l(\xi) = a+b\cdot z+ct$ where $b\in\mathbb{R}^{2N}$, $a,c\in\mathbb{R}$ and $b\cdot z$ represents the Euclidean dot product of $b$ and $z$ and let $0<r<\infty$.
    Then
    \begin{align*}
    \int_{B_r(\xi)\setminus B_{\varepsilon}(\xi)} |l(\xi)-l(\eta)|^{p-2} \, (l(\xi)-l(\eta))\, K(\xi,\eta)\, d\eta = 0,
    \end{align*} for all $0< \varepsilon < r$.
    \end{lemma}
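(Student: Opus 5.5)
The plan is to exploit the group structure: change variables to move the center $\xi$ to the origin, and then show the integrand is odd under the group inversion $\zeta\mapsto\zeta^{-1}$. Inversion preserves both the annulus and Lebesgue measure.

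\textbf{Step 1 (integrability).} On $B_r(\xi)\setminus B_\varepsilon(\xi)$ the kernel satisfies $K(\xi,\eta)\le \Lambda\varepsilon^{-(Q+sp)}$ by \eqref{eq:Kernel}. The function $l$ is continuous, so $|l(\xi)-l(\eta)|^{p-1}$ is bounded on the bounded set $B_r(\xi)$. Hence the integrand is in $L^1$, and every manipulation below is legitimate. No principal value is involved.

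\textbf{Step 2 (change of variables).} Substitute $\eta=\xi\circ\zeta$. Left translation by $\xi$ is an affine map of $\mathbb R^{2N+1}$ with unit Jacobian, since the Haar measure is Lebesgue measure. The set $B_r(\xi)\setminus B_\varepsilon(\xi)$ becomes the annulus $A:=\{\varepsilon\le|\zeta|_\hn<r\}$, because $|\xi^{-1}\circ\eta|_\hn=|\zeta|_\hn$. By left-translation invariance, $K(\xi,\xi\circ\zeta)=K(0,\zeta)$.

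Write $\xi=(x,y,t)$ and $\zeta=(w_x,w_y,s)$, with $w=(w_x,w_y)$. By the group law,
$$l(\xi\circ\zeta)-l(\xi)=b\cdot w+c\bigl(s+2\langle w_x,y\rangle-2\langle x,w_y\rangle\bigr)=:L(\zeta).$$
For fixed $\xi$, $L$ is a linear function of the coordinates of $\zeta$.

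\textbf{Step 3 (oddness).} Since $\zeta^{-1}=(-w_x,-w_y,-s)$, linearity gives $L(\zeta^{-1})=-L(\zeta)$.

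For the kernel, apply left-translation invariance with $\zeta$ and then symmetry:
$$K(0,\zeta^{-1})=K(\zeta\circ 0,\zeta\circ\zeta^{-1})=K(\zeta,0)=K(0,\zeta).$$

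The inversion $\zeta\mapsto\zeta^{-1}$ is a linear map of determinant $(-1)^{2N+1}$, so it preserves Lebesgue measure. It also maps $A$ onto itself, because $|\zeta^{-1}|_\hn=|\zeta|_\hn$.

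\textbf{Step 4 (conclusion).} Let $I$ denote the integral in the statement. Applying the substitution $\zeta\mapsto\zeta^{-1}$ in
$$I=-\int_A|L(\zeta)|^{p-2}L(\zeta)\,K(0,\zeta)\,d\zeta$$
turns the integrand into its negative. Therefore $I=-I$, and so $I=0$.

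The only point needing care is the kernel symmetry in Step 3. Symmetry alone does not give $K(0,\zeta^{-1})=K(0,\zeta)$; it must be combined with left invariance. This is the step where the noncommutative structure enters. The other ingredient is checking that the correction term $2\langle w_x,y\rangle-2\langle x,w_y\rangle$ in the group law is linear in $\zeta$, which it is.
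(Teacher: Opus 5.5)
Your proof is correct and follows the paper's argument. You left-translate to the origin, use $K(\xi,\xi\circ\zeta)=K(0,\zeta)$, substitute $\zeta\mapsto\zeta^{-1}=-\zeta$, and combine left invariance with symmetry to get $K(0,\zeta^{-1})=K(\zeta,0)=K(0,\zeta)$, hence $I=-I$; your explicit integrability check in Step 1 is a small point the paper leaves implicit.
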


    \begin{proof}
        By left-translation, note that
        \begin{equation*}
        \begin{aligned}
    &\int_{B(\xi,r)\setminus B(\xi,\varepsilon)} |l(\xi)-l(\eta)|^{p-2} \, (l(\xi)-l(\eta))\, K(\xi,\eta)\, d\eta = \\ & \quad \int_{B(0,r)\setminus B(0,\varepsilon)} |l(\xi)-l(\xi\circ\zeta)|^{p-2} \, (l(\xi)-l(\xi\circ\zeta))\, K(\xi,\xi\circ\zeta)\, d\zeta. 
    \end{aligned}
    \end{equation*}
    Then, by definition of $l$, for $\zeta = (z',t')=(x',y',t')$ we have
    
    \resizebox{\linewidth}{!}{
    \begin{minipage}{\linewidth}
    \begin{align*}
        g(\xi)&:=\int_{B(0,r)\setminus B(0,\varepsilon)} |l(\xi)-l(\xi\circ\zeta)|^{p-2} \, (l(\xi)-l(\xi\circ\zeta))\, K(\xi,\xi\circ\zeta)\, d\zeta\\
        & = -\int_{B(0,r)\setminus B(0,\varepsilon)} |b\cdot z'+c(t'+2(x'\cdot y - y'\cdot x))|^{p-2} \, (b\cdot z'+c(t'+2(x'\cdot y - y'\cdot x)))\, K(\xi,\xi\circ\zeta)\, d\zeta\\
        & = -\int_{B(0,r)\setminus B(0,\varepsilon)} |b\cdot z'+c(t'+2(x'\cdot y - y'\cdot x))|^{p-2} \, (b\cdot z'+c(t'+2(x'\cdot y - y'\cdot x)))\, K(0,\zeta)\, d\zeta\\
        & = -\int_{B(0,r)\setminus B(0,\varepsilon)} |-b\cdot z'-c(t'+2(x'\cdot y - y'\cdot x))|^{p-2} \, (-b\cdot z'-c(t'+2(x'\cdot y - y'\cdot x)))\, K(0,-\zeta)\, d\zeta\\
        & = -\int_{B(0,r)\setminus B(0,\varepsilon)} |-b\cdot z'-c(t'+2(x'\cdot y - y'\cdot x))|^{p-2} \, (-b\cdot z'-c(t'+2(x'\cdot y - y'\cdot x)))\, K(0,\zeta^{-1})\, d\zeta\\
        & = -\int_{B(0,r)\setminus B(0,\varepsilon)} |-b\cdot z'-c(t'+2(x'\cdot y - y'\cdot x))|^{p-2} \, (-b\cdot z'-c(t'+2(x'\cdot y - y'\cdot x)))\, K(\zeta,0)\, d\zeta\\
        & = \phantom{-}\int_{B(0,r)\setminus B(0,\varepsilon)} |b\cdot z'+c(t'+2(x'\cdot y - y'\cdot x))|^{p-2} \, (b\cdot z'+c(t'+2(x'\cdot y - y'\cdot x)))\, K(0,\zeta)\, d\zeta\\
        &=-g(\xi),
    \end{align*}
    \end{minipage}}
    where in the second step, we have used the definition of $l$, in the third step, we have used left-translation invariance of $K$; in the fourth step, we perform the transformation $\zeta\mapsto-\zeta$; in the fifth step, we use the fact that $\zeta^{-1} = -\zeta$; in the sixth step, we use left-translation invariance of $K$; and in the seventh step, we use the symmetry of $K$.
    \end{proof}

 The proofs of the following algebraic identities can be found in \cite[Lemma~3.2-3.4]{Korven2019}. These lemma collects several fundamental algebraic inequalities satisfied by the nonlinear map \(t \mapsto |t|^{p-2}t\), which will be used repeatedly throughout the paper in the analysis of the nonlocal operator and the derivation of the main estimates.
\begin{lemma}\label[lemma]{lem:algebraic}(Algebraic inequalities for the p-structure)
(\cite[Lemma~3.2-3.4]{Korven2019},\cite[Lemma~2.4]{Medina2021}) Let $p>1$, $a,b\in \mathbb{R}$. Then     \begin{enumerate}
 \item it holds that
\begin{align*}
     c_p (|a|+|b|)^{p-2} \le \int_0^1 |\, a+t(b-a)\,|^{p-2}\, dt \le C_p (|a|+|b|)^{p-2},
 \end{align*}
  where
\begin{align*}
     c_p =   \begin{cases}
                    1, & 1<p\le 2,\\
                    \dfrac{4^{2-p}}{p-1}, & 2\le p,
                \end{cases}
    \qquad
        C_p =   \begin{cases}
                    \dfrac{4^{2-p}}{p-1}, & 1<p<2,\\
                    1, & 2\le p.
                \end{cases}
    \end{align*}
    
    \item  it holds that
    \begin{align*}
        \bigl|\, |a|^{p-2} a - |b|^{p-2} b \,\bigr|\;\le\;C \bigl( |b| + |a-b| \bigr)^{p-2} |a-b|
    \end{align*}
    for some $C = C(p)>0$.
    \item it holds that
    \begin{align*}
        |a|^{p-2}a-|b|^{p-2}b = (p-1)(a-b)\int_0^1 |\, ta+(1-t)b)\,|^{p-2}\, dt
    \end{align*}
    \end{enumerate}
    \end{lemma}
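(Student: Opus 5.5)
The three statements are elementary facts about the real map $t\mapsto |t|^{p-2}t$. They involve no Heisenberg geometry, so I will prove them directly on $\mathbb{R}$, in the order (3), (1), (2).

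\emph{Item (3).} Set $g(t)=|t|^{p-2}t$, so that $g'(t)=(p-1)|t|^{p-2}$ for $t\neq 0$. If $0$ does not lie on the segment $[b,a]$, the fundamental theorem of calculus along $t\mapsto ta+(1-t)b$ gives the identity immediately. If $0$ lies on the segment, the integrand has at most one singular point $t_0$ with a singularity of type $|t-t_0|^{p-2}$. Since $p>1$ this is integrable, and since $g$ is absolutely continuous, the identity still holds. When $a=b$ both sides vanish.

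\emph{Item (1).} Write $a+t(b-a)$ as the segment from $a$ to $b$. By symmetry, and after scaling by $|a|+|b|$ using homogeneity of degree $p-2$, I may assume $|a|+|b|=1$.
\begin{itemize}
\item For $p\ge 2$, the upper bound is immediate because $|a+t(b-a)|\le |a|+|b|$.
\item For $p\ge 2$, the lower bound comes from the fact that on a subinterval of length at least $1/4$ the point satisfies $|a+t(b-a)|\ge 1/4$. Either the endpoints have the same sign, so one of them has size at least $1/2$ and is attained near that end; or they have opposite signs, so $|b-a|=1$ and the point moves at unit speed. This yields a constant of the stated type, possibly after adjusting the bookkeeping.
\item For $1<p\le 2$, the roles reverse. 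The lower bound is trivial, with $c_p=1$, since the exponent is nonpositive.
\item For $1<p\le 2$, the upper bound follows from an explicit integration of $|a+t(b-a)|^{p-2}$. The worst case is a segment through $0$ with $|b-a|\ge 1$, which gives a bound of the form $\frac{C}{p-1}$.
\end{itemize}

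\emph{Item (2).} Combine (3) with the upper bound in (1): this gives $\bigl||a|^{p-2}a-|b|^{p-2}b\bigr|\le (p-1)C_p(|a|+|b|)^{p-2}|a-b|$.
\begin{itemize}
\item For $p\ge 2$, use $|a|\le |b|+|a-b|$, so $|a|+|b|\le 2(|b|+|a-b|)$.
\item For $p<2$, use instead $|b|+|a-b|\le 2(|a|+|b|)$, which reverses the inequality in the correct direction because the exponent is negative.
\end{itemize}

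\textbf{Main obstacle.} The only delicate step is matching the precise constants $c_p$ and $C_p$ stated in (1), which requires a careful case analysis of where the segment meets $0$. Since the paper cites \cite{Korven2019} and \cite{Medina2021} for exactly these constants, I would follow their computation for the constants and verify only the qualitative two-sided bounds independently.
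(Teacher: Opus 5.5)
Your arguments for (3) and (2), and for the two trivial halves of (1), are correct. The trivial halves are the upper bound with $C_p=1$ for $p\ge2$ and the lower bound with $c_p=1$ for $1<p\le2$. The paper gives no proof of its own here; it only cites \cite{Korven2019} and \cite{Medina2021}. Item (2) needs only some upper constant in (1), so it does not depend on the precise $C_p$.

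The gap is the lower bound in (1) for $p\ge2$ with the stated constant $c_p=\frac{4^{2-p}}{p-1}$, which you defer to the references. Your subinterval argument does not reach it. Normalize $|a|+|b|=1$ and take $a=-b=\frac12$. The set of $t$ where $|a+t(b-a)|\ge\frac14$ then has length exactly $\frac12$. So your argument yields at best $\frac12\,4^{2-p}$, and only $\frac14\,4^{2-p}$ with the length you actually state. This is strictly smaller than $\frac{4^{2-p}}{p-1}$ for $2<p<3$ (resp. $2<p<5$). Hence (1) is not proved as stated, and ``adjusting the bookkeeping'' does not close this. The factor $\frac1{p-1}$ comes from actually integrating the power.

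The fix is to carry out, for every $p>1$, the exact computation you already propose for $p<2$. Keep the normalization $|a|+|b|=1$. If $ab<0$, then $|b-a|=1$ and
\begin{align*}
\int_0^1|a+t(b-a)|^{p-2}\,dt=\frac{|a|^{p-1}+|b|^{p-1}}{p-1}.
\end{align*}
If $ab\ge0$, by symmetry you may take $0\le a\le b$, so $b\ge\frac12$. For $a<b$ the integral equals $\frac{b^{p-1}-a^{p-1}}{(p-1)(b-a)}$. This is $\ge\frac{b^{p-2}}{p-1}$ when $p\ge2$ and $\le\frac{b^{p-2}}{p-1}$ when $p\le2$, since each inequality is equivalent to $ab^{p-2}\ge a^{p-1}$ (resp. $\le$). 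For $a=b=\frac12$ the integral equals $2^{2-p}$, which obeys the same bounds because $p-1\ge1$ (resp. $\le1$).

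Now use convexity (for $p\ge2$) or concavity (for $p\le2$) of $x\mapsto x^{p-1}$, together with $b\ge\frac12$. This gives
\begin{align*}
\int_0^1|a+t(b-a)|^{p-2}\,dt&\ge\frac{2^{2-p}}{p-1}\quad\text{for }p\ge2,\\
\int_0^1|a+t(b-a)|^{p-2}\,dt&\le\frac{2^{2-p}}{p-1}\quad\text{for }1<p\le2.
\end{align*}
These imply the stated $c_p$ and $C_p$, because $4^{2-p}\le2^{2-p}$ for $p\ge2$ and $4^{2-p}\ge2^{2-p}$ for $p\le2$.

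The same computation proves your unproved claim that the segment through $0$ is the extremal case for $p<2$. Under your normalization $|b-a|\le1$, with equality exactly when $ab\le0$, so ``$|b-a|\ge1$'' should read $|b-a|=1$.
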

    We next derive an auxiliary integral estimate on the homogeneous sphere, which will be useful in proving the well-definedness of the nonlocal operator.
    \begin{lemma}\label[lemma]{lem:sphere}(Angular integral estimate on the homogeneous sphere)
    Let $e\in \mathbb{R}^{2N}$ be such that $|e|_{\mathbb{R}^{2N}}=1$, $p>1$ and $a\geq 0$.
    Then
    \begin{align*}
        \int_{S(0,1)} \bigl( |(e\cdot z_\sigma)| + a\bigr)^{p-2} d\sigma\;\le\; C (1+a)^{p-2}
    \end{align*} for some $C = C(Q,p)>0$, where $d\sigma$ is as in \cref{eq:polar}, $S(0,1)$ denotes the unit sphere in $\mathbb{R}^{2N+1}$, and $z_\sigma$ denotes the horizontal component of $\sigma\in S(0,1)$.
    \end{lemma}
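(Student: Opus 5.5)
Split into the cases $p\ge 2$ and $1<p<2$.

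\emph{Case $p\ge 2$.} Here the exponent $p-2$ is nonnegative. For $\sigma\in S(0,1)$, the horizontal component satisfies $|z_\sigma|\le 1$, so $|e\cdot z_\sigma|\le 1$ by Cauchy--Schwarz. Hence the integrand is bounded by $(1+a)^{p-2}$. Integrating over $S(0,1)$ with respect to $d\sigma$ gives the claim with $C=\sigma(S(0,1))$. This is finite because $d\sigma$ comes from the smooth angular density $J$ in \eqref{eq:polar} on the compact parameter domain.

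\emph{Case $1<p<2$.} Now $p-2\in(-1,0)$, so the integrand is decreasing in both $|e\cdot z_\sigma|$ and $a$. I would treat $a\ge 1$ and $a<1$ separately.

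If $a\ge 1$, bound the integrand by $a^{p-2}\le 2^{2-p}(1+a)^{p-2}$, since $1+a\le 2a$. Integrating over the sphere again gives the bound.

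If $a<1$, then $(1+a)^{p-2}\ge 2^{p-2}$, so it suffices to show
\[
\int_{S(0,1)}|e\cdot z_\sigma|^{p-2}\,d\sigma<\infty ,
\]
using $(|e\cdot z_\sigma|+a)^{p-2}\le |e\cdot z_\sigma|^{p-2}$. This singular angular integral is the main obstacle.

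To handle it, pass back to volume integrals using \eqref{eq:polar}. Consider the shell $A=\{1\le \|\xi\|_\hn\le 2\}$. The function $|e\cdot z|^{p-2}$ is $\Phi_\lambda$-homogeneous of degree $p-2$, so
\[
\int_A |e\cdot z|^{p-2}\,d\xi=\int_1^2\rho^{p-2}\rho^{Q+1}\,d\rho\int_{S(0,1)}|e\cdot z_\sigma|^{p-2}\,d\sigma .
\]
The radial factor is a positive finite constant. It therefore suffices that the left side is finite. On $A$ all coordinates are bounded, and $A$ is a bounded subset of $\mathbb{R}^{2N+1}$. After a rotation in $z$ taking $e$ to the first coordinate direction (Lebesgue measure is invariant), the integral is dominated by
\[
C\int_{-R}^{R}|w|^{p-2}\,dw ,
\]
where $w=e\cdot z$ and the remaining $2N$ variables range over a bounded set. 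This is finite because $p-2>-1$.

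Finally, the sphere here is the one associated with $\|\cdot\|_\hn$ and $d\sigma$ as in \eqref{eq:polar}. The constant then depends only on $Q$ and $p$, independent of $e$, by rotation invariance of the $z$-variables (the defining equation of $\|\cdot\|_\hn$ is invariant under rotations of $z$). Combining the cases gives $C=C(Q,p)$.
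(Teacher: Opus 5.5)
Your proof is correct. The case analysis ($p\ge 2$; $1<p<2$ split into $a\ge 1$ and $a<1$) and the elementary bounds are the same as in the paper. The two arguments differ only at the one nontrivial point: showing that $\int_{S(0,1)}|e\cdot z_\sigma|^{p-2}\,d\sigma<\infty$.

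The paper computes directly on the sphere. The unit sphere of $\|\cdot\|_{\mathbb{H}^N}$ is $\{|z|^2+t^2=1\}$, so it writes $z_\sigma=\cos\psi_1\,\omega$ with $\omega\in\mathbb{S}^{2N-1}$ and rotates $e$ to $e_1$. It then invokes the density bound $d\sigma\le C(Q)\,|\cos\psi_1|^{2N-1}\,d\psi_1\,d\omega$. This factorizes the integral into $\int|\cos\psi_1|^{2N-1-\beta}\,d\psi_1$ times $\int_{\mathbb{S}^{2N-1}}|\omega_1|^{-\beta}\,d\omega$ with $\beta=2-p<1$, and both factors are finite.

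You instead use the $\Phi_\lambda$-homogeneity of $|e\cdot z|^{p-2}$ together with the polar formula. This converts the angular integral into a Lebesgue integral over the bounded shell $\{1\le\|\xi\|_{\mathbb{H}^N}\le 2\}$. You finish with Fubini and the local integrability of $|w|^{p-2}$ on $\mathbb{R}$.

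What your route buys:
\begin{itemize}
\item It needs neither the explicit angular parametrization nor the bound on the angular density, which the paper states rather than derives.
\item It works verbatim for any homogeneous norm and its associated surface measure.
\item Independence from $e$ is immediate, since the shell is invariant under rotations of $z$.
\item It is insensitive to the radial power in \eqref{eq:polar}. That display writes both $\rho^{Q-1}$ and $\rho^{Q+1}$, and a dilation argument shows $\rho^{Q-1}$ is correct. On $[1,2]$ that factor is only a positive constant, so your use of $\rho^{Q+1}$ does no harm.
\end{itemize}
The paper's computation, in exchange, is more explicit and would give a concrete value for the constant.
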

 
 \begin{proof}
Let $\sigma\in S(0,1)$ and write
\begin{equation*}
\sigma=(z_\sigma,t_\sigma)\in \mathbb R^{2N}\times \mathbb R.
\end{equation*}
Using the polar coordinates \eqref{eq:polar}, we write
\[
z_\sigma=\cos\psi_1\,\omega,
\]
where \(\omega\in S^{2N-1}\subset\mathbb R^{2N}\). Since \(\rho=1\) on \(S(0,1)\), we have
\[
|z_\sigma|=|\cos\psi_1|,
\qquad
t_\sigma=\sin\psi_1.
\]
By the rotational invariance of the horizontal variable $z_\sigma$, we may
assume without loss of generality that
$e=(1,0,\ldots,0)\in \mathbb R^{2N}.$
Therefore, 
$|e\cdot z_\sigma|=|x_1|,$ and
\begin{gather*}
x_1=\cos\psi_1\,\omega_1,
\end{gather*}
where $\omega_1$ is the first coordinate of $\omega$.

From the polar-coordinate formula \eqref{eq:polar}, the surface measure on
$S(0,1)$ has the form
$$
d\sigma
=
J(\psi_1,\ldots,\psi_{2N})\,d\psi_1\cdots d\psi_{2N}.
$$
Equivalently, after separating the first angular variable from the
Euclidean angular variables on $\mathbb S^{2N-1}$, we may write
\begin{align*}
d\sigma
\le
C(Q)\,
|\cos\psi_1|^{2N-1}\,d\psi_1\,d\omega.
\end{align*}
Therefore,
\[
\begin{aligned}
\int_{S(0,1)}
\bigl(|e\cdot z_\sigma|+a\bigr)^{p-2}\,d\sigma
&\le
C(Q)
\int
\int_{\mathbb S^{2N-1}}
\bigl(|\cos\psi_1|\,|\omega_1|+a\bigr)^{p-2}
|\cos\psi_1|^{2N-1}
\,d\omega\,d\psi_1.
\end{aligned}
\]

We now distinguish two cases.

First assume $p\ge 2$. Since
$
|e\cdot z_\sigma|\le |z_\sigma|\le 1,
$
we have
\begin{equation*}
|e\cdot z_\sigma|+a\le 1+a.
\end{equation*}
Hence
$$
\bigl(|e\cdot z_\sigma|+a\bigr)^{p-2}
\le
(1+a)^{p-2}.
$$
Thus
\begin{equation*}
\int_{S(0,1)}
\bigl(|e\cdot z_\sigma|+a\bigr)^{p-2}\,d\sigma
\le
\sigma(S(0,1))(1+a)^{p-2}.
\end{equation*}

It remains to consider $1<p<2$. Put $$\beta=2-p\in(0,1).$$ Then $p-2=-\beta.$
We need to estimate
\[
\int_{S(0,1)}
\bigl(|e\cdot z_\sigma|+a\bigr)^{-\beta}\,d\sigma .
\]

First suppose $0\le a\le 1$. Since $a\ge0$, we have
\[
|\cos\psi_1|\,|\omega_1|+a
\ge
|\cos\psi_1|\,|\omega_1|.
\]
Therefore,
\[
\bigl(|\cos\psi_1|\,|\omega_1|+a\bigr)^{-\beta}
\le
|\cos\psi_1|^{-\beta}|\omega_1|^{-\beta}.
\]
Consequently,
\[
\begin{aligned}
&\int_{S(0,1)}
\bigl(|e\cdot z_\sigma|+a\bigr)^{-\beta}\,d\sigma
\\
&\qquad\le
C(Q)
\int
\int_{\mathbb S^{2N-1}}
|\cos\psi_1|^{-\beta}
|\omega_1|^{-\beta}
|\cos\psi_1|^{2N-1}
\,d\omega\,d\psi_1
\\
&\qquad=
C(Q)
\left(
\int |\cos\psi_1|^{2N-1-\beta}\,d\psi_1
\right)
\left(
\int_{\mathbb S^{2N-1}}|\omega_1|^{-\beta}\,d\omega
\right).
\end{aligned}
\]

Both integrals are finite. Indeed, since $2N-1-\beta>-1$, we have
$$\int |\cos\psi_1|^{2N-1-\beta}\,d\psi_1<\infty.$$
Also, since \(\beta<1\),
\[
\int_{\mathbb S^{2N-1}}|\omega_1|^{-\beta}\,d\omega<\infty.
\]
Hence
\[
\int_{S(0,1)}
\bigl(|e\cdot z_\sigma|+a\bigr)^{-\beta}\,d\sigma
\le C(Q,p).
\]
Since \(0\le a\le 1\), we also have
\begin{equation*}
(1+a)^{-\beta}\sim 1.
\end{equation*}
Thus
\begin{gather*}
\int_{S(0,1)}
\bigl(|e\cdot z_\sigma|+a\bigr)^{p-2}\,d\sigma
\le
C(Q,p)(1+a)^{p-2}.
\end{gather*}

Finally suppose $a\ge1$. Then $|e\cdot z_\sigma|+a\ge a,$
and hence
\begin{gather*}
\bigl(|e\cdot z_\sigma|+a\bigr)^{-\beta}
\le
a^{-\beta}.
\end{gather*}
Therefore,
\[
\int_{S(0,1)}
\bigl(|e\cdot z_\sigma|+a\bigr)^{-\beta}\,d\sigma
\le
\sigma(S(0,1))a^{-\beta}
\le
C(Q)(1+a)^{-\beta}.
\]
Since \(-\beta=p-2\), this gives
\begin{gather*}
\int_{S(0,1)}
\bigl(|e\cdot z_\sigma|+a\bigr)^{p-2}\,d\sigma
\le
C(Q,p)(1+a)^{p-2}.
\end{gather*}

Combining the cases $p\ge2$ and $1<p<2$, we obtain
\begin{equation*}
    \int_{S(0,1)}
\bigl(|e\cdot z_\sigma|+a\bigr)^{p-2}\,d\sigma
\le
C(Q,p)(1+a)^{p-2}.
\end{equation*}
This proves the lemma.
\end{proof}
  \subsection{Principal Value}
    Our objective is to establish that the principal value associated with the operator $\mathcal{L}$ is well defined for sufficiently smooth functions. For this purpose, we obtain uniform estimates on small balls, which are stated in the two lemmas below. We now estimate the principal value integral at points where the horizontal gradient is nonzero.
    \begin{lemma}\label[lemma]{lem-pv1}(Principal value estimate away from critical points)
Let $B(\xi,\varepsilon) \subset \mathcal{D} \Subset \Omega$ and $u \in C^2(\mathcal{D})$. Assume either $p>\frac{2}{2-s}$ or $\mathcal{D} \Subset \{\xi : d_u(\xi)>0\}$. Then, we have the following:
\begin{itemize}
\item [(i)]
\begin{align*}
\Big| \textup{P.V.}\int_{B(\xi,\varepsilon)}|u(\xi)-u(\eta)|^{p-2} ( u(\xi)-u(\eta)) K(\xi,\eta) d\eta \Big| \leq C_\varepsilon,
\end{align*}
for some $C_\varepsilon>0$, ($C_\varepsilon$ is independent of $x$).
\item [(ii)] $C_\varepsilon \to 0$ as $\varepsilon \to 0$.
\end{itemize}
    \end{lemma}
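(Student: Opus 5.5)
We follow the Euclidean scheme of \cite{Korven2019}: subtract the first-order Taylor part, use \cref{lem:affine} to kill the linear contribution, and control the remainder with \cref{lem:algebraic} and \cref{lem:sphere}.

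\textbf{Reduction to the linear part.} Fix $\xi$ with $B(\xi,\varepsilon)\subset\mathcal D$. By \cref{thm:taylor} with $m=1$, write
\begin{align*}
u(\eta)=l_\xi(\eta)+R_\xi(\eta), \qquad |R_\xi(\eta)|\le M\,\dhn{\eta}{\xi}^{2},
\end{align*}
where $l_\xi$ is the first-order Taylor polynomial at $\xi$. In the variables $\zeta=\xi^{-1}\circ\eta$ it reads $u(\xi)+\nabla_\hn u(\xi)\cdot z_\zeta$; the term $\partial_t u(\xi)\, t_\zeta$ has homogeneous degree $2$ and is absorbed into $R_\xi$. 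After left translation, $l_\xi$ is a Euclidean affine function, so \cref{lem:affine} shows that the integral of $|l_\xi(\xi)-l_\xi(\eta)|^{p-2}(l_\xi(\xi)-l_\xi(\eta))K$ vanishes on every annulus $B(\xi,\varepsilon)\setminus B(\xi,\delta)$. The constant $M$ is uniform in $\xi$ because $u\in C^2(\overline{\mathcal D})$ and $\mathcal D$ is relatively compact. Setting $a=l_\xi(\xi)-l_\xi(\eta)$ and $b=u(\xi)-u(\eta)$, the principal value therefore equals
\begin{align*}
\lim_{\delta\to0}\int_{B(\xi,\varepsilon)\setminus B(\xi,\delta)}\bigl(|b|^{p-2}b-|a|^{p-2}a\bigr)K(\xi,\eta)\,d\eta .
\end{align*}
It suffices to show that $\bigl||b|^{p-2}b-|a|^{p-2}a\bigr|K$ is integrable on $B(\xi,\varepsilon)$ with integral $C_\varepsilon\to0$ uniformly in $\xi$. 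This gives existence of the principal value together with (i) and (ii).

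\textbf{Case $p\ge2$.} By \cref{lem:algebraic}(2),
\begin{align*}
\bigl||b|^{p-2}b-|a|^{p-2}a\bigr|\le C\bigl(|a|+|a-b|\bigr)^{p-2}|a-b|\le C\,\rho^{p}+C\,\rho^{2},
\end{align*}
where $\rho=|\zeta|_\hn$, using $|a|\le C\rho$ and $|a-b|\le M\rho^2$. The constants $\|\nabla_\hn u\|_\infty$ and $M$ are uniform on $\mathcal D$, so $|a|+|a-b|\le C\rho$. Multiplying by $\Lambda\rho^{-Q-sp}$ and integrating with $d\zeta=\rho^{Q-1}d\rho\,d\sigma$ from \eqref{eq:polar} gives the bound $\int_0^\varepsilon \rho^{1-sp}\,d\rho$, which is finite because $sp<2$ for $p\ge2$. (The proof should also check that $s p<2$ holds, or else use a sharper bound.) The result is $C\varepsilon^{2-sp}\to0$.

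\textbf{Case $1<p<2$.} This is the main obstacle. Here $(|a|+|a-b|)^{p-2}$ blows up where $a$ vanishes. Write $a=|\nabla_\hn u(\xi)|\,\rho\,(e\cdot z_\sigma)$ with $e$ a unit vector, so that
\begin{align*}
\bigl(|a|+|a-b|\bigr)^{p-2}|a-b|\le M\rho^2\,\rho^{p-2}\bigl(|\nabla_\hn u(\xi)|\,|e\cdot z_\sigma|\bigr)^{p-2}.
\end{align*}

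\emph{Nondegenerate subcase.} If $\mathcal D\Subset\{d_u>0\}$, then $|\nabla_\hn u|\ge c_0>0$ on $\mathcal D$. \cref{lem:sphere} with $a=0$ bounds the angular integral, and the radial integral is $\int_0^\varepsilon\rho^{p-1-sp}\,d\rho<\infty$. This uses $sp<p$, which holds since $s<1$.

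\emph{Possibly degenerate subcase.} If only $p>\frac{2}{2-s}$ is assumed, the gradient may vanish and this bound fails. Instead I would discard $|a|$ entirely:
\begin{align*}
\bigl||b|^{p-2}b-|a|^{p-2}a\bigr|\le C|a-b|^{p-1}\le C\rho^{2(p-1)},
\end{align*}
which holds for $p<2$ by H\"older continuity of $t\mapsto|t|^{p-2}t$. Integrability then requires $2(p-1)-sp>0$, which is exactly $p>\frac{2}{2-s}$. The radial integral becomes $\int_0^\varepsilon\rho^{2p-3-sp}\,d\rho=C\varepsilon^{2p-2-sp}\to0$.

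\textbf{Conclusion.} In every case the bound depends only on $\|u\|_{C^2(\mathcal D)}$, $\Lambda$, $Q$, $p$, $s$, and in the nondegenerate subcase also on $\inf_{\mathcal D}|\nabla_\hn u|$. It is therefore independent of $\xi$ and tends to $0$ as $\varepsilon\to0$, which proves (i) and (ii).
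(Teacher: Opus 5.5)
Your scheme is the paper's. You subtract the horizontal affine part $u(\xi)+\nabla_\hn u(\xi)\cdot z_\zeta$ via \cref{lem:affine}, bound the difference with \cref{lem:algebraic}(2), and control $|a-b|$ by the second-order Taylor remainder. You then integrate in the polar coordinates \eqref{eq:polar} using \cref{lem:sphere}.

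Your treatment of $1<p<2$ is correct. In the nondegenerate subcase you use $\inf_{\mathcal D}|\nabla_\hn u|>0$ as the paper does. In the degenerate subcase, the bound $\bigl||b|^{p-2}b-|a|^{p-2}a\bigr|\le C|a-b|^{p-1}$ gives the paper's rate $\varepsilon^{2p-2-sp}$ directly. This bound is also \cref{lem:algebraic}(2) combined with $(|a|+|a-b|)^{p-2}\le|a-b|^{p-2}$. It covers points with $\nabla_\hn u(\xi)=0$ too, which the paper handles by a separate remark. Your observation that absolute integrability of the difference gives existence of the principal value is a useful addition.

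The case $p\ge2$, however, fails as written. The assertion that $sp<2$ whenever $p\ge2$ is false: for $s=3/4$ and $p=4$ one has $sp=3$. Then $\int_0^\varepsilon\rho^{1-sp}\,d\rho$ diverges, so your argument gives no bound, and the claimed rate $\varepsilon^{2-sp}$ does not tend to $0$. The source is the superfluous term $C\rho^2$. Since $p-2\ge0$ and $|a|+|a-b|\le C\rho$, your own inequality already gives
\[
\bigl||b|^{p-2}b-|a|^{p-2}a\bigr|\le C\bigl(|a|+|a-b|\bigr)^{p-2}|a-b|\le C(C\rho)^{p-2}M\rho^{2}=C\rho^{p}.
\]
Multiplying by $\Lambda\rho^{-Q-sp}$ and integrating against $\rho^{Q-1}\,d\rho\,d\sigma$ then gives $C\int_0^\varepsilon\rho^{p-1-sp}\,d\rho=C\varepsilon^{p(1-s)}$. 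This is finite and tends to $0$ for every $s\in(0,1)$. It is the leading term $c\tau\sup|\nabla_\hn u|^{p-2}\varepsilon^{p(1-s)}$ of the paper's estimate; the paper's second term $c\tau^{p-1}\varepsilon^{2p-2-sp}$ is of the same or higher order when $p\ge2$. With this correction the proposal is a complete proof along the paper's lines.
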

    \begin{proof}
        If $\nabla_\hn u (\xi)=0$ and $p>\tfrac{2}{2-s}$, the result follows by the use of coarea formula on the Heisenberg group (\cite[Prop~5.4.4]{Bonfig2007}), $u\in C^2$, $\nabla_\hn u(\xi)=0$ and the stratified Taylor's formula on the Heisenberg group (\cite[Theorem~20.3.3]{Bonfig2007}), which implies
        \begin{align*}
            |u(\xi)-u(\eta)| \leq C\dhn{\eta}{\xi}^2
        \end{align*} for some constant $C$.
        Hence, it is enough to consider only the case $\nabla_\hn u\neq 0$.
        Let $l(\eta) = u(\xi) +\nabla_\hn(\xi)\cdot (z_\eta-z_\xi)$ be the horizontal affine part of $u$ near $\xi$. Let $g(t)=|t|^{p-2} t$, we have
        \begin{align*}
            &\Bigg|\int_{B(\xi,\varepsilon)}|u(\xi)-u(\eta)|^{p-2}(u(\xi)-u(\eta))K(\xi,\eta)\,d\eta\Bigg|\\
            &\qquad\leq \int_{B(\xi,\varepsilon)}|g(u(\xi)-u(\eta))-g(l(\xi)-l(\eta))|K(\xi,\eta)\,d\eta\\
            &\qquad\leq \int_{B(\xi,\varepsilon)}(|l(\xi)-l(\eta)|+|u(\eta)-l(\eta)|)^{p-2}|u(\eta)-l(\eta)|K(\xi,\eta)\,d\eta\\
            &\qquad = c\int_{B(\xi,\varepsilon)}(|\nabla_\hn u(\xi)\cdot (z_\eta-z_\xi)|+|u(\eta)-l(\eta)|)^{p-2}|u(\eta)-l(\eta)|K(\xi,\eta)\,d\eta,
        \end{align*} where, in the first step, we use \cref{lem:affine}, and in the second step, we use \cref{lem:algebraic}.

        Now, by left-translation, we have, with $\tau:=\sup_D\{X_iX_ju(\xi):1\leq i,j\leq 2N\}$,
        \begingroup
        \allowdisplaybreaks
        \begin{align}\label{eq:est1}
            &\Bigg|\int_{B(\xi,\varepsilon)}|u(\xi)-u(\eta)|^{p-2}(u(\xi)-u(\eta))K(\xi,\eta)\,d\eta\Bigg|\nonumber\\
            &\qquad \leq c\int_{B(0,\varepsilon)}\Big(|\nabla_\hn u(\xi)\cdot (z_{\xi\circ\zeta}-z_\xi)|+|u(\xi\circ\zeta)-l(\xi\circ\zeta)|\Big)^{p-2}\nonumber\\
            &\hspace{3cm}|u(\xi\circ\zeta)-l(\xi\circ\zeta)|K(\xi,\xi\circ\zeta)\,d\zeta\nonumber\\
            &\qquad \leq c\int_{B(0,\varepsilon)}(|\nabla_\hn u(\xi)\cdot (z_{\xi\circ\zeta}-z_\xi)|+\tau|\zeta|_\hn^2)^{p-2}\tau|\zeta|_\hn^2K(\xi,\xi\circ\zeta)\,d\zeta\nonumber\\
            &\qquad \leq c\int_{B(0,\varepsilon)}(|\nabla_\hn u(\xi)\cdot z_\zeta|+\tau|\zeta|_\hn^2)^{p-2}\tau|\zeta|_\hn^2|\zeta|_\hn^{-Q-sp}\,d\zeta\nonumber\\
            &\qquad \leq c\int_{B(0,\varepsilon)}(|\nabla_\hn u(\xi)\cdot z_\zeta|+\tau\|\zeta\|_\hn^2)^{p-2}\tau\|\zeta\|_\hn^2\|\zeta\|_\hn^{-Q-sp}\,d\zeta\nonumber\\
            &\qquad = c\int_0^\varepsilon\int_{S(0,1)}(|\nabla_\hn u(\xi)\cdot z_\sigma|r+\tau r^2)^{p-2}\tau r^2r^{-Q-sp+Q-1}\,dr\,d\sigma\nonumber\\
            &\qquad = c\tau\int_0^\varepsilon\int_{S(0,1)}\left(\frac{|\nabla_\hn u(\xi)\cdot z_\sigma|}{|\nabla_\hn u(\xi)|}+\frac{\tau r}{|\nabla_\hn u(\xi)|}\right)^{p-2}\nonumber\\
            &\hspace{4cm}|\nabla_\hn u(\xi)|^{p-2}\,d\sigma r^{p(1-s)}\frac{dr}{r}\nonumber\\
            &\qquad \leq c\tau\int_0^\varepsilon\left(1+\frac{\tau r}{|\nabla_\hn u(\xi)|}\right)^{p-2}|\nabla_\hn u(\xi)|^{p-2}\,r^{p(1-s)}\frac{dr}{r},
            \end{align} where in the second step, we have used the monotonicity of $(a+b)^{p-2}b$ with respect to $b$ when $a,b\geq 0$ as well as the Taylor's theorem; in the third step, we use the bounds on $K$; in the fourth step, we switch to another homogeneous norm $\|\cdot\|_{\hn}$; in the fifth step, we switch to polar coordinates as defined in \cref{eq:polar}; and in the last step, we use \cref{lem:sphere}.

            If $p\geq 2$, we have from \cref{eq:est1} that
            \begin{align*}
            &\Bigg|\int_{B(\xi,\varepsilon)}|u(\xi)-u(\eta)|^{p-2}(u(\xi)-u(\eta))K(\xi,\eta)\,d\eta\Bigg|\\
            &\qquad \leq c\tau\int_0^\varepsilon\left(1+\frac{\tau^{p-2} r^{p-2}}{|\nabla_\hn u(\xi)|^{p-2}}\right)|\nabla_\hn u(\xi)|^{p-2}\,r^{p(1-s)}\frac{dr}{r}\\
            &\qquad \leq c\tau \sup_{D}|\nabla_{\hn} u(\xi)|^{p-2}\varepsilon^{p(1-s)} + c\tau^{p-1}\varepsilon^{p-2+p(1-s)}.
            \end{align*}

            If $\frac{2}{2-s}<p< 2$, we have from \cref{eq:est1} that
            \begin{align*}
            &\Bigg|\int_{B(\xi,\varepsilon)}|u(\xi)-u(\eta)|^{p-2}(u(\xi)-u(\eta))K(\xi,\eta)\,d\eta\Bigg|\\
            &\qquad \leq c\tau\int_0^\varepsilon\left(\frac{\tau^{p-2} r^{p-2}}{|\nabla_\hn u(\xi)|^{p-2}}\right)|\nabla_\hn u(\xi)|^{p-2}\,r^{p(1-s)}\frac{dr}{r}\\
            &\qquad \leq c\tau^{p-1}\varepsilon^{p-2+p(1-s)}.
            \end{align*}

            Finally, if $1<p\leq \frac{2}{2-s}$ and $D\Subset\{\xi\in\hn : d_u(\xi)>0\}$, then $\inf_D|\nabla_\hn u|>0$ and we have from \cref{eq:est1} that
            \begin{align*}
            &\Bigg|\int_{B(\xi,\varepsilon)}|u(\xi)-u(\eta)|^{p-2}(u(\xi)-u(\eta))K(\xi,\eta)\,d\eta\Bigg|\\
            &\qquad \leq c\tau\int_0^\varepsilon|\nabla_\hn u(\xi)|^{p-2}\,r^{p(1-s)}\frac{dr}{r}\\
            &\qquad \leq c\tau \sup_{D}|\nabla_{\hn} u(\xi)|^{p-2}\varepsilon^{p(1-s)}.
            \end{align*}
            \endgroup
 In all three cases, we obtain the result.
    \end{proof}
    The next result complements the previous lemma by treating the more delicate case where the horizontal gradient vanishes.
   \begin{lemma}\label{lem-pv2} (Principal value estimate near critical points)
Let $1<p \leq \frac{2}{2-s}$, $\mathcal{D} \subset {\Omega}$ and $u \in C^2_\beta(\mathcal{D})$ for $\beta>\frac{sp}{p-1}$. Also, assume that $B_\varepsilon(\xi) \subset \mathcal{D}$ and $\xi \in \mathcal{D}$ such that $d_u(\xi)<\varepsilon<1$. Then we have the following:
\smallskip 
\noindent
\begin{enumerate}
\item [(i)] There is a $C_\varepsilon>0$ (independent of $\xi$) such that \begin{align*}
\Big|\textup{P.V.}\int_{B(\xi,\varepsilon)}|u(\xi)-u(\eta)|^{p-2} ( u(\xi)-u(\eta)) K(\xi,\eta) d\eta \Big| \leq C_\varepsilon.
\end{align*}
\smallskip \noindent
\item [(ii)] $C_\varepsilon \to 0$ as $\varepsilon \to 0$.
\end{enumerate}
   \end{lemma}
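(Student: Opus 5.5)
The plan is to follow the Euclidean argument of \cite{Korven2019} (Lemma 3.6 there) and split the ball $B(\xi,\varepsilon)$ according to the size of $\delta:=d_u(\xi)<\varepsilon$. On the inner ball $B(\xi,\delta/2)$ the gradient is non-degenerate, and we can run the computation \eqref{eq:est1} of \cref{lem-pv1}. On the annulus $B(\xi,\varepsilon)\setminus B(\xi,\delta/2)$ we instead use the crude bound $|u(\xi)-u(\eta)|\lesssim$ something controlled by the $C^2_\beta$ norm, with no cancellation.

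\emph{Pointwise control from the $C^2_\beta$ condition.} Let $M:=\|u\|_{C^2_\beta(\mathcal D)}$ and take $\delta\le 1$. The definition of $C^2_\beta$ then gives
\[
|\nabla_\hn u(\xi)|\ge M^{-1}\delta^{\beta-1},
\qquad
|D^2_\hn u(\eta)|\le M\, d_u(\eta)^{\beta-2}.
\]
Let $\eta_0\in N_u$ be a point with $|\xi^{-1}\circ\eta_0|_\hn=\delta$. For $\eta\in B(\xi,\varepsilon)$ we have $d_u(\eta)\le c(|\xi^{-1}\circ\eta|_\hn+\delta)$. Integrating the Hessian bound along horizontal curves from $\eta_0$ (stratified Taylor, \cite[Theorem~20.3.3]{Bonfig2007}) should give
\[
|\nabla_\hn u(\eta)|\le C(|\xi^{-1}\circ\eta|_\hn+\delta)^{\beta-1},
\]
and then, since $u(\eta_0)$ is a critical value with $\nabla_\hn u(\eta_0)=0$,
\[
|u(\xi)-u(\eta)|\le C(|\xi^{-1}\circ\eta|_\hn+\delta)^{\beta}.
\]
Here $\beta>sp/(p-1)>0$. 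Note also $\beta\ge 2$ is not needed; when $\beta<2$ the Hessian bound blows up near $N_u$, which is why we work at scale $\delta$.

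\emph{Annulus.} For $\delta/2\le r:=|\xi^{-1}\circ\eta|_\hn<\varepsilon$ the last bound gives $|u(\xi)-u(\eta)|^{p-1}\le Cr^{\beta(p-1)}$. Integrating against $K\le\Lambda r^{-Q-sp}$ in polar coordinates \eqref{eq:polar} yields
\[
\int_{\delta/2}^{\varepsilon} r^{\beta(p-1)-sp-1}\,dr\le C\varepsilon^{\beta(p-1)-sp},
\]
which tends to $0$ because $\beta(p-1)>sp$.

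\emph{Inner ball.} On $B(\xi,\delta/2)$ every point has $d_u\ge\delta/2$, so $\tau:=\sup_{B(\xi,\delta/2)}|D^2_\hn u|\le C\delta^{\beta-2}$ and $|\nabla_\hn u(\xi)|\ge c\delta^{\beta-1}$. Inserting these into \eqref{eq:est1}, which uses \cref{lem:affine} to subtract the horizontal affine part and \cref{lem:sphere} for the angular integral, and using $p<2$, we get
\[
C\tau\,|\nabla_\hn u(\xi)|^{p-2}\int_0^{\delta/2}\Big(1+\tfrac{\tau r}{|\nabla_\hn u(\xi)|}\Big)^{p-2} r^{p(1-s)-1}\,dr .
\]
Since $\tau r/|\nabla_\hn u(\xi)|\lesssim r/\delta\le 1$, the bracket is comparable to $1$. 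The whole expression is therefore bounded by
\[
C\delta^{\beta-2}\,\delta^{(\beta-1)(p-2)}\,\delta^{p(1-s)}=C\delta^{\beta(p-1)-sp}\le C\varepsilon^{\beta(p-1)-sp}.
\]
The exponent is positive by the same condition $\beta>sp/(p-1)$. Note that $p(1-s)>0$ always, so the $r$-integral converges at $0$.

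Summing the two pieces gives (i) with $C_\varepsilon=C\varepsilon^{\beta(p-1)-sp}$, independent of $\xi$, and (ii) follows. The main obstacle I expect is the first step: justifying the growth bounds for $\nabla_\hn u$ and $u-u(\xi)$ in terms of $|\xi^{-1}\circ\eta|_\hn+\delta$ on the Heisenberg group. Horizontal segments do not join arbitrary points, so I would integrate along a horizontal (sub-unit, Chow-type) path of length comparable to the gauge distance, staying where $d_u$ is comparable to the distance. If $\beta<2$ and the path passes near $N_u$, the integrability of $d_u^{\beta-2}$ along the path has to be checked; this is fine since $\beta-2>-1$. A secondary technical point is that the inner-ball estimate needs $B(\xi,\delta/2)\subset\{d_u>0\}$ with a uniform lower bound on $|\nabla_\hn u|$, which follows from the triangle inequality in the remark after the pseudo-triangle lemma.
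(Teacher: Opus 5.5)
Your argument is correct, but it is organized differently from the paper's proof. The paper does not split space at scale $d_u(\xi)$. It runs the cancellation computation \eqref{eq:est1} over the whole ball $B(\xi,\varepsilon)$, with the constant $\tau$ replaced by the radius-dependent Hessian bound $(d_u(\xi)+r)^{\beta-2}$ from the $C^2_\beta$ condition. Only then does it split the radial integral at $r=d_u(\xi)$:
\begin{itemize}
\item for $r<d_u(\xi)$ the bracket is at most $1$, and the gradient lower bound gives $d_u(\xi)^{\beta(p-1)-sp}$;
\item for $r>d_u(\xi)$ it uses $(1+X)^{p-2}\le X^{p-2}$, which amounts to bounding by $|u-l|^{p-1}\lesssim r^{\beta(p-1)}$.
\end{itemize}
The case $\nabla_\hn u(\xi)=0$ is treated separately. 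You instead use cancellation only on $B(\xi,\delta/2)$, where the gradient at $\xi$ dominates. On the annulus you drop the affine part and use the growth bound $|u(\xi)-u(\eta)|\lesssim (r+\delta)^\beta$, obtained by Taylor expansion at a nearest critical point $\eta_0$. You reach the same exponent $\beta(p-1)-sp$. Your route also covers $\delta=0$ in the same argument, since the inner ball is then empty and the growth bound gives the correct $r^\beta$ decay at a critical point. The paper's route avoids locating $\eta_0$ and reuses \eqref{eq:est1} almost verbatim.

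One point you missed removes your ``main obstacle''. Since $p\le\frac{2}{2-s}$ is equivalent to $sp\ge 2(p-1)$, you automatically have $\beta>\frac{sp}{p-1}\ge 2$. Hence $d_u^{\beta-2}$ is bounded and vanishes on $N_u$, and there is no path-integrability issue. The stratified Taylor inequality at $\eta_0$, where $\nabla_\hn u(\eta_0)=0$, suffices. On $B(\eta_0,b\rho)$ with $\rho=|\eta_0^{-1}\circ\eta|_\hn$, the horizontal second derivatives are bounded by $M(b\rho)^{\beta-2}$, which immediately gives $|u(\eta)-u(\eta_0)|\le C\rho^{\beta}$. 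This requires the enlarged ball to lie in $\mathcal D$, the same tacit enlargement the paper's Taylor step at $\xi$ needs.

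The same fact corrects your justification on the inner ball. The bound $\tau\lesssim\delta^{\beta-2}$ follows from the upper bound $d_u\le(1+b/2)\delta$ on the Taylor region $B(\xi,b\delta/2)$. It does not follow from $d_u\ge\delta/2$, which points the wrong way when $\beta>2$. This is also exactly what makes the paper's substitution of $(d_u(\xi)+r)^{\beta-2}$ for $\tau$ legitimate.
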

 \begin{proof}
If $\nabla_\hn u (\xi)=0$, the result follows by the use of coarea formula on the Heisenberg group (\cite[Prop~5.4.4]{Bonfig2007}), $u\in C^2_\beta$, $\nabla_\hn u(\xi)=0$ and the stratified Taylor's formula on the Heisenberg group (\cite[Theorem~20.3.3]{Bonfig2007}), which implies
        \begin{align*}
            |u(\xi)-u(\eta)| \leq C\dhn{\eta}{\xi}^2d_u(\xi)^{\beta-2}
        \end{align*} for some constant $C$.
        Hence, it is enough to consider only the case $\nabla_\hn u\neq 0$.
        Let $l(\eta) = u(\xi) +\nabla_\hn(\xi)\cdot (z_\eta-z_\xi)$ be the horizontal affine part of $u$ near $\xi$. Let $g(t)=|t|^{p-2} t$, we have
        \begingroup
        \allowdisplaybreaks
        \begin{align*}
            &\Bigg|\int_{B(\xi,\varepsilon)}|u(\xi)-u(\eta)|^{p-2}(u(\xi)-u(\eta))K(\xi,\eta)\,d\eta\Bigg|\\
            &\qquad\leq \int_{B(\xi,\varepsilon)}|g(u(\xi)-u(\eta))-g(l(\xi)-l(\eta))|K(\xi,\eta)\,d\eta\\
            &\qquad\leq \int_{B(\xi,\varepsilon)}(|l(\xi)-l(\eta)|+|u(\eta)-l(\eta)|)^{p-2}|u(\eta)-l(\eta)|K(\xi,\eta)\,d\eta\\
            &\qquad = c\int_{B(\xi,\varepsilon)}(|\nabla_\hn u(\xi)\cdot (z_\eta-z_\xi)|+|u(\eta)-l(\eta)|)^{p-2}|u(\eta)-l(\eta)|K(\xi,\eta)\,d\eta,
        \end{align*}\endgroup where, in the first step, we use \cref{lem:affine}, and in the second step, we use \cref{lem:algebraic}.

        Now, by left-translation, we have, with $\tau:=\sup_D\{X_iX_ju(\xi):1\leq i,j\leq 2N\}$,
        \begingroup
        \allowdisplaybreaks
        \begin{align}\label{eq:est2}
            &\Bigg|\int_{B(\xi,\varepsilon)}|u(\xi)-u(\eta)|^{p-2}(u(\xi)-u(\eta))K(\xi,\eta)\,d\eta\Bigg|\nonumber\\
            &\qquad \leq c\int_{B(0,\varepsilon)}\left(|\nabla_\hn u(\xi)\cdot (z_{\xi\circ\zeta}-z_\xi)|+|u(\xi\circ\zeta)-l(\xi\circ\zeta)|\right)^{p-2}\nonumber\\
            &\hspace{3cm}|u(\xi\circ\zeta)-l(\xi\circ\zeta)|K(\xi,\xi\circ\zeta)\,d\zeta\nonumber\\
            &\qquad \leq c\int_{B(0,\varepsilon)}(|\nabla_\hn u(\xi)\cdot (z_{\xi\circ\zeta}-z_\xi)|+\tau|\zeta|_\hn^2)^{p-2}\tau|\zeta|_\hn^2K(\xi,\xi\circ\zeta)\,d\zeta\nonumber\\
            &\qquad \leq c\int_{B(0,\varepsilon)}(|\nabla_\hn u(\xi)\cdot z_\zeta|+\tau|\zeta|_\hn^2)^{p-2}\tau|\zeta|_\hn^2|\zeta|_\hn^{-Q-sp}\,d\zeta\nonumber\\
            &\qquad \leq c\int_{B(0,\varepsilon)}(|\nabla_\hn u(\xi)\cdot z_\zeta|+\tau\|\zeta\|_\hn^2)^{p-2}\tau\|\zeta\|_\hn^2\|\zeta\|_\hn^{-Q-sp}\,d\zeta\nonumber\\
            &\qquad = c\int_0^\varepsilon\int_{S(0,1)}(|\nabla_\hn u(\xi)\cdot z_\sigma|r+\tau r^2)^{p-2}\tau r^2r^{-Q-sp+Q-1}\,dr\,d\sigma\nonumber\\
            &\qquad = c\int_0^\varepsilon\int_{S(0,1)}\left(\frac{|\nabla_\hn u(\xi)\cdot z_\sigma|}{|\nabla_\hn u(\xi)|}+\frac{\tau r}{|\nabla_\hn u(\xi)|}\right)^{p-2}\nonumber\\
            &\hspace{5cm}\tau|\nabla_\hn u(\xi)|^{p-2}\,d\sigma\,r^{p(1-s)}\frac{dr}{r}\nonumber\\
            &\qquad \leq c\int_0^\varepsilon\int_{S(0,1)}\left(\frac{|\nabla_\hn u(\xi)\cdot z_\sigma|}{|\nabla_\hn u(\xi)|}+\frac{(d_u(\xi)+r)^{\beta-2} r}{|\nabla_\hn u(\xi)|}\right)^{p-2}\nonumber\\
            &\hspace{4cm}(d_u(\xi)+r)^{\beta-2}|\nabla_\hn u(\xi)|^{p-2}\,d\sigma\,r^{p(1-s)}\frac{dr}{r}\nonumber\\
            &\qquad \leq c\int_0^\varepsilon\left(1+\frac{(d_u(\xi)+r)^{\beta-2} r}{|\nabla_\hn u(\xi)|}\right)^{p-2}(d_u(\xi)+r)^{\beta-2}|\nabla_\hn u(\xi)|^{p-2}\,r^{p(1-s)}\frac{dr}{r},
            \end{align}\endgroup where in the second step, we have used the monotonicity of $(a+b)^{p-2}b$ with respect to $b$ when $a,b\geq 0$ as well as the Taylor's theorem; in the third step, we use the bounds on $K$; in the fourth step, we switch to another homogeneous norm $\|\cdot\|_{\hn}$; in the fifth step, we switch to polar coordinates as defined in \cref{eq:polar}; in the seventh step, we use the definition of $C^2_{\beta}$; and in the last step, we use \cref{lem:sphere}.
            Now, we split the last integral into two parts:
            \begin{align*}
            &\Bigg|\int_{B(\xi,\varepsilon)}|u(\xi)-u(\eta)|^{p-2}(u(\xi)-u(\eta))K(\xi,\eta)\,d\eta\Bigg|\\
            & \leq c\int_0^{d_u(\xi)}\left(1+\frac{(d_u(\xi)+r)^{\beta-2} r}{|\nabla_\hn u(\xi)|}\right)^{p-2}(d_u(\xi)+r)^{\beta-2}|\nabla_\hn u(\xi)|^{p-2}\,r^{p(1-s)}\frac{dr}{r}\\
            &\quad+c\int_{d_u(\xi)}^\varepsilon\left(1+\frac{(d_u(\xi)+r)^{\beta-2} r}{|\nabla_\hn u(\xi)|}\right)^{p-2}(d_u(\xi)+r)^{\beta-2}|\nabla_\hn u(\xi)|^{p-2}\,r^{p(1-s)}\frac{dr}{r}\\
            &:=I_1+I_2
            \end{align*}
            The integral $I_1$ is estimated as
            \begingroup
        \allowdisplaybreaks
        \begin{align*}
                I_1&\leq c\int_0^{d_u(\xi)}d_u(\xi)^{\beta-2}|\nabla_\hn u(\xi)|^{p-2}\,r^{p(1-s)}\frac{dr}{r}\\
                &\leq cd_u(\xi)^{\beta-2}d_u(\xi)^{(\beta-2)(p-2)}d_u(\xi)^{p(1-s)}\\
                &\leq c\varepsilon^{\beta(p-1)-sp}
            \end{align*}
            \endgroup
            where we make use of the lower bound on $|\nabla_\hn u|$ in the definition of $C^2_\beta$. The exponent on $\varepsilon$ is positive since $\beta>\tfrac{sp}{p-1}$.
            For the integral $I_2$, we have
            \begin{align*}
                I_2&\leq c\int_{d_u(\xi)}^\varepsilon \left(\frac{r^{\beta-1}}{|\nabla_\hn u(\xi)|}\right)^{p-2} r^{\beta-2} |\nabla_\hn u(\xi)|^{p-2} r^{p(1-s)}\frac{dr}{r}\\
                &\leq c\int_{d_u(\xi)}^\varepsilon r^{\beta(p-1)-sp}\frac{dr}{r}\\
                &\leq c \varepsilon^{\beta(p-1)-sp}.
            \end{align*}
            The required estimate is found by combining the two estimates for $I_1$ and $I_2$.            
 \end{proof}
 \subsection{Continuity properties} Combining the previous estimates, we establish the continuity properties of the nonlocal operator under our structural assumptions.
 \begin{lemma}\label[lemma]{lem-cont1}(Continuity of the nonlocal operator)
Let $\xi_0 \in \Omega$ and $r>0$ be such that $B(\xi_0,r) \subset \Omega$ and $\phi \in L^{p-1}_{sp}(\mathbb{H}^N) \cap C^2(B_r(\xi_0))$. If $1<p \leq \frac{2}{2-s}$ and $\nabla_\hn u(\xi_0)=0$, further assume that $\phi \in C^2_{\beta}(B_r(\xi_0))$ for $\beta>\frac{sp}{p-1}$. Then, $\mathcal{L}\phi$ is continuous in $B_r(\xi_0)$.
 \end{lemma}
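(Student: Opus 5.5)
We prove continuity at an arbitrary point $\xi_1\in B_r(\xi_0)$ by the standard three-piece splitting, following \cite{Korven2019}. Fix $\xi_1$ and take a sequence $\xi_k\to\xi_1$. Choose $\delta>0$ small enough that $B(\xi_1,2\delta)\subset B_r(\xi_0)$; we may assume every $\xi_k\in B(\xi_1,\delta/2)$. For $0<\varepsilon<\delta/2$ split
\begin{align*}
\mathcal{L}\phi(\xi_k)&=2\,\textup{P.V.}\int_{B(\xi_k,\varepsilon)}g(\phi(\xi_k)-\phi(\eta))K(\xi_k,\eta)\,d\eta+2\int_{B(\xi_1,\delta)\setminus B(\xi_k,\varepsilon)}g(\phi(\xi_k)-\phi(\eta))K(\xi_k,\eta)\,d\eta\\
&\quad+2\int_{\hn\setminus B(\xi_1,\delta)}g(\phi(\xi_k)-\phi(\eta))K(\xi_k,\eta)\,d\eta=:I_1^k+I_2^k+I_3^k,
\end{align*}
with $g(t)=|t|^{p-2}t$, and split $\mathcal{L}\phi(\xi_1)$ in the same way.

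\textbf{Near part.} When $p>\frac{2}{2-s}$, or when $\nabla_\hn\phi(\xi_1)\neq0$ (then $\nabla_\hn\phi\neq0$ on a small neighbourhood $\mathcal{D}$, so $\mathcal{D}\Subset\{d_\phi>0\}$), \cref{lem-pv1} applied on $\mathcal{D}=B(\xi_1,\delta)$ gives $|I_1^k|+|I_1|\le 2C_\varepsilon$ uniformly in $k$, with $C_\varepsilon\to0$. In the remaining case $1<p\le\frac{2}{2-s}$ with $\xi_1$ a critical point, we use $\phi\in C^2_\beta$. For each $\xi_k$ either $d_\phi(\xi_k)<\varepsilon$, and \cref{lem-pv2} applies, or $d_\phi(\xi_k)\ge\varepsilon$. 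In the latter case the estimate \eqref{eq:est2} must be redone with the lower bound $|\nabla_\hn\phi(\xi_k)|\ge c\,\varepsilon^{\beta-1}$ and $|D^2\phi|\le c\,d_\phi^{\beta-2}\le c\,\varepsilon^{\beta-2}$; the same computation again yields a bound $c\,\varepsilon^{\beta(p-1)-sp}$. Either way the constant is independent of $k$.

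\textbf{Middle and far parts.} On the annulus the integrand is bounded, since $\phi$ is $C^2$, hence bounded, on $B(\xi_1,\delta)$, and $K(\xi_k,\eta)\le\Lambda\,|\xi_k^{-1}\circ\eta|^{-Q-sp}\le\Lambda\varepsilon^{-Q-sp}$ there. Continuity of $\phi$ and of $K$ (via left-invariance, $K(\xi,\eta)=K(0,\xi^{-1}\circ\eta)$, together with the continuity assumption) gives pointwise convergence of $\chi_{B(\xi_1,\delta)\setminus B(\xi_k,\varepsilon)}\,g(\cdot)K$ off the null sphere $\partial B(\xi_1,\varepsilon)$. Dominated convergence then yields $I_2^k\to I_2$. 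For $I_3^k$, take $\eta\notin B(\xi_1,\delta)$; the triangle inequality gives $|\xi_k^{-1}\circ\eta|\ge\frac12|\xi_1^{-1}\circ\eta|$, so the integrand is dominated by $C\big(\sup_{B(\xi_1,\delta)}|\phi|^{p-1}+|\phi(\eta)|^{p-1}\big)(1+|\eta|_\hn)^{-Q-sp}$. This majorant is integrable because $\phi\in L^{p-1}_{sp}(\hn)$, and dominated convergence gives $I_3^k\to I_3$.

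Consequently $\limsup_k|\mathcal{L}\phi(\xi_k)-\mathcal{L}\phi(\xi_1)|\le 4C_\varepsilon$, and letting $\varepsilon\to0$ proves the continuity. The main obstacle is the uniformity of the near-part bound in the critical case with $d_\phi(\xi_k)\ge\varepsilon$, which neither \cref{lem-pv1} nor \cref{lem-pv2} covers verbatim. There we rely on the $C^2_\beta$ bounds making the $I_2$-type estimate in the proof of \cref{lem-pv2} work with $d_\phi$ replaced by $\varepsilon$.
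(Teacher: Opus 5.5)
Your proposal is correct and follows the paper's proof in substance. The principal value over a small ball around the moving point is made uniformly small by \cref{lem-pv1} or \cref{lem-pv2}. The remainder converges by dominated convergence, using continuity of $K$ off the diagonal and $\phi\in L^{p-1}_{sp}(\hn)$. Your split of the remainder into a moving annulus and a fixed far region is a more explicit version of the paper's single majorant $\mathbbm{1}_{\hn\setminus B(\xi,2\rho/3)}(\zeta)\bigl(\sup_{B(\xi,\rho/3)}|\phi|^{p-1}+|\phi(\zeta)|^{p-1}\bigr)|\xi^{-1}\circ\zeta|_{\hn}^{-Q-sp}$.

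The ``main obstacle'' you identify does not arise, and your workaround for it is incorrect as written.

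\emph{Why the case is not needed.} If $\nabla_\hn\phi(\xi_1)=0$, then $\xi_1\in N_\phi$, so $d_\phi(\xi_k)\le|\xi_1^{-1}\circ\xi_k|_{\hn}\to0$. Hence, for each fixed $\varepsilon$, all but finitely many $\xi_k$ satisfy $d_\phi(\xi_k)<\varepsilon$. For those $k$, \cref{lem-pv2} applies verbatim, and only they enter the $\limsup$. This is exactly the observation the paper uses: if the gradient vanishes at the base point, then $d_\phi<\rho$ at all nearby points.

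\emph{Why the workaround fails.} The step $d_\phi^{\beta-2}\le\varepsilon^{\beta-2}$ goes the wrong way. The condition $p\le\frac{2}{2-s}$ is equivalent to $\frac{sp}{p-1}\ge 2$, so $\beta>2$. On $B(\xi_k,\varepsilon)$ with $d_\phi(\xi_k)\ge\varepsilon$, you therefore only get $|D^2_\hn\phi|\lesssim(2d_\phi(\xi_k))^{\beta-2}$, which is not bounded by $\varepsilon^{\beta-2}$. The case could be repaired by carrying $d_\phi(\xi_k)$ through the estimate together with $|\nabla_\hn\phi(\xi_k)|\gtrsim\min\{d_\phi(\xi_k),1\}^{\beta-1}$. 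It is simpler to drop it.
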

 \begin{proof}
    Let $\xi,\eta\in B(\xi_0,r)$. We will prove that for all $\varepsilon>0$, there is a $\delta>0$ such that whenever $\dhn{\xi}{\eta}< \delta$, we have $|\mathcal{L}\phi (\xi)-\mathcal{L}\phi (\eta)|<\varepsilon$. For this purpose, we will consider the local and the nonlocal parts of $\mathcal{L}$ separately.

    For the local part, in the case that $p>\tfrac{2}{2-s}$, given an $\varepsilon>0$ by \cref{lem-pv1}, there is a $\rho>0$ such that 
    \begin{align}\label{eq:est2.5}
        \Big|\textup{P.V.}\int_{B(\xi,\rho)}|u(\xi)-u(\zeta)|^{p-2} ( u(\xi)-u(\zeta)) K(\xi,\zeta) d\zeta \Big| < \varepsilon/4,\mbox{ and }\nonumber\\
        \Big|\textup{P.V.}\int_{B(\eta,\rho)}|u(\eta)-u(\zeta)|^{p-2} ( u(\eta)-u(\zeta)) K(\eta,\zeta) d\zeta \Big| < \varepsilon/4.
    \end{align}
    On the other hand, if $1<p\leq \tfrac{2}{2-s}$ and $\nabla_\hn u(\xi_0)=0$, there are two possibilities, either $\nabla_\hn u(\xi)\neq 0$ or $\nabla_\hn u(\xi)=0$. In the first case, there is a $\delta>0$ such that $\nabla_\hn u(\eta)\neq 0$ whenever $\dhn{\xi}{\eta}< \delta$. Then, again, by \cref{lem-pv1}, we can choose a $\rho>0$ such that \cref{eq:est2.5} holds. If $\nabla_\hn u(\xi)=0$, then $d_\phi(\eta)<\rho$ when $\dhn{\xi}{\eta}< \rho$ and again \cref{eq:est2.5} holds by Lemma \ref{lem-pv2}.
    For the nonlocal part, we further restrict $\dhn{\xi}{\eta}<\rho/3$. Then we have the following chain of estimates
    \begin{align*}
        &\mathbbm{1}_{\hn\setminus B(\eta,\rho)}(\zeta)|\phi(\eta)-\phi(\zeta)|^{p-1}K(\eta,\zeta)\\
        &\qquad \leq c\mathbbm{1}_{\hn\setminus B(\eta,\rho)}(\zeta)(|\phi(\eta)|^{p-1}+|\phi(\zeta)|^{p-1})\dhn{\eta}{\zeta}^{-Q-sp}\\
        &\qquad \leq c\mathbbm{1}_{\hn\setminus B(\xi,2\rho/3)}(\zeta)\left(\sup_{B(\xi,\rho/3)}|\phi(\cdot)|^{p-1}+|\phi(\zeta)|^{p-1}\right)\dhn{\xi}{\zeta}^{-Q-sp}.
    \end{align*}
    Therefore, by an application of dominated convergence theorem along with continuity of $K$ away from the diagonal as well as $\phi\in L^{p-1}_{sp}$, we have
    \begin{align}
        \label{eq:est3}
        &\int_{\hn\setminus B(\eta,\rho)} |\phi(\eta)-\phi(\zeta)|^{p-2}(\phi(\eta)-\phi(\zeta))K(\eta,\zeta)\,d\zeta\nonumber\\
        &\qquad\rightarrow \int_{\hn\setminus B(\xi,\rho)} |\phi(\xi)-\phi(\zeta)|^{p-2}(\phi(\xi)-\phi(\zeta))K(\xi,\zeta)\,d\zeta
    \end{align} as $\dhn{\xi}{\eta}\to 0$.
    Combining \cref{eq:est2.5} and \cref{eq:est3}, we have the required continuity result.
 \end{proof}
 The following perturbation result will be used in the proof of the comparison principle.
 \begin{lemma}\label[lemma]{lem-bound} (Perturbation lemma)
Let $\xi_0 \in \Omega$ and $r>0$ be such that $B(\xi_0,r) \subset \Omega$. Let $\phi \in L^{p-1}_{sp}(\mathbb{H}^N) \cap C^2(B(\xi_0,r))$ be such that the condition 3(a) or 3(b) of Definition \ref{def-visc} is satisfied. Then, for every $\eps>0$ and $\rho'>0$, there exists $\theta'>0, \,\, \rho \in (0, \rho')$ and $\eta \in C^2_0(B(\xi_0,\tfrac{\rho}{2}))$ satisfying $0 \leq \eta \leq 1$ and $\eta(\xi_0)=1$ such that the function $\phi_\theta:=\phi+\theta \eta$ satisfies
$$\sup_{B(\xi_0,\rho)}|\mathcal{L} \phi -\mathcal{L} \phi_\theta|<\varepsilon \,\text{ for } 0 \leq \theta<\theta'. $$
 \end{lemma}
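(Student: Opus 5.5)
This is the Heisenberg analogue of the perturbation lemma in \cite{Korven2019}, and the plan is to follow its structure. The first step is to choose the bump $\eta$ adapted to $\phi$ at $\xi_0$, so that $\phi_\theta$ stays in the admissible class uniformly in $\theta$.

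\textbf{Choice of $\eta$.} In case 3(a) with $\nabla_\hn\phi(\xi_0)\neq0$, or with $p>\frac{2}{2-s}$, any fixed cutoff will do. I take $\eta(\xi)=\chi(|\xi_0^{-1}\circ\xi|_\hn^4/\rho^4)$ with $\chi\in C^\infty_0([0,1/16))$, $0\le\chi\le1$ and $\chi(0)=1$. Using the fourth power of the gauge makes $\eta$ smooth, since $|\cdot|_\hn^4$ is a polynomial. If $\nabla_\hn\phi(\xi_0)\ne0$, I first pick $\rho<\rho'$ so small that $|\nabla_\hn\phi|\ge c_0>0$ on $B(\xi_0,\rho)$. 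Then for $\theta<\theta_1$ we also have $|\nabla_\hn\phi_\theta|\ge c_0/2$, so \cref{lem-pv1} applies to all $\phi_\theta$ with constants uniform in $\theta$.

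In case 3(b), $\xi_0$ is an isolated critical point and $\phi\in C^2_\beta$. Here I would take $\eta=\chi\big(|\xi_0^{-1}\circ\xi|_\hn^{M}/\rho^M\big)$ with $M\ge\beta$ large, and $\chi$ constant near $0$. I then need to check that $\phi_\theta\in C^2_\beta(B(\xi_0,\rho))$ with norm bounded uniformly for $\theta$ small, and that its critical set is still $\{\xi_0\}$ near $\xi_0$. This holds because $\eta$ is flat at $\xi_0$ to high order, so $\theta\nabla_\hn\eta$ is dominated by $|\nabla_\hn\phi|\gtrsim d^{\beta-1}$.

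\textbf{Splitting.} For $\xi\in B(\xi_0,\rho)$ and a small $\delta>0$, I split $\mathcal{L}\phi(\xi)-\mathcal{L}\phi_\theta(\xi)$ into the integral over $B(\xi,\delta)$ and the integral over its complement. The near part is at most the sum of the two principal values over $B(\xi,\delta)$. By \cref{lem-pv1}, or \cref{lem-pv2} together with the uniform $C^2_\beta$ bound, each is bounded by $C_\delta$ with $C_\delta$ independent of $\theta$ and of $\xi$. So I fix $\delta$ with $2C_\delta<\varepsilon/2$.

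For the far part, since $\eta$ is supported in $B(\xi_0,\rho/2)$, \cref{lem:algebraic}(2) gives
\[
\bigl|g(\phi_\theta(\xi)-\phi_\theta(\zeta))-g(\phi(\xi)-\phi(\zeta))\bigr|\le C\bigl(|\phi(\xi)-\phi(\zeta)|+2\theta\bigr)^{p-2}2\theta \quad\text{when } p\ge2,
\]
and $\le C\theta^{p-1}$ when $p<2$. Integrating against $K(\xi,\zeta)\le\Lambda|\xi^{-1}\circ\zeta|_\hn^{-Q-sp}$ over $\hn\setminus B(\xi,\delta)$, and using $\phi\in L^{p-1}_{sp}$ together with $\sup_{B(\xi_0,\rho)}|\phi|<\infty$, gives a bound $C(\delta,\phi)(\theta+\theta^{p-1})$ uniform in $\xi$. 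For $p\ge2$ the tail factor is handled by H\"older's inequality, after splitting according to whether $|\phi(\zeta)|$ is large. Choosing $\theta'$ small then makes this less than $\varepsilon/2$.

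\textbf{Main obstacle.} The hard step is the uniform $C^2_\beta$ control of $\phi_\theta$ in case 3(b). I need $d_{\phi_\theta}=d_\phi$ near $\xi_0$ and the gradient lower bound $|\nabla_\hn\phi_\theta(\xi)|\ge c\min\{d,1\}^{\beta-1}$, even though horizontal derivatives of the gauge are only homogeneous. My first attempt is to choose $\eta$ constant on $B(\xi_0,\rho/4)$. Then $\nabla_\hn\phi_\theta=\nabla_\hn\phi$ there, and on the annulus $\rho/4\le|\xi_0^{-1}\circ\xi|_\hn\le\rho/2$ the quantity $|\nabla_\hn\phi|$ is bounded below by a positive constant, because $\xi_0$ is isolated. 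Hence for $\theta<\theta'$ small the perturbation creates no new critical points and preserves the $C^2_\beta$ bounds.
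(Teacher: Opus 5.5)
Your proposal is correct and follows essentially the same route as the paper. Both use an adapted bump, constant on $B(\xi_0,\rho/4)$ in case 3(b), so that $d_{\phi_\theta}=d_\phi$ and the $C^2_\beta$ norm stays bounded uniformly in $\theta$; both control the near part uniformly via \cref{lem-pv1}/\cref{lem-pv2}, and the far part via \cref{lem:algebraic}, the tail and H\"older. The only detail to tidy is to secure the gradient lower bound (resp.\ isolatedness of $\xi_0$) on an enlarged ball such as $B(\xi_0,2\rho)$, as the paper does, so that the balls $B(\xi,\delta)$ with $\xi\in B(\xi_0,\rho)$ stay in the region where these lemmas apply.
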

 \begin{proof}
    Let $\eps>0$ and $\rho'>0$. There are three cases to consider. If $p>\tfrac{2}{2-s}$, we obtain, by \cref{lem-pv1} for sufficiently small $\delta>0$:
    \begin{align}\label{eq:est3.5}
        \Big| \textup{P.V.} \int_{B(\xi,\varepsilon)}|\phi_\theta(\xi)-\phi_\theta(\eta)|^{p-2} ( \phi_\theta(\xi)-\phi_\theta(\eta)) K(\xi,\eta) d\eta \Big| <\frac\varepsilon 4.
    \end{align}
    On the other hand, if $\nabla_\hn\phi(\xi_0)\neq 0$, there is a $\rho\in(0,\rho')$ and $\tau>0$ such that $|\nabla_\hn\phi(\xi)|>\tau$ in $B(\xi_0,2\rho)$ by continuity. Now, choosing any $\eta\in C^2_0(B(\xi_0,\rho/2))$ satisfying $0\leq \eta\leq 1$ with $\eta(\xi_0)=1$, there is a sufficiently small $\theta''>0$ such that for all $0\leq \theta<\theta''$, we have $|\nabla_\hn\phi_\theta|>\tau/2$ in $B(\xi_0,2\rho)$. Thus, in this case, \cref{eq:est3.5} still holds by \cref{lem-pv1} for a possibly smaller choice of $\delta$. Finally, when $1<p\leq\tfrac2{2-s}$, $\nabla_\hn\phi(\xi_0)=0$, and $\xi_0$ is an isolated critical point of $\phi$, we can again choose a possibly smaller $\rho>0$ so that $|\nabla_\hn\phi|\neq 0$ in $B(\xi_0,3\rho)\setminus\{\xi_0\}$. Choose $\eta$ such that $\eta\in C^2_0(B(\xi_0,\tfrac\rho2))$,  $0\leq \eta\leq 1$, $\eta=1$ in $B(\xi_0,\tfrac\rho4)$ and $|D^2_{\hn}\eta|\leq M\,d_{\eta}^{\beta-2}$ for some constant $M>0$. 
    Once again, by choosing a sufficiently small $\theta''$, for $0\leq\theta<\theta''$, we can ensure $\nabla\phi_\theta\neq 0$ in $B(\xi_0,2\rho)\setminus\{\xi_0\}$ and therefore, $d_{\phi_\theta}=d_{\phi}$ in $B(\xi_0,\rho)$ for this range of $\theta$. Moreover, with a still possibly smaller choice of $\theta$, we may have $\tfrac12|\nabla\phi|\leq|\nabla\phi_\theta|\leq 2|\nabla\phi|$ in $B(\xi_0,\rho)$. Finally, we can estimate
    \begin{align*}
        |D^2_{\hn}\phi_\theta|\leq |D^2_{\hn}\phi|+\theta|D^2_{\hn}\eta|\leq \|\phi\|_{C^2_\beta(B(\xi_0,\rho))}d_\phi^{\beta-2} + \theta M d_\eta^{\beta-2} \leq cd^{\beta-2}_{\phi_\theta}
    \end{align*} in $B(\xi_0,\rho)$ for small $\theta$ since $d_\eta\leq d_\phi=d_{\phi_\theta}$ in $B(\xi_0,\rho)$. As a result of these observations and choices, $\phi_\theta\in C^2_\beta(B(\xi_0,\rho))$ and by an application of \cref{lem-pv2}, we can find $\delta\in (0,\rho)$ such that \cref{eq:est3.5} holds in this case.
    Let $\xi\in B(\xi_0,\rho)$. With $g(t)=|t|^{p-2}t$, we can estimate using \cref{eq:est3.5} and \cref{lem:algebraic} that
    \begin{align*}
        &|\mathcal{L}\phi(\xi)-\mathcal{L}\phi_\theta(\xi)|\\
        &\qquad\leq \frac\varepsilon2 + \int_{\hn\setminus B(\xi,\delta)} |g(\phi(\xi)-\phi(\eta))-g(\phi_\theta(\xi)-\phi_\theta(\eta))|K(\xi,\eta)\,d\eta\\
        &\qquad \leq \frac\varepsilon2 + c\int_{\hn\setminus B(\xi,\delta)} 2\theta\left(|\phi(\xi)-\phi(\eta)|+2\theta\right)^{p-2}\dhn{\xi}{\eta}^{-Q-sp}\,d\eta,
    \end{align*} where apart from the monotonicity of $(a+b)^{p-2}b$ with respect to $b$ for $a,b\geq 0$, we also make use of
    \begin{align*}
        |\phi(\xi)-\phi(\eta)-\phi_\theta(\xi)+\phi_\theta(\eta)|\leq |\phi(\xi)-\phi_\theta(\xi)|+|\phi(\eta)-\phi_\theta(\eta)|\leq 2\theta.
    \end{align*}
    Now, if $p\in (1,2)$, the estimate is completed simply, as follows
    \begin{align*}
        |\mathcal{L}\phi(\xi)-\mathcal{L}\phi_\theta(\xi)| &\leq \frac\varepsilon2 + c\theta^{p-1}\int_{\hn\setminus B(\xi,\delta)} \dhn{\xi}{\eta}^{-Q-sp}\,d\eta\\
        & \leq \frac\varepsilon2 + c\delta^{-sp}\theta^{p-1} <\varepsilon,
    \end{align*} when $\theta$ is sufficiently small.
    On the other hand, for $p\in [2,\infty)$, we obtain
    \begin{align*}
        &|\mathcal{L}\phi(\xi)-\mathcal{L}\phi_\theta(\xi)|\\
        &\qquad \leq \frac\varepsilon2 + c\int_{\hn\setminus B(\xi,\delta)} \theta\left(|\phi(\xi)|^{p-2}+|\phi(\eta)|^{p-2}+\theta^{p-2}\right)\dhn{\xi}{\eta}^{-Q-sp}\,d\eta\\
        &\qquad\leq \frac\varepsilon2 + c\delta^{-sp}\theta^{p-1} + c\delta^{-sp}\theta \sup_{B(\xi_0,\rho)}|\phi|^{p-2} + c\delta^{-sp}\theta\sup_{\zeta\in B(\xi_0,\rho)}\textup{Tail}(\phi;\zeta,\delta)^{p-2},
    \end{align*} where we used H\"older's inequality to estimate
    \begin{align*}
        &\int_{\hn\setminus B(\xi,\delta)} |\phi(\eta)|^{p-2}\dhn{\xi}{\eta}^{-Q-sp}\,d\eta\\
        &\qquad\leq\left(\int_{\hn\setminus B(\xi,\delta)} \frac{1}{\dhn{\xi}{\eta}^{Q+sp}}\,d\eta
\right)^{\frac{1}{p-1}}\left(\int_{\hn\setminus B(\xi,\delta)} \frac{|\phi(\eta)|^{p-1}}{\dhn{\xi}{\eta}^{Q+sp}}\,d\eta\right)^{\frac{p-2}{p-1}}\\
&\qquad \leq c\delta^{sp/(p-1)}\delta^{-sp(p-2)/(p-1)}\textup{Tail}(\phi;\xi,\delta)^{p-2}\\
&\qquad \leq c\delta^{-sp} \sup_{\zeta\in B(\xi_0,\rho)}\textup{Tail}(\phi;\xi,\delta)^{p-2}.
    \end{align*}
    Thus we get 
    \begin{align*}
                |\mathcal{L}\phi(\xi)-\mathcal{L}\phi_\theta(\xi)|<\varepsilon
    \end{align*} when $\theta$ is small. The claim in the lemma follows by taking supremum over $\xi\in B(\xi_0,\rho)$.
 \end{proof}
 We conclude the preliminary section by showing that viscosity supersolutions satisfy the corresponding pointwise inequality for suitable test functions.
 \begin{lemma}\label[lemma]{lem-cont2}(Pointwise supersolutions are weak supersolutions)
Let $\xi_0 \in \Omega$ and $r_0>0$ be such that $B_r(\xi_0) \subset \Omega$. Let $u \in L^{p-1}_{sp}(\mathbb{H}^N) \cap C^2(B_r(\xi_0))$ and if $1<p \leq \frac{2}{2-s}$ and $
\nabla_
\hn u(\xi_0)=0$, then we assume there exists $
\beta>
\tfrac{sp}{p-1}$ such that $u \in C^2_{\beta}(B_r(\xi_0))$. If $\mathcal{L} u(\xi) \geq (\leq) f(\xi)$ for all $\xi \in B_r(\xi_0)$, then $u$ is a continuous weak supersolution (subsolution) in $B_r(\xi_0)$.
 \end{lemma}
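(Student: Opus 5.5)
The plan is to check the weak formulation directly against test functions $\psi\in C_0^\infty(B_r(\xi_0))$, $\psi\ge 0$. The approach is to truncate the kernel near the diagonal, apply Fubini/symmetrization, and then pass to the limit using the uniform principal value bounds of \cref{lem-pv1} and \cref{lem-pv2}.

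First, the regularity requirements. Since $u\in C^2(B_r(\xi_0))$, we have $|u(\xi)-u(\eta)|\le C|\eta^{-1}\circ\xi|_\hn$ on compact subsets, measured in the homogeneous metric, because horizontal Lipschitz bounds follow from the stratified mean value theorem. Since $sp<p$, this gives $u\in W^{s,p}_{\rm loc}(B_r(\xi_0))$. The condition $u\in L^{p-1}_{sp}(\hn)$ is assumed, and continuity is immediate.

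Second, fix $\psi$ with support in a compact set $D\Subset B_r(\xi_0)$, and set for $\varepsilon>0$
\[
\mathcal{L}_\varepsilon u(\xi):=2\int_{\hn\setminus B(\xi,\varepsilon)}|u(\xi)-u(\eta)|^{p-2}(u(\xi)-u(\eta))K(\xi,\eta)\,d\eta .
\]
For fixed $\varepsilon$, the integrand $|u(\xi)-u(\eta)|^{p-1}K\,\mathbbm{1}_{\{|\eta^{-1}\circ\xi|\ge\varepsilon\}}|\psi(\xi)|$ is integrable on $\hn\times\hn$. This uses boundedness of $u$ on $D$ together with the tail condition, exactly as in the nonlocal estimate of \cref{lem-cont1}. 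So Fubini applies. Interchanging $\xi\leftrightarrow\eta$ and using the symmetry of $K$ and of the truncation set, we get
\[
\int_\hn \mathcal{L}_\varepsilon u(\xi)\psi(\xi)\,d\xi=\iint_{\{|\eta^{-1}\circ\xi|_\hn\ge\varepsilon\}}|u(\xi)-u(\eta)|^{p-2}(u(\xi)-u(\eta))(\psi(\xi)-\psi(\eta))K(\xi,\eta)\,d\eta\,d\xi .
\]
One subtlety: when $\xi\notin\operatorname{supp}\psi$ but $\eta\in\operatorname{supp}\psi$, the $\xi$ variable ranges over all of $\hn$. Integrability there comes from applying the same tail bound with the roles of the variables swapped.

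Third, let $\varepsilon\to0$. On the right-hand side, the integrand is dominated near the diagonal by $C|\eta^{-1}\circ\xi|^{p-1+1}K\le C|\eta^{-1}\circ\xi|^{p-Q-sp}$, which is locally integrable. Away from the diagonal, the same tail bounds apply. Dominated convergence then yields the full weak-form integral. On the left-hand side, write $\mathcal{L}_\varepsilon u=\mathcal{L}u-2\,\mathrm{P.V.}\int_{B(\xi,\varepsilon)}(\cdots)$. By \cref{lem-pv1} in the case $p>\frac{2}{2-s}$ or away from critical points, and by \cref{lem-pv2} near $\xi_0$ in the singular case with $u\in C^2_\beta$, the correction is bounded by $C_\varepsilon\to0$ uniformly for $\xi\in D$. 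Hence $\int\mathcal{L}_\varepsilon u\,\psi\to\int\mathcal{L}u\,\psi\ge\int f\psi$, which gives the supersolution inequality. The subsolution case is identical.

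The main obstacle is the uniformity in the third step in the singular range $1<p\le\frac{2}{2-s}$. The set $D$ contains points with nonzero gradient, where \cref{lem-pv1} applies only on compact sets avoiding $N_u$, and points near $\xi_0$ with $d_u(\xi)<\varepsilon$, where \cref{lem-pv2} applies. I would cover $D$ by $\{d_u<\varepsilon\}$, using \cref{lem-pv2}, and $\{d_u\ge\varepsilon\}$. For the latter I would rerun the estimate \eqref{eq:est2} with the lower bound $|\nabla_\hn u|\gtrsim d_u^{\beta-1}$, obtaining a bound of order $\varepsilon^{\beta(p-1)-sp}$ uniformly in $\xi$. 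The $C^2_\beta$ norm makes these constants uniform.
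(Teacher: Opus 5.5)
Your proposal is correct and follows essentially the paper's route. You truncate $\mathcal{L}$ at radius $\varepsilon$, multiply by the nonnegative test function, symmetrize using the symmetry of $K$ (and of the gauge under inversion), and let $\varepsilon\to0$ by dominated convergence, controlling integrability near the diagonal (you use the local Lipschitz bound, the paper uses H\"older with the $W^{s,p}$ seminorm) and using tail bounds away from it.

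The one difference is in how the uniform convergence of the truncated operators on the support of the test function is justified. The paper simply appeals to \cref{lem-cont1}, whereas you derive it directly from the uniform principal value bounds of \cref{lem-pv1} and \cref{lem-pv2}, together with your split into $\{d_u<\varepsilon\}$ and $\{d_u\ge\varepsilon\}$; yours is the more careful justification. Both arguments tacitly require $\nabla_\hn u\neq0$ on $\operatorname{supp}\psi$ in the case $1<p\le\frac{2}{2-s}$, $\nabla_\hn u(\xi_0)\neq0$, where no $C^2_\beta$ control is assumed.
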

 \begin{proof}
 We prove the lemma for supersolutions. The case of subsolutions follows after necessary modifications. As $u \in C^2(B_r(\xi_0))$ so $u \in W^{s,p}_{\textup{loc}}(B_r(\xi_0))$. Let $\phi \in C_0^{\infty}(B_r(\xi_0))$ and $\phi \geq 0$ be a test function. As $\mathcal{L} u \geq f$, by definition of $\mathcal{L}$, we note that,
\begin{align}\label{eq:u-est1}
 2\displaystyle\int_{\mathbb{H}^N \setminus B_{\varepsilon}(\xi)} |u(\xi)-u(\eta)|^{p-2} (u(\xi)-u(\eta)) K(\xi,\eta) d\eta \geq f(\xi)-\delta_{\varepsilon}(\xi),
\end{align}
for all $\xi \in \textup{supp}(\phi)$ and for every $\varepsilon>0$, where $\delta_\varepsilon(\xi) \to 0$ uniformly as $\varepsilon \to 0$, by the uniform continuity of $\mathcal{L}$ on the compact support of $
\phi$ as proved in \cref{lem-cont1}. Multiplying (\ref{eq:u-est1}) by the test function $\phi$ and integrating over $\mathbb{H}^N$ we obtain,
\begin{align}\label{eq:u-est2}
&2\int_{\mathbb{H}^N} \int_{\mathbb{H}^N} \Big( 1-\mathbbm{1}_{B(\xi,\varepsilon)}(\eta)\Big)|u(\xi)-u(\eta)|^{p-2}(u(\xi)-u(\eta))\phi(\xi) K(\xi,\eta)\,d\xi\,d\eta \nonumber\\
&\qquad\geq \int_{\mathbb{H}^N} f(\xi)\phi(\xi)d\xi-\int_{\mathbb{H}^N} \delta_\varepsilon(\xi)\phi(\xi)d\xi.
\end{align}
Interchanging the roles of $\xi$ and $\eta$ on L.H.S. of (\ref{eq:u-est2}) and using the symmetry of $K$ we get,
\begin{align}\label{eq:u-est3}
&2\int_{\mathbb{H}^N} \int_{\mathbb{H}^N} \Big( 1-\mathbbm{1}_{B(\eta,\varepsilon)}(\xi)\Big)|u(\xi)-u(\eta)|^{p-2}(u(\eta)-u(\xi))\phi(\eta) K(\xi,\eta)\,d\xi\,d\eta \nonumber\\
&\qquad\geq \int_{\mathbb{H}^N} f(\xi)\phi(\xi)d\xi-\int_{\mathbb{H}^N} \delta_\varepsilon(\xi)\phi(\xi)d\xi.
\end{align}
Adding (\ref{eq:u-est2}) and (\ref{eq:u-est3}) and changing the order of integration on the RHS of (\ref{eq:u-est2}), we note that for $\varepsilon>0$,
\begin{align*}
&\int_{\mathbb{H}^N} \int_{\mathbb{H}^N \setminus B(\eta,\varepsilon)} |u(\xi)-u(\eta)|^{p-2} ( u(\xi)-u(\eta)) (\phi(\xi)-\phi(\eta)) K(\xi,\eta) d\xi\,d\eta \nonumber\\
&\qquad\geq \int_{\mathbb{H}^N} f(\xi)\phi(\xi)d\xi-\|\delta_\varepsilon \phi\|_{L^1(\mathbb{H}^N)}.
\end{align*}
Now, we would like to pass to the limit in this inequality as $\eps\to 0$ by dominated convergence theorem which requires us to bound the integral above as we do below.
Let $\textup{supp}(\phi) \subset B_\rho \Subset B(\xi_0,r)$. By using H\"older inequality, we notice that,
\begin{align*}
&\int_{\mathbb{H}^N} \int_{\mathbb{H}^N} |u(\xi)-u(\eta)|^{p-2} ( u(\xi)-u(\eta)) (\phi(\xi)-\phi(\eta)) K(\xi,\eta) d\xi\,d\eta\nonumber\\
&\qquad \leq \int_{\mathbb{H}^N} \int_{\mathbb{H}^N} |u(\xi)-u(\eta)|^{p-1} (\phi(\xi)-\phi(\eta)) K(\xi,\eta) d\xi\,d\eta \nonumber\\
&\qquad \leq c \int_{B_\rho}\int_{B_\rho} \frac{|u(\xi)-u(\eta)|^{p-1}(\phi(\xi)-\phi(\eta))}{| \eta^{-1}\circ\xi |_{\mathbb{H}^N}^{Q+sp}} d\xi d\eta\\
&\qquad\qquad\qquad+c\int_{\mathbb{H}^N \setminus B_\rho}\int_{\textup{supp}(\phi)} \frac{|u(\xi)-u(\eta)|^{p-1}(\phi(\xi))}{| \eta^{-1} \circ\xi|_{\mathbb{H}^N}^{Q+sp}} d\xi d\eta\nonumber\\
 &\qquad \leq c \left( \int_{B_{\rho}} \int_{B_{\rho}} \frac{| u(\xi)-u(\eta)|^p}{| \eta^{-1}\circ \xi|_{\mathbb{H}^N}^{Q+sp}} d\xi d\eta  \right)^{\frac{p-1}{p}}
 \left(\int_{B_{\rho}} \int_{B_{\rho}} \frac{|\phi(\xi)-\phi(\eta)|^p}{|  \eta^{-1}\circ\xi|_{\mathbb{H}^N}^{Q+sp}} d\xi d\eta  \right)^{\frac{1}{p}}\nonumber\\
 &\qquad \qquad \qquad+c \int_{\mathbb{H}^N \setminus B_\rho} \int_{\textup{supp}(\phi)} \frac{(|u(\xi)|^{p-1}+|u(\eta)|^{p-1})\phi(\xi)}{|  \eta^{-1}\circ\xi|_{\mathbb{H}^N}^{Q+sp}}d\xi \,d\eta \nonumber\\
 &\qquad \leq c \|u\|_{W^{s,p}(B_\rho)}^{p-1}\|\phi\|_{W^{s,p}(B_\rho)}+c\|\phi\|_{L^1(B_\rho)}\text{ Tail }(u;\eta,d)^{p-1}<\infty
\end{align*}
where $\eta \in \textup{supp}(\phi)$ and $d:=\text{ dist }(\eta, \partial B_{\rho})$. Letting $\eps \to 0$,  by dominated convergence theorem, we see that,
\begin{align}\label{eq:u-est4}
&\int_{\mathbb{H}^N} \int_{\mathbb{H}^N} |u(\xi)-u(\eta)|^{p-2} ( u(\xi)-u(\eta)) (\phi(\xi)-\phi(\eta)) K(\xi,\eta) d\xi\,d\eta\nonumber\\
&\qquad\geq \int_{\mathbb{H}^N} f(\xi)\phi(\xi)d\xi.
\end{align}
This finishes the proof.
\end{proof}

    \section{Comparison Principle}\label{sec:comp}
A comparison principle holds for the weak solutions of the fractional $p$-Laplace equations on the Heisenberg group. The proof is similar to the one in the Euclidean case (see \cite[Lemma 9]{Lind2014}), however we prove it here for completeness.
\begin{theorem}\label{thm:comp} (Comparison principle)
    Let $u$ and $v$ be two continuous functions belonging to $W_0^{s,p}(\hn)$ and let $\Omega\subset\hn$ be an open set. If $u\geq v$ in $\hn\setminus\Omega$ and
    \begin{align*}
        &\int_\hn\int_\hn |u(\xi)-u(\eta)|^{p-2}(u(\xi)-u(\eta))(\phi(\xi)-\phi(\eta))K(\xi,\eta)\,d\xi\,d\eta\\
        &\qquad\geq \int_\hn\int_\hn |v(\xi)-v(\eta)|^{p-2}(v(\xi)-v(\eta))(\phi(\xi)-\phi(\eta))K(\xi,\eta)\,d\xi\,d\eta
    \end{align*} for all non-negative $\phi\in C_0(\Omega)$. Then, $u\geq v$ in $\Omega$.
\end{theorem}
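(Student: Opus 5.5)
We test with $\phi=(v-u)_+$, the standard choice from the Euclidean argument of \cite[Lemma 9]{Lind2014}. Since $u,v\in W^{s,p}_0(\hn)$ are continuous and $u\geq v$ on $\hn\setminus\Omega$, the function $\phi$ is non-negative, continuous, belongs to $W^{s,p}(\hn)$, and vanishes outside $\Omega$. The hypothesis is only stated for $\phi\in C_0(\Omega)$, so the first step is to make $(v-u)_+$ admissible. We replace it by $\phi_\delta=(v-u-\delta)_+$ for $\delta>0$. The support of $\phi_\delta$ is contained in the closed set $\{v-u\geq\delta\}$, which stays away from $\hn\setminus\Omega$ because $v-u\le 0$ there and $v-u$ is continuous. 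If needed, we also approximate $\phi_\delta$ in $W^{s,p}$ by non-negative functions in $C_0(\Omega)$, using Heisenberg mollification and cut-offs. Passing to the limit in the inequality is justified because the map $w\mapsto |w(\xi)-w(\eta)|^{p-2}(w(\xi)-w(\eta))K^{1/p'}$ sends $W^{s,p}$ into $L^{p'}(\hn\times\hn)$, by the kernel bounds \eqref{eq:Kernel}. Finally we let $\delta\to 0$ by dominated convergence.

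Subtracting the two integrals and writing $g(t)=|t|^{p-2}t$, we get
\begin{align*}
\iint_{\hn\times\hn}\big(g(v(\xi)-v(\eta))-g(u(\xi)-u(\eta))\big)\big(\phi(\xi)-\phi(\eta)\big)K(\xi,\eta)\,d\xi\,d\eta\le 0 .
\end{align*}
Set $a=v(\xi)-v(\eta)$, $b=u(\xi)-u(\eta)$ and $w=v-u$, so that $a-b=w(\xi)-w(\eta)$ and $\phi=w_+$. By \cref{lem:algebraic}(3),
\[
g(a)-g(b)=(p-1)(a-b)\int_0^1|ta+(1-t)b|^{p-2}\,dt .
\]
The weight $\int_0^1|ta+(1-t)b|^{p-2}\,dt$ is non-negative, and it is strictly positive unless $a=b=0$. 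The elementary inequality $(w(\xi)-w(\eta))(w_+(\xi)-w_+(\eta))\geq (w_+(\xi)-w_+(\eta))^2\ge0$ then makes the integrand pointwise non-negative. Since the integral is $\le 0$, the integrand must vanish a.e. Wherever $a=b=0$ we have $w(\xi)=w(\eta)$ directly. Otherwise the weight is positive and so $w_+(\xi)=w_+(\eta)$. In both cases $w_+(\xi)=w_+(\eta)$ for a.e.\ $(\xi,\eta)$, so $w_+$ is a.e.\ constant on $\hn$.

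Because $w_+\in W^{s,p}_0(\hn)\subset L^p(\hn)$, and Lebesgue measure on $\hn$ has infinite total mass, that constant must be $0$. (Alternatively, if $\Omega^c$ has positive measure, $w_+=0$ there.) By continuity, $v\le u$ everywhere in $\Omega$.

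The main obstacle is the admissibility and approximation step. The hypothesis is phrased only for $C_0(\Omega)$ test functions, and we must make sure the bilinear-type form is continuous along the approximating sequence. This requires density of $C_0(\Omega)$ functions in the relevant closed subspace of $W^{s,p}$ on $\hn$, which we obtain via group mollification, noting that the left-invariance of $K$ keeps the estimates uniform. We also need integrability of the products, which follows from Hölder's inequality as in the proof of \cref{lem-cont2}. The pointwise sign argument itself is purely algebraic and does not depend on the Heisenberg structure.
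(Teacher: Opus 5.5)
Your proposal is correct and follows the paper's proof: the same test function $(v-u)_+$, the same integral identity from \cref{lem:algebraic}(3), and the same sign argument forcing $(v-u)_+$ to be a.e.\ constant. The two extra steps you add make the argument more complete. First, you justify that $(v-u)_+$ is an admissible test function via $(v-u-\delta)_+$ and cut-offs, which the paper takes for granted. Second, you show the constant is zero using $(v-u)_+\in L^p(\hn)$, whereas the paper appeals to the exterior condition, and that only works when $\hn\setminus\Omega$ has positive measure.
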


\begin{proof}
    We rewrite the inequality as 
    \begin{align*}
        &\int_\hn\int_\hn \left(|u(\xi)-u(\eta)|^{p-2}(u(\xi)-u(\eta))- |v(\xi)-v(\eta)|^{p-2}(v(\xi)-v(\eta)\right)\\
        &\qquad\qquad(\phi(\xi)-\phi(\eta))K(\xi,\eta)\,d\xi\,d\eta\geq 0
    \end{align*}for all non-negative $\phi\in C_0(\Omega)$. Our aim is to show that the integrand is non-positive for the choice of $\phi=(v-u)_+:=\max\{v-u,0\}$.
    Let $g(t):=|t|^{p-2}t$. We use the identity $g(b)-g(a)=(p-1)(b-a)\int_0^1|a+t(b-a)|^{p-2}\,dt$ to write 
    \begin{align*}
        g(u(\xi)-u(\eta))-g(v(\xi)-v(\eta))=(p-1)\{u(\xi)-v(\xi)-(u(\eta)-v(\eta))\}L(\xi,\eta),
    \end{align*} where
    \begin{align*}
        L(\xi,\eta):=\int_0^1|v(\xi)-v(\eta)+t((u(\xi)-u(\eta))-(v(\xi)-v(\eta)))|^{p-2}\,dt.
    \end{align*}
    Observe that $L(\xi,\eta)\geq 0$ and further, $L(\xi,\eta)=0$ if and only if $u(\xi)=u(\eta)$ and $v(\xi)=v(\eta)$.
    Notice that with $\phi=(v-u)_+$, define $\psi=v-u=(v-u)_+-(v-u)_-$ and hence $\phi = \psi_+$. With this notation, the integrand becomes the factor $(p-1)L(\xi,\eta)K(\xi,\eta)$ multiplied with
    \begin{align*}
        &(\psi(\eta)-\psi(\xi))(\phi(\xi)-\phi(\eta))\\
        &\qquad=(\psi_+(\eta)-\psi_-(\eta)-\psi_+(\xi)+\psi_-(\xi))(\psi_+(\xi)-\psi_+(\eta))\\
        &\qquad=-(\psi_+(\xi)-\psi_+(\eta))^2+(\psi_-(\xi)-\psi_-(\eta))(\psi_+(\xi)-\psi_+(\eta))\\
        &\qquad=-(\psi_+(\xi)-\psi_+(\eta))^2-\psi_-(\xi)\psi_+(\eta)-\psi_-(\eta)\psi_+(\xi),
    \end{align*} where we used $\psi_+(\xi)\psi_-(\xi)=0$.
 It is a necessary condition, therefore, that $\psi_+(\xi)=\psi_+(\eta)$ or $L(\xi,\eta)=0$. The latter condition already implies the former. Therefore, it must hold that
\begin{align*}
    (v(\xi)-u(\xi))_+=(v(\eta)-u(\eta))_+
\end{align*} for almost every $\xi,\eta$. This gives us that $v(\xi) - u(\xi)=C\geq 0$ for some constant $C$ on the set where $v\geq u$. The boundary condition requires that $C=0$. It follows that $u\geq v$ almost everywhere.
\end{proof}

    \section{Proof of \cref{maintheorem1}}\label{sec:thm1}

    In this section, we prove one of our main theorems on weak solutions being viscosity solutions.
    \begin{proof}
        We prove the case of supersolutions. The case of subsolutions follows after necessary modifications. The function $u$ satisfies conditions $(1), (2),$ and $(4)$ of \cref{def-visc}. It remains to show $(3)$ from \cref{def-visc}. Consider a test function $\phi\in C^2(B(\xi_0,r))$ with $B(\xi_0,r)\subset\Omega$ such that $\phi(\xi_0)=u(\xi_0)$, $\phi\leq u$ in $B(\xi_0,r)$ and either $(a)$ or $(b)$ in \cref{def-visc}(3) holds. We need to show that $\mathcal{L}\phi_r(\xi_0)\geq f(\xi_0)$, where
        \begin{align*}
            \phi_r(\xi):=\begin{cases}
                \phi(\xi)&\mbox{ for }\xi\in B(\xi_0,r),\\
                u(\xi)&\mbox{ otherwise}.
            \end{cases}
        \end{align*}
        Instead suppose that $\mathcal{L}\phi_r(\xi_0)< f(\xi_0)$, then by \cref{lem-cont1} and the continuity of $f$, there is $r_1\in (0,r)$ and $\eps>0$ such that $\mathcal{L}\phi_r(\xi_0)< f(\xi_0)-\eps$ in $B(\xi_0,r_1)$. By \cref{lem-bound}, there is $r_2\in (0,r_1)$, $\hat\delta>0$, and a non-negative test function $\chi\in C^2_0(B(\xi_0,\tfrac{r_2}2))$ satisfying $\chi(\xi_0)=1$, $0\leq\chi\leq 1$, such that for $\phi_\delta:=\phi_r+\delta\chi$ we have
        \begin{align*}
            \sup_{B(\xi_0,r_2)}|\mathcal{L}\phi_r-\mathcal{L}\phi_\delta|<\tfrac\eps{2}
        \end{align*} for $\delta\in (0,\hat\delta)$. In the range $1<p\leq \frac{2}{2-s}$, we have $\phi_\delta\in C^2_\beta(B(\xi_0,r_2))$ for some $\beta>\tfrac{sp}{p-1}$. Therefore, $\mathcal{L}\phi_\delta<\mathcal{L}\phi_r + \tfrac\eps{2}<f-\tfrac\eps{2}$ in $B(\xi_0,r_2)$. By \cref{lem-cont2}, we have that $\phi_\delta$ is a weak subsolution in $B(\xi_0,r_2)$. Since $\phi_\delta=\phi_r\leq u$ in $\hn\setminus B(\xi_0,\tfrac{r_2}{2})$, by the comparison principle in \cref{thm:comp}, we have $u\geq \phi_r+\delta\chi$ in $B(\xi_0,\tfrac{r_2}{2})$. In particular, this implies that 
        \begin{align*}
            u(\xi_0)\geq \phi_r(\xi_0)+\delta\chi(\xi_0) = \phi_r(\xi_0)+\delta > \phi_r(\xi_0) = \phi(\xi_0),
        \end{align*} which contradicts $\phi(\xi_0)=u(\xi_0)$, proving the theorem.
    \end{proof} 

    \section{Proof of \cref{maintheorem2}}\label{sec:thm2}

\subsection{Modified infimal convolution}: 
To exploit the left-translation invariance of the kernel, we introduce the following modified infimal convolution and establish its basic properties. These results are adapted from the works in \cite{Wang2005,Wang2007,JuJuu2012,Katz2015}.
\begin{lemma}\label{lem:inf-conv} (Properties of the modified infimal convolution)
Let $u:\mathbb H^N\to\mathbb R$ be bounded and lower semicontinuous, and let
$\varepsilon>0$. Define
\begin{equation}\label{eq:conv}
  u_\varepsilon(\xi):=\inf_{\eta\in\mathbb H^N}\left\{u(\eta)+
  \frac{|\eta\circ\xi^{-1}|_{\mathbb H^N}^{q}}{q\varepsilon^{q-1}}\right\}.  
\end{equation}
Then the following properties hold.

\noindent
(i) There exists $r(\varepsilon)>0$, with $r(\varepsilon)\to0$ as
$\varepsilon\to0$, such that
$$u_\varepsilon(\xi)=\inf_{\eta\in B_\varepsilon(\xi)}\left\{u(\eta)+\frac{|\eta\circ\xi^{-1}|_{\mathbb H^N}^{q}}{q\varepsilon^{q-1}}\right\},
$$
where
$B_\varepsilon(\xi):=\left\{\eta\in\mathbb H^N:|\eta\circ\xi^{-1}|_{\mathbb H^N}<r(\varepsilon)
\right\}
=B(0,r(\varepsilon))\circ\xi.$

\noindent
(ii) The family $u_\varepsilon$ is increasing as $\varepsilon\downarrow0$, and
$$u_\varepsilon(\xi)\to u(\xi) \qquad\text{for every }\xi\in\mathbb H^N .$$

\noindent
(iii) The function $u_\varepsilon$ is locally semiconcave in the Euclidean sense.
In particular, for every compact set $K\Subset\mathbb H^N$, there exists a constant
$C=C(K,\varepsilon,q,\operatorname{osc}u)>0$ such that
$$D^2 u_\varepsilon(\xi)\le C I\qquad\text{for a.e. }\xi\in K.$$

\noindent
(iv) The function $u_\varepsilon$ is locally Lipschitz continuous and twice
differentiable a.e. in the Euclidean sense.

\noindent
(v) The set
$$Y_\varepsilon(\xi):=\left\{\eta\in B_\varepsilon(\xi):u_\varepsilon(\xi)=u(\eta)+\frac{|\eta\circ\xi^{-1}|_{\mathbb H^N}^{q}}
{q\varepsilon^{q-1}}
\right\}$$
is non-empty and closed for every $\xi\in\mathbb H^N$.
\end{lemma}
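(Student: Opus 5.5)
We prove the five properties in the order (i), (v), (ii), (iv), (iii). Throughout we fix $q\ge 2$. We use the right-invariant distance $\delta(\eta,\xi):=|\eta\circ\xi^{-1}|_{\hn}$, which is a genuine metric since the gauge satisfies the triangle inequality with constant $1$ and $|\zeta^{-1}|_\hn=|\zeta|_\hn$. Put $M:=\sup|u|<\infty$.

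\emph{Localization (i) and attainment (v).} Taking $\eta=\xi$ in \eqref{eq:conv} gives $u_\varepsilon(\xi)\le u(\xi)\le M$. Any $\eta$ with $\delta(\eta,\xi)^q/(q\varepsilon^{q-1})>2M$ gives a value $u(\eta)+\delta(\eta,\xi)^q/(q\varepsilon^{q-1})>M$, so it cannot compete. We may therefore restrict to $\delta(\eta,\xi)<r(\varepsilon):=(2Mq\varepsilon^{q-1})^{1/q}$, and $r(\varepsilon)\to0$ because $q>1$. The identity $\{\eta:\delta(\eta,\xi)<r\}=B(0,r)\circ\xi$ is immediate. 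For (v), we take the infimum over the closed ball $\{\delta(\eta,\xi)\le r(\varepsilon)\}$, which is compact. The function being minimized is lower semicontinuous, as $u$ is l.s.c. and $\delta$ is continuous, so the infimum is attained and $Y_\varepsilon(\xi)\neq\emptyset$. Closedness of $Y_\varepsilon(\xi)$ follows from lower semicontinuity: along a sequence $\eta_k\to\eta$ in $Y_\varepsilon(\xi)$, the value at $\eta$ is at most $\liminf$ of the values at $\eta_k$, all of which equal $u_\varepsilon(\xi)$, while the value at $\eta$ is at least $u_\varepsilon(\xi)$ by definition. (Strictly, $Y_\varepsilon$ should be taken in the closed ball, or $r(\varepsilon)$ slightly enlarged.)

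\emph{Monotone convergence (ii).} The penalty $\delta^q/(q\varepsilon^{q-1})$ increases as $\varepsilon\downarrow0$, so $u_\varepsilon$ increases and stays bounded by $u$. Pick $\eta_\varepsilon\in Y_\varepsilon(\xi)$. Then $\delta(\eta_\varepsilon,\xi)<r(\varepsilon)\to0$, and $u_\varepsilon(\xi)\ge u(\eta_\varepsilon)$. Lower semicontinuity gives $\liminf u_\varepsilon(\xi)\ge\liminf u(\eta_\varepsilon)\ge u(\xi)$, which proves the pointwise convergence.

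\emph{Regularity (iv), (iii).} This is the main point, because the penalty is not a Euclidean quadratic. We write $u_\varepsilon(\xi)=\inf_\eta\{u(\eta)+F(\eta\circ\xi^{-1})\}$ with $F(\zeta):=|\zeta|_\hn^q/(q\varepsilon^{q-1})$. Substituting $\zeta=\eta\circ\xi^{-1}$ gives $u_\varepsilon(\xi)=\inf_{|\zeta|_\hn<r(\varepsilon)}\{u(\zeta\circ\xi)+F(\zeta)\}$, but this moves the dependence on $\xi$ into $u$, which is not helpful. The better view is that $u_\varepsilon$ is an infimum over $\eta$ in a bounded set of the functions $\xi\mapsto F(\eta\circ\xi^{-1})+u(\eta)$. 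Since $\eta\circ\xi^{-1}$ is polynomial in $(\eta,\xi)$, we need $F$ to be $C^{2}$ (in fact $C^{1,1}$ suffices) in the Euclidean sense on bounded sets. On $\mathbb R^{2N+1}$ we have $|\zeta|_\hn^q=(|z|^4+t^2)^{q/4}$, which is $C^2$ for $q\ge4$; if a smaller $q$ is wanted, one must check that the Euclidean Hessian stays bounded near $0$. With $q\ge4$, each map $\xi\mapsto F(\eta\circ\xi^{-1})$ has Euclidean $C^2$ norm on a compact $K$ bounded uniformly in $\eta$ in the relevant compact set. That set is $\{\eta:\delta(\eta,\xi)\le r(\varepsilon),\ \xi\in K\}$, and the bound is $C(K,\varepsilon,q,M)$, with $M$ entering through $r(\varepsilon)$. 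An infimum of a family of functions with uniformly bounded Lipschitz constants is Lipschitz. An infimum of functions satisfying $D^2\le CI$ is semiconcave, because $g-\frac C2|\xi|^2$ is then an infimum of concave functions, hence concave. This gives (iii) and local Lipschitz continuity. Alexandrov's theorem, applied to the concave function $u_\varepsilon-\frac C2|\xi|^2$, then gives twice differentiability a.e., completing (iv). The step to watch is the uniform $C^2$ bound on the penalty near $\eta=\xi$, and that is the reason for choosing $q\ge4$, or at least $q\ge2$ with a direct check.
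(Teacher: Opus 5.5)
Your proposal is correct and takes essentially the same route as the paper. You localize via the oscillation bound, use lower semicontinuity for attainment and for $u_\varepsilon\uparrow u$, and for (iii)--(iv) you use that, with $q\ge4$, the maps $\xi\mapsto|\eta\circ\xi^{-1}|_{\mathbb H^N}^q$ have Euclidean Hessians bounded uniformly for $\eta$ in a compact set, so $u_\varepsilon$ is an infimum of uniformly semiconcave functions and Alexandrov's theorem applies.

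Your remark about taking $Y_\varepsilon(\xi)$ in the closed ball (or enlarging $r(\varepsilon)$) actually repairs a small imprecision in the paper's proof of (v). Drop the hedge that $2\le q<4$ might work ``with a direct check'': moving $\xi$ only in the $t$-direction the penalty becomes $|t_\eta-t_\xi|^{q/2}$, which is not semiconcave at $t_\xi=t_\eta$ for such $q$, and taking $u=0$ except $u(0)=-1$ gives a $u_\varepsilon$ that fails to be semiconcave at the origin.
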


\begin{proof}
Let
$$M:=\sup_{\mathbb H^N}u,\qquad m:=\inf_{\mathbb H^N}u. $$
Choose $r(\varepsilon)>0$ so that
$\frac{r(\varepsilon)^q}{q\varepsilon^{q-1}}=M-m.$
Then $r(\varepsilon)\to0$ as $\varepsilon\to0$. If $|\eta\circ\xi^{-1}|_{\mathbb H^N}\ge r(\varepsilon),$
then
\[
u(\eta)+\frac{|\eta\circ\xi^{-1}|_{\mathbb H^N}^{q}}{q\varepsilon^{q-1}}\ge m+\frac{r(\varepsilon)^q}{q\varepsilon^{q-1}}
=M \ge u(\xi) \ge u_\varepsilon(\xi).
\]
Thus the infimum may be restricted to $B_\varepsilon(\xi)$, proving (i). Since
$u_\varepsilon(\xi)\le u(\xi)$, by choosing $\eta=\xi$, we have
$$\limsup_{\varepsilon\to0}u_\varepsilon(\xi)\le u(\xi).$$
Moreover, if $0<\varepsilon<\varepsilon_0$, then
$\frac{1}{\varepsilon^{q-1}}>\frac{1}{\varepsilon_0^{q-1}},$
and hence
$u_\varepsilon(\xi)\ge u_{\varepsilon_0}(\xi).$
Thus $u_\varepsilon$ is increasing as $\varepsilon\downarrow0$. Let
$\eta_\varepsilon\in Y_\varepsilon(\xi)$. By (i),
$$|\eta_\varepsilon\circ\xi^{-1}|_{\mathbb H^N}<r(\varepsilon),$$
and therefore $\eta_\varepsilon\to\xi$ as $\varepsilon\to0$. By lower
semicontinuity of $u$,
\[
\liminf_{\varepsilon\to0}u_\varepsilon(\xi)=\liminf_{\varepsilon\to0}
\left(u(\eta_\varepsilon)+\frac{|\eta_\varepsilon\circ\xi^{-1}|_{\mathbb H^N}^{q}}
{q\varepsilon^{q-1}}\right)
\ge \liminf_{\varepsilon\to0}u(\eta_\varepsilon) \ge u(\xi).
\]
Together with the previous $\limsup$ estimate, this proves (ii).

We next prove semiconcavity. Fix a compact set $K\Subset\mathbb H^N$. By (i), for
$\xi\in K$, the relevant points $\eta$ in the infimum satisfy
$\eta\in B_\varepsilon(\xi).$
Hence $(\eta,\xi)$ ranges in a compact subset of $\mathbb H^N\times\mathbb H^N$. For fixed $\eta$, define
\[
\Psi_\eta(\xi)
:=
u(\eta)
+
\frac{|\eta\circ\xi^{-1}|_{\mathbb H^N}^{q}}
{q\varepsilon^{q-1}}.
\]
Since $q\ge4$, the map $\xi\mapsto |\eta\circ\xi^{-1}|_{\mathbb H^N}^{q}$
is $C^2$ in the Euclidean variables. Therefore there exists a constant
$C=C(K,\varepsilon,q,\operatorname{osc}u)>0$, independent of $\eta$, such that
\[
D^2_\xi\Psi_\eta(\xi)\le C I
\qquad\text{for }\xi\in K,
\] where $D^2_\xi\Psi_\eta$ is the Euclidean Hessian.
Consequently, each function
\[
\xi\mapsto \Psi_\eta(\xi)-C|\xi|_E^2
\]
is Euclidean concave on $K$. Taking the infimum over $\eta$, we obtain that
$u_\varepsilon(\xi)-C|\xi|_E^2$
is Euclidean concave on $K$. Hence $u_\varepsilon$ is Euclidean locally semiconcave, and by Alexandrov's theorem, $u_\varepsilon$ is twice differentiable a.e. and
\[
D^2u_\varepsilon(\xi)\le C I
\qquad\text{for a.e. }\xi\in K.
\]
This proves (iii).

Since locally semiconcave functions are locally Lipschitz and twice differentiable almost everywhere by Alexandrov's theorem, (iv) follows.

Finally, we prove (v). By (i), the infimum is taken over the bounded set $B_\varepsilon(\xi)$. Let $(\eta_j)\subset B_\varepsilon(\xi)$ be a minimizing sequence. Since $B_\varepsilon(\xi)$ is relatively compact, there exists a subsequence, still denoted by $(\eta_j)$, and some $\eta\in \overline{B_\varepsilon(\xi)}$ such that
$\eta_j\to\eta.$
By the lower semicontinuity of $u$ and the continuity of the homogeneous norm,
\[
u_\varepsilon(\xi) \ge u(\eta) +\frac{|\eta\circ\xi^{-1}|_{\mathbb H^N}^{q}}{q\varepsilon^{q-1}}.
\]
The reverse inequality follows from the definition of $u_\varepsilon$. Thus
$\eta\in Y_\varepsilon(\xi)$, so $Y_\varepsilon(\xi)\neq\emptyset$. If $(\eta_j)\subset Y_\varepsilon(\xi)$ and $\eta_j\to\eta$, the same lower
semicontinuity argument gives
\[
u_\varepsilon(\xi) \ge u(\eta) + \frac{|\eta\circ\xi^{-1}|_{\mathbb H^N}^{q}}{q\varepsilon^{q-1}},
\]
while the reverse inequality again follows from the definition. Hence
$\eta\in Y_\varepsilon(\xi)$. Therefore $Y_\varepsilon(\xi)$ is closed.
This completes the proof.
\end{proof}
\begin{remark}
The intrinsic distance is chosen in the form \(|\eta\circ\xi^{-1}|_{\mathbb H^N}\),
which is compatible with the left-translation invariance of the kernel. This modification is essential in the noncommutative setting of the Heisenberg group and differs from the classical Euclidean infimal convolution.
\end{remark}

Now that we have the properties of the inf/sup-convolutions, the proof of \cref{maintheorem2} proceeds as follows. We first identify the equation satisfied by
$u_\varepsilon$ in the viscosity (and pointwise) sense. Next, by exploiting the higher
regularity and the specific properties of $u_\varepsilon$, we pass from the pointwise
formulation to the corresponding weak formulation. Finally, letting $\varepsilon \to 0$,
we obtain the weak problem satisfied by the limit function $u$. We begin by determining
the equation fulfilled by $u_\varepsilon$.

We next show that the modified infimal convolution preserves the viscosity supersolution property.
\begin{lemma}\label{lem:2} (Stability of viscosity supersolutions under infimal convolution)
Let $u:\mathbb{H}^N \to \mathbb{R}$ be bounded and lower semicontinuous and $f\in C(\Omega)$. Then, for $\tfrac{2}{2-s}<p<\infty$, if $u$ is a viscosity supersolution of $\mathcal{L}u=f  \, \in \Omega$, then $u_{\varepsilon}$ is a viscosity supersolution of 
$\mathcal{L}u=f_\eps  \, \in \Omega_{r(\varepsilon)}$,
where $$\Omega_{r(\varepsilon)}:=\Big\{\xi \in \Omega: \text{dist}(\xi, \partial \Omega)>r(\varepsilon) \Big\}.$$ Also, $f_\varepsilon(x)=\inf_{y \in B(\xi,{r_{\varepsilon}})}f(y)$. Furthermore, $\mathcal{L}u_\varepsilon(x) \geq f_\varepsilon(x)$ a.e. $x \in \Omega_{r(\varepsilon)}$.
\end{lemma}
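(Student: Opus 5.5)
The plan is to transfer test functions from $u_\varepsilon$ to $u$ by a right translation, and to use the fact that the kernel is invariant under left translations. The convolution in \eqref{eq:conv} is built from $|\eta\circ\xi^{-1}|_{\mathbb H^N}$ precisely so that the minimiser displacement acts on the right.

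\textbf{Step 1: transfer the test function.} Fix $\xi_0\in\Omega_{r(\varepsilon)}$ and an admissible $\phi\in C^2(B_r(\xi_0))$ with $\phi(\xi_0)=u_\varepsilon(\xi_0)$ and $\phi\le u_\varepsilon$. Since $p>\frac{2}{2-s}$, only condition 3(a) of \cref{def-visc} has to be checked. By \cref{lem:inf-conv}(v) pick $\eta_0\in Y_\varepsilon(\xi_0)$, and set $h:=\eta_0\circ\xi_0^{-1}$, so that $|h|_{\mathbb H^N}<r(\varepsilon)$ and $\eta_0=h\circ\xi_0$. Define
\[
\tilde\phi(\eta):=\phi(h^{-1}\circ\eta)-\frac{|h|_{\mathbb H^N}^q}{q\varepsilon^{q-1}} .
\]
Taking $\eta=h\circ\xi$ in the definition of $u_\varepsilon(\xi)$ gives $u_\varepsilon(\xi)\le u(h\circ\xi)+\frac{|h|^q}{q\varepsilon^{q-1}}$ for every $\xi$. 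Hence $\tilde\phi\le u$ on $\hn$, with equality at $\eta_0$.

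\textbf{Step 2: check admissibility.} The map $\eta\mapsto h^{-1}\circ\eta$ is a left translation, so it carries $B_r(\eta_0)$ onto $B_r(\xi_0)$ isometrically. It also commutes with the left-invariant fields $X_j$. Therefore $\tilde\phi\in C^2(B_r(\eta_0))$, $\tilde\phi\in L^{p-1}_{sp}(\hn)$, and $\nabla_{\hn}\tilde\phi(\eta_0)=\nabla_{\hn}\phi(\xi_0)$. Since $\xi_0\in\Omega_{r(\varepsilon)}$, we have $\eta_0\in\Omega$, and after shrinking $r$ we have $B_r(\eta_0)\subset\Omega$.

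\textbf{Step 3: compare the operators.} The viscosity property of $u$ gives $\mathcal L\tilde\phi_r(\eta_0)\ge f(\eta_0)\ge f_\varepsilon(\xi_0)$. Here $\tilde\phi_r$ equals $\tilde\phi$ in $B_r(\eta_0)$ and $u$ outside. Make the change of variables $\eta=h\circ\zeta$, use $K(h\circ\xi_0,h\circ\zeta)=K(\xi_0,\zeta)$, and note that the constant shift cancels in differences. This shows that $\mathcal L\tilde\phi_r(\eta_0)$ equals the operator applied at $\xi_0$ to the function $w$ with $w=\phi$ in $B_r(\xi_0)$ and $w(\zeta)=u(h\circ\zeta)-\frac{|h|^q}{q\varepsilon^{q-1}}$ outside.

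Outside the ball, $w\ge u_\varepsilon$ pointwise by the inequality in Step 1. Also $w(\xi_0)=\phi(\xi_0)=u_\varepsilon(\xi_0)$. The map $t\mapsto|t|^{p-2}t$ is monotone, so
\[
\mathcal L\phi_r(\xi_0)\ge \mathcal L w(\xi_0)=\mathcal L\tilde\phi_r(\eta_0)\ge f_\varepsilon(\xi_0),
\]
where $\phi_r$ now uses $u_\varepsilon$ outside $B_r(\xi_0)$. The principal-value parts coincide because $\phi$ is the same inside the ball. The remaining viscosity conditions for $u_\varepsilon$ are immediate: $u_\varepsilon$ is Lipschitz and bounded, hence lies in $L^{p-1}_{sp}$.

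\textbf{Step 4: pointwise a.e.\ inequality.} By \cref{lem:inf-conv}(iii)--(iv), at a.e.\ $\xi$ the function $u_\varepsilon$ is twice (Euclidean) differentiable. At such a point, for each $\delta>0$, $u_\varepsilon$ is touched from below near $\xi$ by the quadratic polynomial $P_\delta=$ (second-order expansion) $-\delta|\cdot-\xi|_E^2$, which is smooth. Let $\phi^\delta$ equal $P_\delta$ in $B_\rho(\xi)$ and $u_\varepsilon$ outside, shrinking $\rho=\rho(\delta)$ so that $P_\delta\le u_\varepsilon$ there. Then $\mathcal L\phi^\delta(\xi)\ge f_\varepsilon(\xi)$. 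As $\delta\to0$ with $\rho\to0$, we have $|P_\delta-u_\varepsilon|=o(|\cdot-\xi|_E^2)$, which is $o(|\xi^{-1}\circ\cdot|_{\mathbb H^N}^2)$ near $\xi$. \cref{lem-pv1}, applied via its proof with the algebraic inequalities of \cref{lem:algebraic}, then shows that the near-diagonal contributions vanish and that $\mathcal L\phi^\delta(\xi)\to\mathcal L u_\varepsilon(\xi)$. The far part converges by dominated convergence.

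\textbf{Main obstacle.} I expect Step 4 to be the hardest. It needs a uniform estimate on small balls for a function that is only twice differentiable at the single point $\xi$. Two things must be verified:
\begin{enumerate}
\item Euclidean second-order differentiability yields $|u_\varepsilon(\eta)-l(\eta)|=O(|\xi^{-1}\circ\eta|_{\mathbb H^N}^2)$ for the horizontal affine part $l$. For this I would use that $t$-increments are quadratic in the gauge, and that the $t$-gradient term is absorbed since $\partial_t$ contributes $O(|\cdot|^2_{\mathbb H^N})$.
\item The bound $p>\frac{2}{2-s}$ makes the radial integral $\int_0^\rho r^{p-2+p(1-s)}\,\frac{dr}{r}$ converge even when $\nabla_{\hn}u_\varepsilon(\xi)=0$.
\end{enumerate}
If (1) causes trouble, I would instead use semiconcavity. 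It gives the one-sided bound $u_\varepsilon\le$ paraboloid, and the Fatou-type argument of \cite{Korven2019} then passes to the limit in only the needed direction.
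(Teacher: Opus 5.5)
Your proposal is correct and follows essentially the paper's argument. The transfer $\tilde\phi(\eta)=\phi(h^{-1}\circ\eta)-\frac{|h|^q_{\mathbb H^N}}{q\varepsilon^{q-1}}$ at a minimiser, combined with left-invariance of $K$, is the paper's $\phi_{\hat\zeta}$ step done in one go, and Step 4 is the paper's touch-by-a-quadratic-and-shrink-the-ball argument via \cref{lem-pv1}.

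One sign needs fixing in Step 3: outside the ball, $w(\zeta)=u(h\circ\zeta)+\frac{|h|^q_{\mathbb H^N}}{q\varepsilon^{q-1}}$, with a plus, and this is what gives $w\ge u_\varepsilon$. Otherwise, two of your choices are more careful than the paper: the explicit monotonicity treatment of the exterior data, and using the full Euclidean quadratic minus $\delta|\cdot-\xi|_E^2$, which, unlike a horizontal-only $-\delta I$, also controls the vertical error.
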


\begin{proof}
We prove the result for supersolutions. The proof for subsolutions follows by
reversing the inequalities.

Recall that, with the modified infimal convolution,
$$
u_\varepsilon(\xi)
=
\inf_{\eta\in\mathbb H^N}
\left\{
u(\eta)
+
\frac{|\eta\circ\xi^{-1}|_{\mathbb H^N}^{q}}
{q\varepsilon^{q-1}}
\right\}.
$$
By writing $\eta=\zeta\circ\xi,$
we have
$$\eta\circ \xi^{-1}=\zeta\circ\xi\circ \xi^{-1}=\zeta.$$
Thus, after restricting to \(\zeta\in B(0,r(\varepsilon))\), we write
\begin{gather*}
u_\varepsilon(\xi)
=\inf_{\zeta\in B(0,r(\varepsilon))}
\left\{u(\zeta\circ\xi)+\frac{|\zeta|_{\mathbb H^N}^{q}}{q\varepsilon^{q-1}}\right\}.
\end{gather*}
For each $\zeta\in B(0,r(\varepsilon))$, define
\begin{equation*}
    \phi_\zeta(\xi)
:=
u(\zeta\circ\xi)
+
\frac{|\zeta|_{\mathbb H^N}^{q}}
{q\varepsilon^{q-1}},
\qquad \xi\in\mathbb H^N .
\end{equation*}
We first prove that $\phi_\zeta$ is a viscosity supersolution of
\begin{gather*}
\mathcal{L}v=f_\varepsilon \quad\text{in }\Omega_{r(\varepsilon)},
\end{gather*}
where
$$
f_\varepsilon(\xi)
:=
\inf_{\zeta\in B(0,r(\varepsilon))} f(\zeta\circ\xi).
$$
Let $\xi_0\in\Omega_{r(\varepsilon)}$, and let $\phi\in C^2(B_r(\xi_0))\cap L^{p-1}_{sp}(\mathbb H^N)$
be an admissible test function such that
$$
\phi(\xi_0)=\phi_\zeta(\xi_0),
\qquad
\phi\le \phi_\zeta
\quad\text{in }\mathbb H^N.
$$
Set
\begin{equation*}
    \gamma=\zeta\circ\xi,\qquad \gamma_0=\zeta\circ\xi_0.
\end{equation*}
Since $\xi_0\in\Omega_{r(\varepsilon)}$ and \(\zeta\in B(0,r(\varepsilon))\), we have \(\gamma_0\in\Omega\).
Define
\begin{gather*}
\widetilde\phi(\gamma):=\phi(\zeta^{-1}\circ\gamma)-\frac{|\zeta|_{\mathbb H^N}^{q}}{q\varepsilon^{q-1}}.
\end{gather*}
Then $\widetilde\phi\in C^2$ in a neighbourhood of $\gamma_0$, and $\widetilde\phi\in L^{p-1}_{sp}(\mathbb H^N)$, because left translations preserve the Haar measure and are compatible with the homogeneous structure.
We now verify that $\widetilde\phi$ touches $u$ from below at $\gamma_0$. Indeed,
\begin{gather*}
\widetilde\phi(\gamma_0)=\phi(\zeta^{-1}\circ\gamma_0)-\frac{|\zeta|_{\mathbb H^N}^{q}}{q\varepsilon^{q-1}}
=\phi(\xi_0)-\frac{|\zeta|_{\mathbb H^N}^{q}}{q\varepsilon^{q-1}}.
\end{gather*}
Since \(\phi(\xi_0)=\phi_\zeta(\xi_0)\), we get
$$
\widetilde\phi(\gamma_0)=u(\zeta\circ\xi_0)=u(\gamma_0).$$
Moreover, for any \(\gamma\in\mathbb H^N\), writing
\[
\xi=\zeta^{-1}\circ\gamma,
\]
we obtain
\[
\widetilde\phi(\gamma)=\phi(\xi)-\frac{|\zeta|_{\mathbb H^N}^{q}}{q\varepsilon^{q-1}}\le\phi_\zeta(\xi)
-\frac{|\zeta|_{\mathbb H^N}^{q}}{q\varepsilon^{q-1}}=u(\zeta\circ\xi)=u(\gamma).
\]
Therefore, \(\widetilde\phi\) is an admissible test function for \(u\) at \(\gamma_0\).
Since \(u\) is a viscosity supersolution of \(\mathcal Lu=f\) in \(\Omega\), we have
\[
\mathcal{L}\widetilde\phi(\gamma_0)\ge f(\gamma_0).
\]
We claim that
$\mathcal{L}\widetilde\phi(\gamma_0)=\mathcal L\phi(\xi_0).$
Indeed, using the definition of \(\mathcal L\), the change of variables
$\gamma=\zeta\circ\eta,$
the invariance of Haar measure under left translations, and the left-translation
invariance of $K$, we obtain
\begin{equation*}
    \begin{aligned}
\mathcal{L}\widetilde\phi(\gamma_0)
&=
\operatorname{P.V.}\int_{\mathbb H^N}
|\widetilde\phi(\gamma_0)-\widetilde\phi(\gamma)|^{p-2}
\bigl(\widetilde\phi(\gamma_0)-\widetilde\phi(\gamma)\bigr)
K(\gamma_0,\gamma)\,d\gamma                                      \\
&=
\operatorname{P.V.}\int_{\mathbb H^N}
|\phi(\xi_0)-\phi(\eta)|^{p-2}
\bigl(\phi(\xi_0)-\phi(\eta)\bigr)
K(\zeta\circ\xi_0,\zeta\circ\eta)\,d\eta                         \\
&=
\operatorname{P.V.}\int_{\mathbb H^N}
|\phi(\xi_0)-\phi(\eta)|^{p-2}
\bigl(\phi(\xi_0)-\phi(\eta)\bigr)
K(\xi_0,\eta)\,d\eta                                               \\
&=
\mathcal L\phi(\xi_0).
\end{aligned}
\end{equation*}
Consequently,
$\mathcal{L}\phi(\xi_0)=\mathcal{L}\widetilde\phi(\gamma_0)\ge f(\gamma_0)=f(\zeta\circ\xi_0)\ge f_\varepsilon(\xi_0).$
Hence $\phi_\zeta$ is a viscosity supersolution of $\mathcal{L}v=f_\varepsilon$ in $\Omega_{r(\varepsilon)}$. We now prove that $u_\varepsilon$ itself is a viscosity supersolution. Let $\psi$ be an admissible test function touching $u_\varepsilon$ from below at
$\xi_0\in\Omega_{r(\varepsilon)}$, namely
\begin{gather*}
\psi(\xi_0)=u_\varepsilon(\xi_0),
\qquad
\psi\le u_\varepsilon
\quad\text{in }\mathbb H^N.
\end{gather*}
By the compactness property of the minimizing set in the infimal convolution, there
exists \(\widehat\zeta\in B(0,r(\varepsilon))\) such that
$$u_\varepsilon(\xi_0)=\phi_{\widehat\zeta}(\xi_0).$$
Since $\psi\le u_\varepsilon\le \phi_{\widehat\zeta}\quad\text{in }\mathbb H^N,$
we may use $\psi$ as a test function for the viscosity supersolution $\phi_{\widehat\zeta}$. Therefore
$\mathcal{L}\psi(\xi_0)\ge f_\varepsilon(\xi_0),$ which proves that $u_\varepsilon$ is a viscosity supersolution of
$$\mathcal{L}v=f_\varepsilon\quad\text{in }\Omega_{r(\varepsilon)}.$$
It remains to prove the almost everywhere pointwise inequality. Let
$\xi\in\Omega_{r(\varepsilon)}$ be a point where $u_\varepsilon$ is twice differentiable, and let $r>0$ be such that
$B_r(\xi)\Subset\Omega_{r(\varepsilon)}.$
Since $p>\frac{2}{2-s}$, the principal value is well-defined for $C^2$-test functions.
For $\delta>0$, define the quadratic Heisenberg polynomial
$$P_\delta(\gamma):=u_\varepsilon(\xi)+\langle\nabla_{\mathbb H^N}u_\varepsilon(\xi),z_h\rangle+\partial_t u_\varepsilon(\xi)t_h+
\frac12\left\langle\bigl(D_{\mathbb H^N}^{2,*}u_\varepsilon(\xi)-\delta I\bigr)z_h,z_h\right\rangle,
$$
where
$h=\xi^{-1}\circ\gamma=(z_h,t_h).$
By the second-order Taylor expansion on $\mathbb H^N$, and after possibly taking
$r>0$ smaller, the function
$$P_{\delta,r}(\gamma):=
\begin{cases}
P_\delta(\gamma), & \gamma\in B_r(\xi),\\
u_\varepsilon(\gamma), & \gamma\in\mathbb H^N\setminus B_r(\xi),
\end{cases}
$$
satisfies
$P_{\delta,r}(\xi)=u_\varepsilon(\xi), P_{\delta,r}\le u_\varepsilon \quad\text{in }\mathbb H^N. $
Thus, $P_{\delta,r}$ is an admissible test function for $u_\varepsilon$ at $\xi$. Since $u_\varepsilon$ is a viscosity supersolution, we have
$$\mathcal{L}P_{\delta,r}(\xi)\ge f_\varepsilon(\xi).$$
Because $P_{\delta,r}=u_\varepsilon$ in $\mathbb H^N\setminus B_r(\xi)$, the nonlocal part outside $B_r(\xi)$ coincides
with that of $u_\varepsilon$. Moreover, by Lemma \ref{lem-pv1}, the principal value of the
local part over $B_r(\xi)$ tends to zero as $r\to0$. Hence,
\begin{align*}
&\int_{\mathbb H^N}
|u_\varepsilon(\xi)-u_\varepsilon(\eta)|^{p-2}
\bigl(u_\varepsilon(\xi)-u_\varepsilon(\eta)\bigr)
K(\xi,\eta)\,d\eta \\
&=\int_{\mathbb H^N}
|P_{\delta,r}(\xi)-P_{\delta,r}(\eta)|^{p-2}
\bigl(P_{\delta,r}(\xi)-P_{\delta,r}(\eta)\bigr)
K(\xi,\eta)\,d\eta \\
&\geq \text{P.V.}\int_{\mathbb H^N}
|P_{\delta,r}(\xi)-P_{\delta,r}(\eta)|^{p-2}
\bigl(P_{\delta,r}(\xi)-P_{\delta,r}(\eta)\bigr)
K(\xi,\eta)\,d\eta - o_r(1) \\
&\geq f_\varepsilon(\xi)-o_r(1).
\end{align*}
Passing to the limit as $r\to 0$, we get
$$\mathcal{L}u_\varepsilon(\xi)\ge f_\varepsilon(\xi) \quad\text{for a.e. }\xi\in\Omega_{r(\varepsilon)}.$$
This completes the proof.
\end{proof}

Since the proof of the weak formulation relies on smooth approximations, we first recall the convergence properties of mollification on the Heisenberg group.
\begin{lemma} \cite[Chapter~1, Sections A, B]{FollandStein1982} \label{lem:mollification-gradient}
Let \(U\subset \mathbb H^N\) be open and let
\(v\in L^\infty_{\mathrm{loc}}(U)\) be locally Lipschitz in \(U\). Let
\(\eta\in C_c^\infty(\mathbb H^N)\), \(\eta\ge0\), \(\int_{\mathbb H^N}\eta=1\), and set
\(\eta_\delta(\gamma)=\delta^{-Q}\eta(\Phi_{\frac1\delta}\gamma)\) where $\Phi_\cdot$ is the dilation on $\hn$. For \(\xi\in U_\delta\Subset U\), define
\[
v_\delta(\xi)
=
\int_{\mathbb H^N}\eta_\delta(\gamma)
v (\gamma^{-1} \circ \xi)\,d\gamma .
\]
Then $v_\delta\to v \,\text{locally uniformly in }U.$
Moreover, $ \nabla_{\mathbb H^N}v_\delta(\xi) \to \nabla_{\mathbb H^N}v(\xi) \,\text{for a.e. }\xi\in U.$
\end{lemma}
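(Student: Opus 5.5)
The plan is to treat the two assertions separately. Uniform convergence will follow from uniform continuity. For the gradients, the key point is that the horizontal fields $X_j$ pass through the integral, which reduces the claim to a Lebesgue differentiation statement. Throughout I write $v_\delta$ with the change of variables already made: $v_\delta(\xi)-v(\xi)=\int \eta_\delta(\gamma)\,\big(v(\gamma^{-1}\circ\xi)-v(\xi)\big)\,d\gamma$, using $\int\eta_\delta=1$. Choose $R$ with $\operatorname{supp}\eta\subset B(0,R)$. Then $\operatorname{supp}\eta_\delta\subset B(0,R\delta)$, by homogeneity of $|\cdot|_{\mathbb H^N}$ under $\Phi_\lambda$.

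\emph{Locally uniform convergence.} Fix a compact $K\subset U$. The group law is polynomial and inversion is $\gamma\mapsto-\gamma$, so the map $(\gamma,\xi)\mapsto\gamma^{-1}\circ\xi$ is continuous. Hence for $\xi\in K$ and $|\gamma|_{\mathbb H^N}<R\delta$, with $\delta$ small, the point $\gamma^{-1}\circ\xi$ lies in a fixed compact $K'\subset U$. Moreover its Euclidean distance to $\xi$ is bounded by $C_K(|\gamma|_{\mathbb H^N}+|\gamma|_{\mathbb H^N}^{1/2})$, uniformly in $\xi\in K$. Since $v$ is Lipschitz on $K'$, it is uniformly continuous there. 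Therefore
\[
\sup_K|v_\delta-v|\le\sup_{\xi\in K,\ |\gamma|_{\mathbb H^N}<R\delta}|v(\gamma^{-1}\circ\xi)-v(\xi)|\to0 .
\]

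\emph{Gradients: commuting $X_j$ with the mollification.} By Rademacher's theorem (Euclidean), $v$ is differentiable a.e. Hence $X_jv$ exists a.e. and is in $L^\infty_{\rm loc}(U)$, and it coincides with the distributional derivative, since Lipschitz functions are in $W^{1,\infty}_{\rm loc}$. The fields $X_j$ are left-invariant, and $\xi\mapsto\gamma^{-1}\circ\xi$ is a left translation. So for each fixed $\gamma$, $X_j\big[v(\gamma^{-1}\circ\cdot)\big](\xi)=(X_jv)(\gamma^{-1}\circ\xi)$ at a.e. $\xi$. (This is where the choice of convolution with $v(\gamma^{-1}\circ\xi)$ matters: with the other order the fields would not commute.) Next I would differentiate under the integral sign. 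I would use difference quotients along the flow, $s^{-1}\big(v(\gamma^{-1}\circ\xi\circ\exp(sX_j))-v(\gamma^{-1}\circ\xi)\big)$. These are bounded by the local Lipschitz constant of $v$ times a constant, since $\exp(sX_j)$ moves points a Euclidean distance $O(|s|)$ on compacts. Dominated convergence and Fubini then give, for every $\xi\in U_\delta$,
\[
X_jv_\delta(\xi)=\int\eta_\delta(\gamma)(X_jv)(\gamma^{-1}\circ\xi)\,d\gamma=:(X_jv)_\delta(\xi).
\]
Alternatively, one can argue distributionally: test against $\varphi\in C_c^\infty$ and use $X_j^*=-X_j$ together with bi-invariance of Haar measure.

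\emph{Lebesgue differentiation; the main obstacle.} It remains to show that $g_\delta\to g$ a.e. for $g=X_jv\in L^1_{\rm loc}$. We have
\[
|g_\delta(\xi)-g(\xi)|\le\|\eta\|_\infty\,\delta^{-Q}\int_{|\gamma|_{\mathbb H^N}<R\delta}|g(\gamma^{-1}\circ\xi)-g(\xi)|\,d\gamma .
\]
The subtle point is noncommutativity. The set $\{\gamma^{-1}\circ\xi:|\gamma|_{\mathbb H^N}<R\delta\}$ is not the left ball $B(\xi,R\delta)$. It is, however, the ball of radius $R\delta$ centred at $\xi$ for the right-invariant quasi-distance $d_r(a,b):=|a\circ b^{-1}|_{\mathbb H^N}$, because $(\gamma^{-1}\circ\xi)\circ\xi^{-1}=\gamma^{-1}$ and $|\gamma^{-1}|_{\mathbb H^N}=|\gamma|_{\mathbb H^N}$. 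Lebesgue measure is bi-invariant and scales by $\lambda^Q$ under dilations, so $d_r$-balls of radius $\rho$ have measure $c\rho^Q$ and $(\mathbb H^N,d_r,\mathcal L^{2N+1})$ is a doubling quasi-metric measure space. The Lebesgue differentiation theorem therefore holds for $d_r$-balls, via the Vitali covering / Hardy–Littlewood maximal function argument. Changing variables $\gamma\mapsto\gamma^{-1}\circ\xi$, the right-hand side above becomes $C\rho^{-Q}\int_{B_{d_r}(\xi,\rho)}|g-g(\xi)|$ with $\rho=R\delta$. This tends to $0$ at every $d_r$-Lebesgue point of $g$, hence for a.e. $\xi\in U$. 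This yields $\nabla_{\mathbb H^N}v_\delta\to\nabla_{\mathbb H^N}v$ a.e. I expect this identification of the correct (right-invariant) family of balls to be the main point needing care; the remaining steps are routine.
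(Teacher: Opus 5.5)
Your argument is correct, and it is the standard proof behind the Folland--Stein citation that the paper gives in place of a proof. Left-invariant fields pass onto the right factor of the convolution $\eta_\delta * v$, so $X_j v_\delta=\eta_\delta*(X_jv)$, and a.e.\ convergence then reduces to Lebesgue differentiation. Your observation that the averages are over right-translated balls $B(0,\rho)\circ\xi=\bigl(B(\xi^{-1},\rho)\bigr)^{-1}$, which still have measure $c\rho^{Q}$ and form a doubling family, is exactly the point that needs care here, because the mollifier sits on the left. One harmless slip: on compacts the Euclidean displacement of $\gamma^{-1}\circ\xi$ from $\xi$ is $O(|\gamma|_{\mathbb H^N}+|\gamma|_{\mathbb H^N}^{2})$, so your $|\gamma|_{\mathbb H^N}^{1/2}$ term is a valid upper bound for small $\gamma$ but is not needed.
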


We are now in a position to derive the weak formulation satisfied by the regularized viscosity supersolutions.

\begin{lemma}\label{lem:weak-form-ueps} (From pointwise to weak formulation)
Let \(p>\frac{2}{2-s}\). Let
\(u_\varepsilon\in L^{p-1}_{sp}(\mathbb H^N)\cap L^\infty_{\mathrm{loc}}(\mathbb H^N)\)
be locally Lipschitz and semiconcave in \(\Omega_{r(\varepsilon)}\). Assume that
\[
\mathcal L u_\varepsilon(\xi)\ge f_\varepsilon(\xi)
\quad\text{for a.e. }\xi\in\Omega_{r(\varepsilon)}.
\]
Then, for every non-negative \(\Psi\in C^\infty_0(\Omega_{r(\varepsilon)})\),
\[
\iint_{Q_{\operatorname{supp}\Psi}}
|u_\varepsilon(\xi)-u_\varepsilon(\gamma)|^{p-2}
\bigl(u_\varepsilon(\xi)-u_\varepsilon(\gamma)\bigr)
\bigl(\Psi(\xi)-\Psi(\gamma)\bigr)
K(\xi,\gamma)\,d\gamma d\xi
\ge
\int_{\operatorname{supp}\Psi} f_\varepsilon(\xi)\Psi(\xi)\,d\xi,
\] where \begin{align*}
Q_{\textup{supp}(\Psi)}:= (\mathbb{H}^N \times \mathbb{H}^N) \setminus \Big(\mathbb{H}^N \setminus \textup{supp}(\Psi) ) \times ( \mathbb{H}^N \setminus \textup{supp}(\Psi)\Big).
\end{align*}
\end{lemma}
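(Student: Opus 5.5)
The plan follows the Euclidean argument of Korvenpää--Kuusi--Lindgren and Barrios--Medina, adapted to the group structure. Multiply the pointwise inequality by $\Psi\ge0$ and integrate. To make the principal value manageable, truncate: for $\rho>0$ set
\[
\mathcal L_\rho u_\varepsilon(\xi):=2\int_{\mathbb H^N\setminus B(\xi,\rho)}|u_\varepsilon(\xi)-u_\varepsilon(\gamma)|^{p-2}\bigl(u_\varepsilon(\xi)-u_\varepsilon(\gamma)\bigr)K(\xi,\gamma)\,d\gamma .
\]
The argument has three steps. First, show $\liminf_{\rho\to0}\int\mathcal L_\rho u_\varepsilon\,\Psi\,d\xi\ge\int f_\varepsilon\Psi\,d\xi$. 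Second, symmetrize the truncated double integral exactly as in \cref{lem-cont2}. This uses the symmetry of $K$ and the fact that $\gamma\in B(\xi,\rho)$ iff $\xi\in B(\gamma,\rho)$, since $|\cdot^{-1}\circ\cdot|$ is a metric. It turns $\int\mathcal L_\rho u_\varepsilon\,\Psi$ into the double integral of $g(u_\varepsilon(\xi)-u_\varepsilon(\gamma))(\Psi(\xi)-\Psi(\gamma))K$ over $\{|\gamma^{-1}\circ\xi|_\hn\ge\rho\}$, where $g(t)=|t|^{p-2}t$. Third, let $\rho\to0$ by dominated convergence. Because $u_\varepsilon$ is locally Lipschitz, $|u_\varepsilon(\xi)-u_\varepsilon(\gamma)|^{p-1}|\Psi(\xi)-\Psi(\gamma)|\lesssim|\gamma^{-1}\circ\xi|_\hn^{p}$ near the diagonal. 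Here I use that Euclidean Lipschitz implies $|v(\xi)-v(\gamma)|\lesssim|\gamma^{-1}\circ\xi|_\hn$ locally, since the Euclidean distance is locally dominated by the gauge distance. Against $K\lesssim|\cdot|^{-Q-sp}$ this is integrable since $p>sp$. Far from the diagonal, $u_\varepsilon\in L^{p-1}_{sp}$ controls the tail. The integrand vanishes outside $Q_{\operatorname{supp}\Psi}$, which gives the stated inequality.

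The main obstacle is the first step. Pointwise a.e.\ convergence $\mathcal L_\rho u_\varepsilon\to\mathcal L u_\varepsilon$ holds wherever the principal value exists, but Fatou's lemma needs a one-sided integrable bound that is uniform in $\rho$. Here semiconcavity is essential. By \cref{lem:inf-conv}(iii), $u_\varepsilon(\xi\circ h)-u_\varepsilon(\xi)-(\text{first-order part})\le C|h|_\hn^2$ on $\operatorname{supp}\Psi$. The Euclidean Hessian bound $D^2u_\varepsilon\le CI$, combined with the first-order horizontal/affine structure, should yield an upper bound of this form for the increments along the group; I expect this to require some care, since the $t$-component of $\xi\circ h$ is quadratic in $z_h$. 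I would then split off the affine part using \cref{lem:affine} on $B(\xi,r)\setminus B(\xi,\rho)$. Together with \cref{lem:algebraic}(2) and the angular estimate \cref{lem:sphere}, this bounds the annular integrals from one side, uniformly in $\rho$ and $\xi\in\operatorname{supp}\Psi$, by $C r^{p(1-s)}$ plus $C r^{p-2+p(1-s)}$ terms. That is the computation of \cref{lem-pv1}, valid because $p>\frac2{2-s}$, with only the one-sided Hessian bound needed since the monotonicity of $g$ preserves the inequality direction. This gives $\mathcal L_\rho u_\varepsilon\ge \mathcal L_r u_\varepsilon-Cr^{\alpha}$ for $\rho<r$, with $\alpha>0$.

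If the one-sided bound proves too delicate, the fallback is to mollify $u_\varepsilon$ on the group as in \cref{lem:mollification-gradient}. Mollification preserves the semiconcavity constants up to $O(1)$, because left convolution commutes with the right-invariant structure used in \eqref{eq:conv}. For the smooth mollified functions the symmetrization is exact. One then passes to the limit using uniform convergence and a.e.\ convergence of the horizontal gradients, with the same uniform near-diagonal bound providing domination.

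Finally, $\mathcal L_r u_\varepsilon$ is bounded on $\operatorname{supp}\Psi$ by the Lipschitz and tail bounds, so Fatou's lemma applied to $\mathcal L_\rho u_\varepsilon-\mathcal L_r u_\varepsilon+Cr^\alpha\ge0$ gives $\limsup_{\rho}\int\mathcal L_\rho u_\varepsilon\Psi\le\int\mathcal L u_\varepsilon\Psi$. The reverse inequality is what is needed, and it follows from the two-sided version: on the full-measure set where $u_\varepsilon$ is twice differentiable, the upper bound comes from semiconcavity and the lower bound from the pointwise existence of the limit. I would instead obtain domination directly from the symmetrized form, where the Lipschitz bound already suffices. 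Thus the symmetrization is done at fixed $\rho$ and dominated convergence is applied on both sides.
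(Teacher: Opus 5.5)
Your route is sound and genuinely different from the paper's, but the last paragraph gets Fatou's lemma backwards and then replaces the correct argument with one that fails. Apply Fatou to the non-negative functions $(\mathcal L_\rho u_\varepsilon-\mathcal L_r u_\varepsilon+Cr^\alpha)\Psi$, $\rho<r$. Since the subtracted term $\int(\mathcal L_r u_\varepsilon-Cr^\alpha)\Psi$ is finite, it gives
\[
\liminf_{\rho\to0}\int\mathcal L_\rho u_\varepsilon\,\Psi\,d\xi\;\ge\;\int\liminf_{\rho\to0}\mathcal L_\rho u_\varepsilon\,\Psi\,d\xi=\int\mathcal L u_\varepsilon\,\Psi\,d\xi\;\ge\;\int f_\varepsilon\Psi\,d\xi .
\]
This is exactly Step 1, not a $\limsup\le$ statement. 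So no ``reverse inequality'' or two-sided bound is needed. Semiconcavity would not supply one anyway: it gives only the \emph{lower} bound $\mathcal L_\rho u_\varepsilon\ge\mathcal L_r u_\varepsilon-Cr^\alpha$, not an upper bound.

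The substitute you end with, domination ``directly from the symmetrized form,'' cannot prove Step 1. It controls the double integral over $\{|\gamma^{-1}\circ\xi|_{\mathbb H^N}\ge\rho\}$, so it proves Step 3: $\lim_\rho\int\mathcal L_\rho u_\varepsilon\Psi$ exists and equals the weak form. It says nothing about exchanging $\lim_\rho$ with $\int d\xi$ for the unsymmetrized $\xi\mapsto\mathcal L_\rho u_\varepsilon(\xi)$, and that is the only place the a.e.\ hypothesis $\mathcal L u_\varepsilon\ge f_\varepsilon$ enters. Keep the one-sided Fatou argument of your second paragraph and delete the detour.

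The ``care'' you anticipate there is minor. The map $h\mapsto\xi\circ h-\xi$ is Euclidean-linear in $h$. For a Euclidean supergradient $P$ of $u_\varepsilon$ at $\xi$, one has $P\cdot(\xi\circ h-\xi)=b\cdot z_h+c\,t_h$. Hence $D^2u_\varepsilon\le CI$ gives $u_\varepsilon(\xi\circ h)-u_\varepsilon(\xi)-(b\cdot z_h+c\,t_h)\le C|h|_{\mathbb H^N}^2$ for small $|h|_{\mathbb H^N}$, uniformly for $\xi\in\operatorname{supp}\Psi$. The first-order part is of the form covered by \cref{lem:affine}. For $p<2$ the crude bound $(|\ell|+C|h|_{\mathbb H^N}^2)^{p-2}\le(C|h|_{\mathbb H^N}^2)^{p-2}$ suffices in place of \cref{lem:sphere}.

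The paper proceeds differently, essentially along your fallback. It mollifies $u_\varepsilon$ along the group and uses the integration-by-parts identity for the smooth $u_{\varepsilon,\delta}$. It passes $\delta\to0$ in the double integral using the same Lipschitz/tail domination as your Step 3 (Claim I). It handles $\int\mathcal L u_{\varepsilon,\delta}\Psi$ with a $\delta$-uniform lower bound $\mathcal L u_{\varepsilon,\delta}\ge-C$ from semiconcavity plus Fatou (Claim II). It then needs a pointwise inequality $\liminf_{\delta\to0}\mathcal L u_{\varepsilon,\delta}(\xi)\ge\mathcal L u_\varepsilon(\xi)$ (Claim III).

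Your truncation removes the mollifier entirely, which buys three things:
\begin{enumerate}
\item At fixed $\rho$ every integral converges absolutely, so the symmetrization is a plain Fubini argument; the identity never has to be justified for $C^2$ functions.
\item You need semiconcavity only of $u_\varepsilon$ itself, not uniform semiconcavity of its mollifications.
\item The delicate pointwise comparison of Claim III disappears. The whole passage becomes a single limit $\rho\to0$, handled by one Fatou step and one dominated-convergence step.
\end{enumerate}
What the paper's route buys in exchange is that $\mathcal L$ is only ever evaluated classically on smooth functions. It pays for this with the extra limit in $\delta$.
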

\begin{proof}
For $\varepsilon, \delta>0$, let us consider the function $u_{\varepsilon, \delta}:\mathbb{R}^N \to \mathbb{R}$ be defined as:
\begin{align}\label{eq:u-de}
    u_{\varepsilon, \delta}(\xi)=\begin{cases}
\int_{B_{\delta(0)}}\eta_\delta(\gamma)u_\varepsilon(\gamma^{-1}\circ\xi) \,d\gamma, \, &\xi \in \Omega_{r(\varepsilon)},\\
u_\varepsilon(\xi),\, &\xi \in \mathbb{H}^N \setminus \Omega_{r(\varepsilon)}.
    \end{cases}
\end{align}
with $\eta_\delta(\gamma)=\delta^{-Q}\,\eta\!\left(\Phi_{1/\delta}(\gamma)\right)$,
where $\Phi_r(x,y,t)=(rx,\,ry,\,r^2t)$, $\gamma \in \mathbb{H}^N$ and $\eta$ is the standard mollifier. We note that $\{ u_{\varepsilon, \delta}: \varepsilon,\delta>0\}$ is a family of smooth semi-concave functions and $u_{\varepsilon,\delta} \to u_\varepsilon$ a.e. $\xi \in \mathbb{H}^N$ as $\delta \to 0$.
As $u_{\varepsilon,\delta} \in C^2(\Omega_{r(\varepsilon)}) \cap L_{sp}^{p-1}(\mathbb{H}^N)$, we see that for all $\Psi \in C_0^{\infty}(\Omega_{r(\varepsilon)})$ with $\mathcal{K}=supp(\Psi)$,
\begin{equation*}
  \int \int_{Q_{\mathcal{K}}}|u_{\varepsilon,\delta}(\xi)-u_{\varepsilon,\delta}(\gamma)|^{p-2} (u_{\varepsilon,\delta}(\xi)-u_{\varepsilon,\delta}(\gamma)) ( \Psi(\xi)-\Psi(\gamma)) K(\xi,\gamma)\,d\xi d\gamma=\int_{\mathcal{K}} (\mathcal{L}u_{\varepsilon, \delta})(\xi) \Psi(\xi)\,d\xi.
\end{equation*}
\textbf{\underline{Claim I}}. We have the following:
\begin{align}\label{eq:a}
    &\lim_{\delta \to 0} \int \int_{Q_{\mathcal{K}}}|u_{\varepsilon,\delta}(\xi)-u_{\varepsilon,\delta}(\gamma)|^{p-2} (u_{\varepsilon,\delta}(\xi)-u_{\varepsilon,\delta}(\gamma)) ( \Psi(\xi)-\Psi(\gamma)) K(\xi,\gamma)\,d\xi d\gamma \nonumber\\
    &=\int \int_{Q_{\mathcal{K}}}|u_{\varepsilon,\delta}(\xi)-u_{\varepsilon,\delta}(\gamma)|^{p-2} (u_{\varepsilon}(\xi)-u_{\varepsilon}(\gamma)) ( \Psi(\xi)-\Psi(\gamma)) K(\xi,\gamma)\,d\xi d\gamma.
\end{align}
\textit{Proof of Claim I}. For $\delta>0$, let us define $\mathcal{F}_\delta:Q_{\mathcal{K}} \to \mathbb{R}$ as
\begin{equation*}
    \mathcal{F}_{\delta}(\xi, \gamma):=|u_{\varepsilon,\delta}(\xi)-u_{\varepsilon,\delta}(\gamma)|^{p-2} (u_{\varepsilon,\delta}(\xi)-u_{\varepsilon,\delta}(\gamma)) ( \Psi(\xi)-\Psi(\gamma)) K(\xi,\gamma).
\end{equation*}
Observing symmetry, we note that,
\begin{equation}\label{eq:b}
    \int \int_{Q_{\mathcal{K}}} \mathcal{F}_{\delta}(\xi, \gamma) d\xi d\gamma=2 \int \int_{\mathcal{K} \times (\mathbb{H}^N \setminus \mathcal{K})}\mathcal{F}_{\delta}(\xi, \gamma) d\xi d\gamma+ {\int \int}_{ \mathcal{K} \times \mathcal{K}} \mathcal{F}_{\delta}(\xi, \gamma) d\xi d\gamma.
\end{equation}
Let us denote $r:=dist( \mathcal{K}, \partial \Omega_{r(\varepsilon)})>0$ and $\mathcal{K}_{\frac{r}{2}}:=\{\gamma \in \Omega_{r(\varepsilon)}: dist(\gamma, \mathcal{K}) \leq \frac{r}{2}\}$.

 If $(\xi,\gamma) \in \mathcal{K} \times \mathcal{K}_{\frac{r}{2}}$, by Young's inequality we have,
 \begin{align}\label{eq:c}
|\mathcal{F}_{\delta}(\xi, \gamma)| &\leq \frac{ |u_{\varepsilon, \delta}(\xi)-u_{\varepsilon, \delta}(\gamma)|^{p-1} |\Psi(\xi)-\Psi(\gamma)|}{\dhn{\xi}{\gamma}^{Q+sp}}\nonumber\\
&\leq C \Bigg( \frac{|u_{\varepsilon,\delta}(\xi)-u_{\varepsilon,\delta}(\gamma)|^p}{\dhn{\xi}{\gamma}^{Q+sp}}  + \frac{|\Psi(\xi)-\Psi(\gamma)|^p}{\dhn{\xi}{\gamma}^{Q+sp}}  \Bigg)
 \end{align}
where $C>0$ is independent of $\delta$. As $u_{\varepsilon}$ is locally Lipschitz, with $\delta>0$ small enough, we note that,
\begin{align*}
|u_{\varepsilon, \delta}(\xi)-u_{\varepsilon, \delta}(\gamma)|^{p} & = \left| \int_{B_{\delta}(0)} \Big( u_\varepsilon(\zeta^{-1}\circ\xi)-u_\varepsilon(\zeta^{-1}\circ\gamma)\Big) \eta_\delta(\zeta)d\zeta\right|^p \nonumber\\
&\leq C |\gamma^{-1}\circ\xi|^p,
\end{align*}
where $C>0$ depends on the Lipschitz constant of $u_\varepsilon$ in $\mathcal{K}_{\frac{3r}{4}}$ but independent of $\delta$ as $\| \eta_{\delta} \|_{L^1}=1.$ Hence, from (\ref{eq:c}), we conclude that,
\begin{align}\label{eq:cd}
    |\mathcal{F}_{\delta}(\xi, \gamma)|\leq C|\gamma^{-1}\circ\xi|^{p-Q-sp}\in L^1(\mathcal{K}\times\mathcal{K}_{\frac r2}),
\end{align} with $C$ independent of $\delta$.

In the case, when $|\gamma^{-1}\circ\xi|_\hn\geq \frac r2$, the kernel is not singular, thus using that $u_{\varepsilon,\delta}$ is locally bounded, we have
\begin{equation}\label{eq:d}
    |\mathcal{F}_{\delta}(\xi, \gamma)| \leq C \frac{1+|u_{\varepsilon}(\gamma)|^{p-1}}{|\gamma^{-1}\circ\xi|^{Q+sp}_{\mathbb{H}^N}} \in L^1( \mathcal{K} \times ( \mathbb{H}^N \setminus \mathcal{K}_{\frac{r}{2}} )),
\end{equation}
as $u_\varepsilon \in L^{p-1}_{sp}(\mathbb{H}^N)$, where $C>0$ is independent of $\delta$.  
By (\ref{eq:cd}) and (\ref{eq:d}), we can say that,
\begin{equation*}
    \mathcal{F}_{\delta}(\xi, \gamma) \leq C \mathcal{F}(\xi, \gamma),
\end{equation*}
where $C>0$ is independent of $\delta$ and $\mathcal{F} \in L^1(\mathcal{K} \times \mathbb{H}^N)$. Using Dominated Convergence theorem, from (\ref{eq:b}), Claim I follows.

\textbf{\underline{Claim II}}. We assert the following:
\begin{equation*}
\liminf_{\delta \to 0} \int_{\mathcal{K}} \mathcal{L}u_{\varepsilon, \delta}(\xi)\Psi(\xi)d\xi \geq 
\int_{\mathcal{K}} \liminf_{\delta \to 0} \mathcal{L}u_{\varepsilon, \delta}(\xi)\Psi(\xi)d\xi.
\end{equation*}

\textit{Proof of Claim II}. Let $0<{r}<dist(\mathcal{K}, \partial \Omega_{r(\varepsilon)})$. Then, $B_{{r}}(\xi) \subset \Omega_{r(\varepsilon)}$ and for $\gamma \in \mathbb{H}^N \setminus B_{{r}}(\xi)$, we have,
$|\gamma^{-1}\circ\xi|_{\mathbb{H}^N}> {r}.$ As $u_\varepsilon$ is locally bounded, analogous to the proof of Claim I, 
\begin{align}\label{eq:e}
&\Big|\int_{\mathbb{H}^N \setminus B_{{r}}(\xi)}
|u_{\varepsilon,\delta}(\xi)-u_{\varepsilon,\delta}(\gamma)|^{p-2}
(u_{\varepsilon,\delta}(\xi)-u_{\varepsilon,\delta}(\gamma)) K(\xi,\gamma)
 d\gamma \Big|\nonumber\\
&\leq \qquad C \int_{\mathbb{H}^N \setminus B_{r}(\xi)} \frac{1+|u_\varepsilon(\gamma)|^{p-1}}{|\xi \circ \gamma^{-1}|_{\mathbb{H}^N}^{Q+sp}} \, d\gamma
\end{align}
As $u_\varepsilon \in L^{p-1}_{sp}(\mathbb{H}^N)$, the right hand side of (\ref{eq:e}) is uniformly bounded with respect to the parameter $\delta>0$. 

Define
\begin{equation}\label{eq:f}
    \mathcal{I}_{B_{{r}(\xi)}}:=P.V. \int_{B_{{r}(\xi)}}|u_{\varepsilon,\delta}(\xi)-u_{\varepsilon,\delta}(\gamma)|^{p-2}
(u_{\varepsilon,\delta}(\xi)-u_{\varepsilon,\delta}(\gamma))K(\xi,\gamma) \, d\gamma.
\end{equation}

By definition and Lemma \ref{lem:inf-conv}(iii), we note that the Euclidean Hessian satisfies
\begin{equation}\label{eq:f-}
    D^{2}_{\xi}u_{\varepsilon, \delta} \leq C I \text{ in } \Omega_{r(\varepsilon)},
\end{equation}
where $D^2_\xi$ is the Euclidean Hessian of $u_{\varepsilon,\delta}$ and $C>0$ is independent of $\delta$. Therefore, by \cite[Lemma~3]{BieskeMan2005}, we have that the group Hessian satisfies
\begin{equation}\label{eq:f--}
    \nabla^{2,*}_{\hn}u_{\varepsilon, \delta} = \mathbb{A}D^{2}_{\xi}u_{\varepsilon, \delta} \mathbb{A}^T\overset{\ref{eq:f-}}{\leq} \tilde{C} I \text{ in } \Omega_{r(\varepsilon)},
\end{equation} where
\begin{align*}
    \mathbb{A}:=\begin{bmatrix}
1 & 0 & 0 & \cdots & 0 & 0 & 2y_1 \\
0 & 1 & 0 & \cdots & 0 & 0 & \vdots \\
0 & 0 & 1 & 0 & 0 & 0 & 2y_n \\
\vdots & \ddots & \ddots & 1 & 0 & 0 & -2x_1 \\
\vdots & \ddots & \ddots & \ddots & \ddots & \vdots & \vdots \\
0 & 0 & 0 & 0 & \cdots & 1 & -2x_n \\
0 & 0 & 0 & 0 & \cdots & 0 & 1 
\end{bmatrix} 
\end{align*}

Let \(h_\gamma:=\xi^{-1}\circ\gamma=(z_\gamma,t_\gamma)\). Now, let us recall the Taylor's theorem with an integral remainder~\cite[Prop.20.3.14]{Bonfig2007}.
\begin{align}\label{eq:tayl}
    u_{\varepsilon,\delta}(\gamma) = u_{\varepsilon,\delta}(\xi) + (\nabla_{\mathbb{H}^N} u_{\varepsilon, \delta}(\xi), \partial_t u_{\varepsilon})\cdot h_\gamma + \left\langle \int_0^1 \nabla^{2,*}_\hn u_{\varepsilon,\delta}(\xi\circ(s h_\gamma))(1-s)\,ds\,h_\gamma, h_\gamma\right\rangle.
\end{align}

Let \(L(t)=|t|^{p-2}t\). Let us define $$a:=u_{\varepsilon, \delta}(\xi)-u_{\varepsilon, \delta}(\gamma), b:=(\nabla_{\mathbb{H}^N} u_{\varepsilon, \delta}(\xi), \partial_t u_{\varepsilon, \delta}(\xi)) \cdot (\xi^{-1}\circ\gamma), $$

By Lemma \ref{lem:affine}, the principal value of the affine part vanishes. Hence, by Lemma \ref{lem:algebraic}, we have
\begingroup
\allowdisplaybreaks
\begin{align}\label{eq:h}
\mathcal I_{B_r(\xi)}
&=
\int_{B_r(\xi)}
(L(a)-L(b)) K(\xi,\gamma)
\,d\gamma\nonumber\\
& = (p-1)\int_{B_r(\xi)}(a-b)\left(\int_0^1|ta+(1-t)b|^{p-2}\,dt\right)\,d\gamma\nonumber\\
& \overset{\ref{eq:tayl}}{=} (p-1)\int_{B_r(\xi)}\left\langle -\int_0^1\nabla^{2,*}_\hn u_{\varepsilon,\delta}(\xi\circ(s h_\gamma))(1-s)\,ds\,h_\gamma, h_\gamma\right\rangle\left(\int_0^1|ta+(1-t)b|^{p-2}\,dt\right)\,d\gamma\nonumber\\
& {=} (p-1)\int_{B_r^+(\xi)}\left\langle -\int_0^1\nabla^{2,*}_\hn u_{\varepsilon,\delta}(\xi\circ(s h_\gamma))(1-s)\,ds\,h_\gamma, h_\gamma\right\rangle\left(\int_0^1|ta+(1-t)b|^{p-2}\,dt\right)\,d\gamma,
\end{align}
\endgroup where $B_r^+(\xi):=\{\eta\in B_r(\xi):\nabla^2_\xi u_{\varepsilon,\delta}(\eta)\geq 0\}$.

Since \(u_{\varepsilon,\delta}\) is locally Lipschitz and
\(\nabla_{\mathbb H^N}u_{\varepsilon,\delta}\) is locally bounded, there exists
\(C>0\) such that
$$|a|=|u_{\varepsilon,\delta}(\xi)-u_{\varepsilon,\delta}(\gamma)|\le C|\gamma^{-1}\circ\xi|_{\mathbb H^N},$$ and
$$
|b|
=
\bigl|
\langle\nabla_{\mathbb H^N}u_{\varepsilon,\delta}(\xi),z_\gamma\rangle
\bigr|
\le
C|\gamma^{-1}\circ\xi|_{\mathbb H^N}.
$$
Consequently,
$$ |ta-(1-t)b| \le t|a|+(1-t)|b|\le C|\gamma^{-1}\circ\xi|_{\mathbb H^N},\qquad 0\le t\le 1.$$
\begin{equation}\label{eq:i}
0\le \int_0^1|ta-(1-t)b|^{p-2}\,dt \le C|\gamma^{-1}\circ\xi|_{\mathbb H^N}^{p-2}.
\end{equation}

Using (\ref{eq:i}), we get from (\ref{eq:h}) that,
\begin{equation}\label{eq:j}
    \mathcal{I}_{B_{{r}}(\xi)} \geq -C \int_{B_{{r}}(\xi)} \frac{|\xi^{-1} \circ \gamma|_{\mathbb{H}^N}^p}{|\xi^{-1} \circ \gamma|_{\mathbb{H}^N}^{Q+sp}} \, d\gamma \geq -C,
\end{equation}
for $p \geq 2$, where $C$ is independent of $\delta$. 

If \(\frac{2}{2-s}<p<2\), then Theorem~\ref{thm:taylor} yields
\[
|L(a)-L(b)|
\le
C|a-b|^{p-2},
\]
where \(L(t)=|t|^{p-2}t\). Therefore, by \eqref{eq:h},
\[
\mathcal J_{B_r(\xi)}
\ge
-
C
\int_{B_r(\xi)}
\frac{|\gamma^{-1}\circ\xi|_{\mathbb H^N}^{2(p-1)}}
{|\gamma^{-1}\circ\xi|_{\mathbb H^N}^{Q+sp}}
\,d\gamma .
\]
Using polar coordinates on \(\mathbb H^N\), we deduce
\[
\mathcal J_{B_r(\xi)} \ge
-
C
\int_0^r \rho^{2(p-1)-sp-1}\,d\rho \ge
-C,
\]
since
$2(p-1)-sp>0\quad\Longleftrightarrow\quad p>\frac{2}{2-s}.$
Hence, we have,
\begin{gather}\label{eq:k}
    \mathcal{L}u_{\varepsilon,\delta}(\xi) \geq -C, \, \xi \in \mathcal{K},
\end{gather}
with $C>0$ independent of $\delta$. Finally, applying Fatou's Lemma, we have the proof of Claim II.

\textbf{\underline{Claim III}}. We will prove the following:
\begin{equation*}
    \int_{\mathcal{K}} \liminf_{\delta \to 0} \mathcal{L}u_{\varepsilon, \delta}(\xi) \Psi(\xi)\, d\xi \geq \int_{\mathcal{K}} \mathcal{L}u_\varepsilon(\xi) \Psi(\xi)\, d\xi.
\end{equation*}
\textit{Proof of Claim III}. As in Claim II, let $0<{r}< dist(\mathcal{K}, \partial \Omega_{r(\varepsilon)})$. Then, $B_{{r}}(\xi) \subset \Omega_{r(\varepsilon)}$ and for $\gamma \in \mathbb{H}^N \setminus B_{{r}}(\xi)$, we have, $|\xi \circ \gamma^{-1}|_{\mathbb{H}^N}> {r}$. We note that, as in Claim II,
\begin{align*}
\mathcal{L}u_{\varepsilon, \delta}(\xi)&=\int_{\mathbb{H}^N \setminus B_r(\xi)} {|u_{\varepsilon,\delta}(\xi)-u_{\varepsilon,\delta}(\gamma)|^{p-2}
(u_{\varepsilon,\delta}(\xi)-u_{\varepsilon,\delta}(\gamma))} K(\xi,\gamma)\,d\gamma\nonumber\\
&\qquad+\int_{B_r(\xi)} {|u_{\varepsilon,\delta}(\xi)-u_{\varepsilon,\delta}(\gamma)|^{p-2}
(u_{\varepsilon,\delta}(\xi)-u_{\varepsilon,\delta}(\gamma)) } K(\xi,\gamma)\, d\gamma\nonumber\\
&=\int_{\mathbb{H}^N \setminus B_{{r}}(\xi)}
{|u_{\varepsilon,\delta}(\xi)-u_{\varepsilon,\delta}(\gamma)|^{p-2}
(u_{\varepsilon,\delta}(\xi)-u_{\varepsilon,\delta}(\gamma)) }K(\xi,\gamma)\, d\gamma\nonumber\\
&\qquad+\int_{B_{{r}}(\xi)} L\Big( u_{\varepsilon,\delta}(\xi)-u_{\varepsilon,\delta}(\gamma) \Big)-{L} \Big( -( \nabla_{\mathbb{H}^N} u_{\varepsilon, \delta}(\xi), \partial_t u_{\varepsilon, \delta}(\xi) \Big)
\cdot (\xi^{-1}\circ\gamma)\nonumber\\
&\qquad \times K(\xi,\gamma)\,d\gamma.
\end{align*}
As $u_\varepsilon$ is bounded, so analogous to Claim I, we have, 
\begin{align}\label{eq:l}
 &\int_{\mathbb{H}^N \setminus B_{{r}(\xi)}} {|u_{\varepsilon,\delta}(\xi)-u_{\varepsilon,\delta}(\gamma)|^{p-2}
(u_{\varepsilon,\delta}(\xi)-u_{\varepsilon,\delta}(\gamma)) }K(\xi,\gamma)\, d\gamma\nonumber\\
&\leq C \int_{\mathbb{H}^N \setminus B_r(\xi)} \frac{1+|u_\varepsilon(\gamma)|^{p-1}}{|\xi \circ \gamma^{-1}|_{\mathbb{H}^N}^{Q+sp}} \, d\gamma 
\end{align}
Again, similar to (\ref{eq:j})-(\ref{eq:k}), we get,
\begin{align*}
    &\int_{B_{{r}}(\xi)} {L}\Big( u_{\varepsilon,\delta}(\xi)-u_{\varepsilon,\delta}(\gamma) \Big)-{L} \Big( -( \nabla_{\mathbb{H}^N} u_{\varepsilon, \delta}(\xi), \partial_t u_{\varepsilon, \delta}(\xi) \Big)
\cdot (\xi^{-1}\circ\gamma) K(\xi,\gamma)\,d\gamma\\
&\geq -C \begin{cases}
\int_{B_{{r}}(\xi)} |\xi \circ \gamma^{-1}|_{\mathbb{H}^N}^{p(1-s)-Q}\, d\gamma, \, p \geq 2,\\
\int_{B_{{r}}(\xi)} |\xi \circ \gamma^{-1}|_{\mathbb{H}^N}^{2(p-1)-Q-sp}\, d\gamma, \, \frac{2}{2-s}<p<2.\\
\end{cases}\\
&\geq -C.
\end{align*}
Using this, we have, from Fatou's Lemma,
\begin{gather*}
    \liminf_{\delta \to 0} \mathcal{L}u_{\varepsilon, \delta}(\xi) \geq \mathcal{L}u_{\varepsilon}(\xi).
\end{gather*}
This gives us Claim III. Combining Claim I-III, we have
\begin{equation}\label{eq:9}
\begin{aligned}
\liminf_{\delta\to0}
\int_{K_0}\mathcal L u_{\varepsilon,\delta}(\xi)\Psi(\xi)\,d\xi
&\ge
\int_{K_0}
\liminf_{\delta\to0}\mathcal L u_{\varepsilon,\delta}(\xi)\Psi(\xi)\,d\xi \\
&\ge
\int_{K_0}\mathcal L u_\varepsilon(\xi)\Psi(\xi)\,d\xi .
\end{aligned}
\end{equation}
Since
\(\mathcal L u_\varepsilon(\xi)\ge f_\varepsilon(\xi)\,\text{for a.e. }\xi\in K_0,\)
we obtain
\begin{equation}\label{eq:10}
    \liminf_{\delta\to0}
\int_{K_0}\mathcal L u_{\varepsilon,\delta}(\xi)\Psi(\xi)\,d\xi
\ge
\int_{K_0} f_\varepsilon(\xi)\Psi(\xi)\,d\xi .
\end{equation}
Finally, taking the limit inferior in \eqref{eq:a}, and using \eqref{eq:b} and \eqref{eq:10},
we conclude
\[
\iint_{Q_{K_0}}
|u_\varepsilon(\xi)-u_\varepsilon(\gamma)|^{p-2}
\bigl(u_\varepsilon(\xi)-u_\varepsilon(\gamma)\bigr)
\bigl(\Psi(\xi)-\Psi(\gamma)\bigr)
K(\xi,\gamma)\,d\gamma d\xi
\ge
\int_{K_0}f_\varepsilon(\xi)\Psi(\xi)\,d\xi .
\]
Since \(K_0=\operatorname{supp}\Psi\), this proves the lemma.
\end{proof}

We are now in a position to derive the weak formulation satisfied by the regularized viscosity supersolutions.
\begin{lemma}\label{lem:5} (Uniform Caccioppoli-type estimate)
Let \(u\in L^\infty(\mathbb H^N)\) be a weak supersolution of
\(\mathcal Lu=f\) in \(\Omega\), where \(f\in C(\Omega)\). 
Let \(\phi\in C_c^\infty(\Omega)\), \(0\leq \phi\leq 1\), and put
\(K:=\operatorname{supp}\phi\). Then there exists a constant
\(C>0\), depending only on \(p,s,Q,\lambda,\Lambda,\phi\) and
\(\|f\|_{L^\infty(K)}\), such that
\[
\int_K\int_{\mathbb H^N}
\frac{|u(\xi)-u(\eta)|^p}{|\eta^{-1}\circ\xi|_{\mathbb H^N}^{Q+sp}}
\phi^p(\xi)\,d\eta\,d\xi
\leq
C\left[
\operatorname{osc}(u)^p
\iint_{Q_K}
\frac{|\phi(\xi)-\phi(\eta)|^p}
{|\eta^{-1}\circ\xi|_{\mathbb H^N}^{Q+sp}}
\,d\eta\,d\xi
+\operatorname{osc}(u)
\right],
\]
where
\(\operatorname{osc}(u):=\sup_{\mathbb H^N}u-\inf_{\mathbb H^N}u .\)
\end{lemma}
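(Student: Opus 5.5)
We will prove the estimate by testing the weak supersolution inequality with the standard Caccioppoli test function for bounded supersolutions, adapted from the Euclidean argument in \cite{Korven2019,Medina2021}. Set $M:=\sup_{\mathbb H^N}u$ and $w:=M-u\ge 0$, so that $0\le w\le \operatorname{osc}(u)$. We take
\[
\Psi:=w\,\phi^p=(M-u)\phi^p\ge 0,
\]
which is compactly supported in $K$. Since $u\in W^{s,p}_{\mathrm{loc}}(\Omega)$ is bounded, $\Psi$ belongs to $W^{s,p}_0$ of a neighbourhood of $K$. A density argument (mollify $\Psi$ with the Heisenberg mollifier of \cref{lem:mollification-gradient}, keep nonnegativity, and pass to the limit using the tail bound $u\in L^{p-1}_{sp}$ exactly as in the proof of \cref{lem-cont2}) makes $\Psi$ an admissible test function. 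The weak inequality then reads
\[
\iint |u(\xi)-u(\eta)|^{p-2}(u(\xi)-u(\eta))(\Psi(\xi)-\Psi(\eta))K\,d\eta\,d\xi\ \ge\ \int_K f\Psi\ \ge\ -\|f\|_{L^\infty(K)}\operatorname{osc}(u)\,|K|.
\]
This already produces the term $\operatorname{osc}(u)$ on the right-hand side of the claimed estimate.

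Next we rewrite the left-hand side in terms of $w$. Since $u(\xi)-u(\eta)=w(\eta)-w(\xi)$, it becomes
\[
-\iint |w(\xi)-w(\eta)|^{p-2}(w(\xi)-w(\eta))\bigl(w(\xi)\phi^p(\xi)-w(\eta)\phi^p(\eta)\bigr)K.
\]
Only pairs in $Q_K$ contribute. We split the integral using the decomposition
\[
w(\xi)\phi^p(\xi)-w(\eta)\phi^p(\eta)=(w(\xi)-w(\eta))\,\phi^p(\xi)+w(\eta)\bigl(\phi^p(\xi)-\phi^p(\eta)\bigr),
\]
after symmetrizing so that the cutoff attached to the difference is $\max\{\phi(\xi),\phi(\eta)\}^p$. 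The first piece gives the good term $-\iint |w(\xi)-w(\eta)|^p\phi^p(\xi)K$. For the second piece we use $|\phi^p(\xi)-\phi^p(\eta)|\le p\max\{\phi(\xi),\phi(\eta)\}^{p-1}|\phi(\xi)-\phi(\eta)|$ and $w\le\operatorname{osc}(u)$, followed by Young's inequality with $\epsilon$:
\[
|w(\xi)-w(\eta)|^{p-1}\phi^{p-1}\operatorname{osc}(u)|\phi(\xi)-\phi(\eta)|\le \epsilon|w(\xi)-w(\eta)|^p\phi^p+C_\epsilon\operatorname{osc}(u)^p|\phi(\xi)-\phi(\eta)|^p.
\]
We absorb the $\epsilon$-term into the good term and then use the ellipticity bounds \eqref{eq:Kernel} to compare $K$ with $|\eta^{-1}\circ\xi|^{-Q-sp}$. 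This yields the integral over $K\times\mathbb H^N$ of $|u(\xi)-u(\eta)|^p\phi^p(\xi)$, bounded by $C\operatorname{osc}(u)^p\iint_{Q_K}|\phi(\xi)-\phi(\eta)|^p|\eta^{-1}\circ\xi|^{-Q-sp}$ plus $C\operatorname{osc}(u)$. The constants depend only on $p,\lambda,\Lambda$, and $|K|$ and $\|f\|_{L^\infty(K)}$ enter only through the last term.

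The main obstacle is the symmetrization step: the pairs where $\phi(\xi)<\phi(\eta)$ must be handled so that the good term really controls $\phi^p(\xi)$ on all of $K\times\mathbb H^N$. Following the standard argument of \cite{Kuusi2016}, we will split $Q_K$ into the regions $\{\phi(\xi)\ge\phi(\eta)\}$ and $\{\phi(\xi)<\phi(\eta)\}$ and swap variables in the second region using the symmetry of $K$. Everything is done with the Haar measure, so no commutativity is needed. The admissibility of $\Psi$ is only a technical point, since the tail and local $W^{s,p}$ bounds make the double integral absolutely convergent.
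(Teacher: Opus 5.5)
Your proposal is correct and follows essentially the same route as the paper. Both arguments test with $(M-u)\phi^p$, use the same splitting of $\Psi(\xi)-\Psi(\eta)$, apply Young's inequality with absorption via the symmetry of $K$, and bound the source term by $\|f\|_{L^\infty(K)}|K|\operatorname{osc}(u)$. The differences are minor. You control the cross term with $\max\{\phi(\xi),\phi(\eta)\}^{p-1}$ and a region split, where the paper uses $\phi^{p-1}(\xi)+\phi^{p-1}(\eta)$ and folds the $\phi^p(\eta)$ term back directly by symmetry. You also justify the admissibility of the test function by a density argument, which the paper takes for granted.
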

\begin{proof}
Let \(\phi\in C_c^\infty(\Omega)\), \(0\leq \phi\leq 1\), and put
\(K:=\operatorname{supp}\phi\). Set
\[
M:=\sup_{\mathbb H^N}u,\, m:=\inf_{\mathbb H^N}u,\, \operatorname{osc}(u):=M-m .
\]
Define
\(
\varphi(\xi):=(M-u(\xi))\phi^p(\xi).
\)
Then \(\varphi\geq0\), \(\operatorname{supp}\varphi\subset K\), and
\(\varphi\in W^{s,p}_0(\Omega)\). Hence \(\varphi\) is an admissible test function in the weak supersolution inequality. Therefore,
\[
\int_K f(\xi)(M-u(\xi))\phi^p(\xi)\,d\xi
\leq
\iint_{Q_K}
|u(\xi)-u(\eta)|^{p-2}(u(\xi)-u(\eta))
(\varphi(\xi)-\varphi(\eta))
K(\xi,\eta)\,d\eta\,d\xi .
\]
Using the identity
\[
\varphi(\xi)-\varphi(\eta)
=
-(u(\xi)-u(\eta))\phi^p(\xi)
+
(\phi^p(\xi)-\phi^p(\eta))(M-u(\eta)),
\]
we get
\[
\begin{aligned}
&\int_K\int_{\mathbb H^N}
|u(\xi)-u(\eta)|^p\phi^p(\xi)K(\xi,\eta)\,d\eta\,d\xi\\
&\leq
\iint_{Q_K}
|u(\xi)-u(\eta)|^{p-1}
|\phi^p(\xi)-\phi^p(\eta)| (M-u(\eta))K(\xi,\eta)\,d\eta\,d\xi
-\int_K f(\xi)(M-u(\xi))\phi^p(\xi)\,d\xi .
\end{aligned}
\]
Since
\(0\leq M-u(\eta)\leq \operatorname{osc}(u),\)
and
\[
|\phi^p(\xi)-\phi^p(\eta)|
\leq
C_p|\phi(\xi)-\phi(\eta)|
\bigl(\phi^{p-1}(\xi)+\phi^{p-1}(\eta)\bigr),
\]
Young's inequality gives, for every \(\delta>0\),
\[
\begin{aligned}
\int_K\int_{\mathbb H^N}
|u(\xi)-u(\eta)|^p\phi^p(\xi)K(\xi,\eta)\,d\eta\,d\xi
&\leq
\delta C_p
\iint_{Q_K}
|u(\xi)-u(\eta)|^p
\bigl(\phi^p(\xi)+\phi^p(\eta)\bigr)
K(\xi,\eta)\,d\eta\,d\xi \\
&\quad
+C_p\delta^{1-p}(\operatorname{osc}(u))^p
\iint_{Q_K}
|\phi(\xi)-\phi(\eta)|^pK(\xi,\eta)\,d\eta\,d\xi \\
&\quad
-\int_K f(\xi)(M-u(\xi))\phi^p(\xi)\,d\xi .
\end{aligned}
\]
Since \(\phi\equiv0\) outside \(K\), we have
\[
\iint_{Q_K}|u(\xi)-u(\eta)|^p\phi^p(\xi)K(\xi,\eta)\,d\eta d\xi
=
\int_K\int_{\mathbb H^N}|u(\xi)-u(\eta)|^p\phi^p(\xi)K(\xi,\eta)\,d\eta d\xi.
\]
Moreover, by the symmetry of \(K\),
\[
\iint_{Q_K}|u(\xi)-u(\eta)|^p\phi^p(\eta)K(\xi,\eta)\,d\eta d\xi
=
\int_K\int_{\mathbb H^N}|u(\xi)-u(\eta)|^p\phi^p(\xi)K(\xi,\eta)\,d\eta d\xi.
\]

Choosing \(\delta>0\) sufficiently small, the first term on the right-hand side can be absorbed into the left-hand side. Thus,
\[
\begin{aligned}
\int_K\int_{\mathbb H^N}
|u(\xi)-u(\eta)|^p\phi^p(\xi)K(\xi,\eta)\,d\eta\,d\xi
&\leq
C(\operatorname{osc}(u))^p
\iint_{Q_K}
|\phi(\xi)-\phi(\eta)|^pK(\xi,\eta)\,d\eta\,d\xi \\
&\quad
-\int_K f(\xi)(M-u(\xi))\phi^p(\xi)\,d\xi .
\end{aligned}
\]
Since \(f\in C(\Omega)\) and \(K\Subset\Omega\), we have \(f\in L^\infty(K)\). Therefore,
\[
\left|
\int_K f(\xi)(M-u(\xi))\phi^p(\xi)\,d\xi
\right|
\leq
\|f\|_{L^\infty(K)}\,|K|\,\operatorname{osc}(u).
\]
Hence,
\[
\begin{aligned}
\int_K\int_{\mathbb H^N}
|u(\xi)-u(\eta)|^p\phi^p(\xi)K(\xi,\eta)\,d\eta\,d\xi
&\leq
C(\operatorname{osc}(u))^p
\iint_{Q_K}
|\phi(\xi)-\phi(\eta)|^pK(\xi,\eta)\,d\eta\,d\xi \\
&\quad
+C\,\operatorname{osc}(u).
\end{aligned}
\]
Finally, applying the lower and upper ellipticity bounds on \(K\), we have
\[
\frac{\lambda}{|\eta^{-1}\circ\xi|_{\mathbb H^N}^{Q+sp}}
\leq
K(\xi,\eta)
\leq
\frac{\Lambda}{|\eta^{-1}\circ\xi|_{\mathbb H^N}^{Q+sp}},
\]
and hence, we obtain
\[
\begin{aligned}
\int_K\int_{\mathbb H^N}
\frac{|u(\xi)-u(\eta)|^p}{|\eta^{-1}\circ\xi|_{\mathbb H^N}^{Q+sp}}
\phi^p(\xi)\,d\eta\,d\xi
&\leq
C(\operatorname{osc}(u))^p
\iint_{Q_K}
\frac{|\phi(\xi)-\phi(\eta)|^p}
{|\eta^{-1}\circ\xi|_{\mathbb H^N}^{Q+sp}}
\,d\eta\,d\xi  \\
&\quad
+C\,\operatorname{osc}(u),
\end{aligned}
\]
where \(C>0\) depends only on \(p,s,Q,\lambda,\Lambda,\phi\) and
\(\|f\|_{L^\infty(K)}\), but is independent of \(u\). This proves the lemma.
\end{proof}

We now combine the approximation, weak formulation, and local energy estimates established in the preceding lemmas to prove the equivalence between weak and viscosity solutions.
\subsection{Proof of Theorem \ref{maintheorem2}}: 
We prove the assertion for viscosity supersolutions. The proof for subsolutions is analogous.

Let \(u\in L^\infty(\mathbb H^N)\) be a viscosity supersolution of
\[
\mathcal Lu=f \qquad \text{in }\Omega .
\]
For \(\varepsilon>0\), let \(u_\varepsilon\) be the modified infimal convolution
\[
u_\varepsilon(\xi)
:=
\inf_{\eta\in\mathbb H^N}
\left\{
u(\eta)
+
\frac{|\eta\circ\xi^{-1}|_{\mathbb H^N}^{q}}
{q\varepsilon^{q-1}}
\right\}.
\]
By Lemma \ref{lem:inf-conv}, \(u_\varepsilon\) is a viscosity supersolution of
\[
\mathcal L u_\varepsilon=f_\varepsilon
\qquad\text{in }\Omega_{r(\varepsilon)},
\]
where
\[
f_\varepsilon(\xi)
:=
\inf_{\eta\in B(0,r(\varepsilon))} f(\eta\circ\xi).
\]
Moreover, since \(p>\frac{2}{2-s}\), the principal value is well-defined for the
test functions used in Lemma \ref{lem:inf-conv}, and therefore
\[
\mathcal L u_\varepsilon(\xi)\ge f_\varepsilon(\xi)
\qquad\text{for a.e. }\xi\in\Omega_{r(\varepsilon)}.
\]
Hence, by Lemma \ref{lem:weak-form-ueps}, \(u_\varepsilon\) is a weak supersolution in
\(\Omega_{r(\varepsilon)}\). Thus, for every non-negative
\(\Psi\in C^\infty_0(\Omega_{r(\varepsilon)})\), we have
\begin{equation}\label{eq:A}
\iint_{Q_{\operatorname{supp}\Psi}}
|u_\varepsilon(\xi)-u_\varepsilon(\zeta)|^{p-2}
\bigl(u_\varepsilon(\xi)-u_\varepsilon(\zeta)\bigr)
\bigl(\Psi(\xi)-\Psi(\zeta)\bigr)
K(\xi,\zeta)\,d\zeta d\xi
\ge
\int_{\operatorname{supp}\Psi}
f_\varepsilon(\xi)\Psi(\xi)\,d\xi .
\end{equation}

Let now \(\Phi\in C^\infty_0(\Omega)\), \(\Phi\ge0\). Choose \(\varepsilon>0\) sufficiently
small so that
\[
\operatorname{supp}\Phi\Subset \Omega_{r(\varepsilon)}.
\]
Taking \(\Psi=\Phi\) in \eqref{eq:A}, we obtain
\begin{equation}\label{eq:B}
\iint_{Q_{\operatorname{supp}\Phi}}
|u_\varepsilon(\xi)-u_\varepsilon(\zeta)|^{p-2}
\bigl(u_\varepsilon(\xi)-u_\varepsilon(\zeta)\bigr)
\bigl(\Phi(\xi)-\Phi(\zeta)\bigr)
K(\xi,\zeta)\,d\zeta d\xi
\ge
\int_{\operatorname{supp}\Phi}
f_\varepsilon(\xi)\Phi(\xi)\,d\xi .
\end{equation}
Since \(f\in C(\Omega)\), and \(r(\varepsilon)\to0\), we have
\(f_\varepsilon(\xi)\to f(\xi) \, \text{uniformly on }\operatorname{supp}\Phi.\)
Therefore,
\begin{equation}
    \int_{\operatorname{supp}\Phi}f_\varepsilon(\xi)\Phi(\xi)\,d\xi \to
\int_{\operatorname{supp}\Phi}f(\xi)\Phi(\xi)\,d\xi .
\end{equation}
It remains to pass to the limit in the left-hand side of \eqref{eq:B}. We first record a
uniform energy estimate. Choose \(\widetilde\Phi\in C^\infty_0(\Omega)\), \(0\le
\widetilde\Phi\le1\), such that
\[
\widetilde\Phi\equiv1
\quad\text{on a compact set }K
\quad\text{with}\quad
\operatorname{supp}\Phi\Subset K\Subset\Omega .
\]
For \(\varepsilon>0\) sufficiently small, \(K\Subset\Omega_{r(\varepsilon)}\). Applying
Lemma \ref{lem:5} to \(u_\varepsilon\) with the test function \(\widetilde\Phi\), we obtain
\begin{equation}\label{eq:C}
    \iint_{Q_K}
|u_\varepsilon(\xi)-u_\varepsilon(\zeta)|^p
K(\xi,\zeta)\,d\zeta d\xi
\le C,
\end{equation}
where \(C>0\) is independent of \(\varepsilon\).

Since \(u_\varepsilon\uparrow u\) pointwise a.e. in \(\mathbb H^N\) as
\(\varepsilon\to0\), we have
\[
u_\varepsilon(\xi)-u_\varepsilon(\zeta)
\to
u(\xi)-u(\zeta)
\qquad\text{for a.e. }(\xi,\zeta)\in\mathbb H^N\times\mathbb H^N.
\]
Moreover, by \eqref{eq:C}, the family
\[
|u_\varepsilon(\xi)-u_\varepsilon(\zeta)|^{p-1}
|\Phi(\xi)-\Phi(\zeta)|K(\xi,\zeta)
\]
is uniformly integrable on \(Q_{\operatorname{supp}\Phi}\). Indeed, by H\"older's
inequality,
\begin{equation}\label{eq:D}
\begin{aligned}
&\iint_{Q_{\operatorname{supp}\Phi}}
|u_\varepsilon(\xi)-u_\varepsilon(\zeta)|^{p-1}
|\Phi(\xi)-\Phi(\zeta)|K(\xi,\zeta)\,d\zeta d\xi  \\
&\qquad\le
\left(
\iint_{Q_K}
|u_\varepsilon(\xi)-u_\varepsilon(\zeta)|^p
K(\xi,\zeta)\,d\zeta d\xi
\right)^{\frac{p-1}{p}}
\left(
\iint_{Q_{\operatorname{supp}\Phi}}
|\Phi(\xi)-\Phi(\zeta)|^p
K(\xi,\zeta)\,d\zeta d\xi
\right)^{\frac 1p}
\le C.
\end{aligned}
\end{equation}
Since \(\Phi\in C^\infty_0(\Omega)\), the second integral above is finite. Hence, by
Vitali's convergence theorem,
\begin{equation}\label{eq:E}
\begin{aligned}
&\iint_{Q_{\operatorname{supp}\Phi}}
|u_\varepsilon(\xi)-u_\varepsilon(\zeta)|^{p-2}
\bigl(u_\varepsilon(\xi)-u_\varepsilon(\zeta)\bigr)
\bigl(\Phi(\xi)-\Phi(\zeta)\bigr)
K(\xi,\zeta)\,d\zeta d\xi \\
&\qquad\to
\iint_{Q_{\operatorname{supp}\Phi}}
|u(\xi)-u(\zeta)|^{p-2}
\bigl(u(\xi)-u(\zeta)\bigr)
\bigl(\Phi(\xi)-\Phi(\zeta)\bigr)
K(\xi,\zeta)\,d\zeta d\xi .
\end{aligned}
\end{equation}

Passing to the limit in \eqref{eq:B}, using \eqref{eq:C} and \eqref{eq:E}, we obtain
\[
\iint_{Q_{\operatorname{supp}\Phi}}
|u(\xi)-u(\zeta)|^{p-2}
\bigl(u(\xi)-u(\zeta)\bigr)
\bigl(\Phi(\xi)-\Phi(\zeta)\bigr)
K(\xi,\zeta)\,d\zeta d\xi
\ge
\int_{\operatorname{supp}\Phi}f(\xi)\Phi(\xi)\,d\xi .
\]
Since \(\Phi\in C^\infty_0(\Omega)\), \(\Phi\ge0\), was arbitrary, this proves that
\(u\) is a weak supersolution of
\[
\mathcal Lu=f
\qquad\text{in }\Omega .
\]
The proof for viscosity subsolutions is obtained by applying the previous argument to \(-u\). Therefore, viscosity solutions are weak solutions. This completes the proof.

\end{document}